\documentclass[reqno]{amsart}
\usepackage[utf8]{inputenc}
\usepackage{amssymb, amsmath, amsthm, amsfonts, xstring, graphicx, enumerate, mathtools, mathrsfs}
\usepackage[pdftex]{hyperref}
\usepackage{cite}

\theoremstyle{plain}
\newtheorem{theorem}    {Theorem}[section] 
\newtheorem*{theorem*}    {Theorem}
\newtheorem{lemma}      [theorem]{Lemma}

\theoremstyle{definition}

\theoremstyle{remark}
\newtheorem{remark}              {Remark}

\newtheorem*{question*}   {Question}

\usepackage{thmtools,enumitem}

\usepackage[capitalize]{cleveref}

\declaretheorem[style=plain,name=Theorem,numberwithin=section]{thm}
\declaretheorem[style=plain,name=Lemma,numberlike=thm]{lem}
\declaretheorem[style=plain,name=Proposition,numberlike=thm]{prop}

\declaretheorem[style=plain,name=Corollary,numberlike=thm]{cor}
\declaretheorem[style=plain,name=Definition,numberlike=thm]{defn}


\newlist{thmlist}{enumerate}{1}
\setlist[thmlist]{label=(\roman{thmlisti}),
                  ref=\thethm(\roman{thmlisti}),
                  noitemsep}

\Crefname{thm}{Theorem}{Theorems}
\Crefname{lem}{Lemma}{Lemmas}
\Crefname{listthm}{Theorem}{Theorems}
\Crefname{listlem}{Lemma}{Lemmas}

\addtotheorempostheadhook[thm]{\crefalias{thmlisti}{listthm}}
\addtotheorempostheadhook[lem]{\crefalias{thmlisti}{listlem}}

\newcommand{\F}{\mathbb{F}}
\newcommand{\C}{\mathbb{C}}
\newcommand{\R}{\mathbb{R}}
\newcommand{\Q}{\mathbb{Q}}
\newcommand{\Z}{\mathbb{Z}}
\newcommand{\N}{\mathbb{N}}
\newcommand{\ep}{\varepsilon}
\newcommand{\spanner}[1]{\begingroup\allowdisplaybreaks#1\endgroup}
	\newcommand{\A}{\mathfrak{A}}
	\newcommand{\pitt}[2]{\pi_{E_1\la(#1\ra),E_2\la(#2\ra)}}

	\newcommand{\pit}[1]{\pi_{E\la(#1\ra)}}
	
	\newcommand{\I}{\mathcal I}
	\renewcommand{\H}[2]{H(#1^2-4#2)}
	\renewcommand{\S}{\mathcal S}
	\newcommand{\jp}[1]{J_{#1,p}}
	\renewcommand{\P}{P_{\mathcal F}}
		\newcommand{\cP}{\mathcal P}
	\newcommand{\cF}{\mathcal F}
	\newcommand{\U}{\mathcal U}
	\newcommand{\V}{\mathcal V}
	\newcommand{\congval}{\upsilon}
	\newcommand{\modval}{\omega}
\newcommand{\pw}[1]{\begin{cases}#1\end{cases}}
\newcommand{\abs}[1]{\left\vert#1\right\vert}
\newcommand{\floor}[1]{\la\lfloor #1\ra\rfloor}
\newcommand{\la}{\left}
\newcommand{\ra}{\right}
\newcommand{\set}[1]{\left\{ #1 \right\}}
\renewcommand{\bar}{\overline}
\newcommand{\sm}{\smallsetminus}
\newcommand{\Spec}{\mathrm{Spec}\,}
\newcommand{\norm}[1]{\abs{\!\abs{#1}\!}}
\newcommand{\differ}[1]{\mathrm{d}#1}
\newcommand{\oth}{\text{otherwise}}
\newcommand{\kro}[2]{\la(\frac{#1}{#2}\ra)}
\newcommand{\ceil}[1]{\la\lceil #1\ra\rceil}
\newcommand{\inv}{^{-1}}

\title[Distribution of traces of Frobenius for families of elliptic curves]{On the distribution of traces of Frobenius for families of elliptic curves and the Lang--Trotter conjecture on average}
\author{Nathan Fugleberg}
\author{Nahid Walji}
\address{Department of Mathematics, University of British Columbia, Vancouver, B.C., V6T 1Z2, Canada}
 \email{fugleberg.nathan@gmail.com}
 \email{nwalji@math.ubc.ca}

\date{}

\makeatletter
\@namedef{subjclassname@2020}{%
  \textup{2020} Mathematics Subject Classification}
\makeatother

\begin{document}
\subjclass[2020]{Primary 11G05; Secondary 11G20, 11N45}

\maketitle
\begin{abstract}
    We obtain distribution results for traces of Frobenius for various families of elliptic curves with respect to the Lang--Trotter conjecture, extremal primes, and the central limit theorem.
    This includes some generalisations and bounds related to the work of Sha--Shparlinski on the average Lang-Trotter conjecture for single-parametric families of elliptic curves and the work of various authors on the trace of Frobenius for primes in congruence classes.
    Some results are also obtained for modular forms.
    
\end{abstract}

\section{Introduction}

\subsection{Set-up and earlier results}
Let $E$ be an elliptic curve over $\Q$. For a prime $p$ not dividing the conductor, denote by $a_p (E) := p + 1 - |E(\F_p)|$ the corresponding trace of Frobenius. The distribution of $a_p (E)$ has been greatly studied over many years. One question of interest is to fix a particular value $r$ for the trace and ask about the occurrence of primes for which $a_p (E) = r$. In the case of $r = 0$, any such $p \geq 5$ is called supersingular. Let us assume that $E$ does not have complex multiplication (CM). Then the Lang--Trotter conjecture \cite{lt} predicts that 
\begin{align*}
\pi_E(r,x):=\# \{p \leq x \mid a_p (E) = r\} \sim C_{E,r}\frac{\sqrt{x}}{\log x},
\end{align*}
where $C_{E,r}$ is an explicitly defined constant (depending on the elliptic curve and the trace), which was then refined in \cite{bj}.

Serre \cite{ser2} obtained asymptotic upper bounds on $\pi_E (r,x)$, implying density zero of such primes. Elkies \cite{e} later proved that every rational elliptic curve has infinitely many supersingular primes. The best known upper bound on the occurrence of supersingular primes is $\pi_E (0,x) \ll x^{3/4}$ \cite{e3}. In the case of non-zero traces, we have $\pi_E (r,x) \ll x (\log \log  x)^2(\log x)^{-2}$ \cite{tz}.

In \cite{fm}, Fouvry--Murty showed that the Lang--Trotter conjecture for $r =0$ holds on average for a certain family of elliptic curves, which was extended by David--Pappalardi \cite{dp} (and later improved in \cite{b}) to general $r$: Let $E(a,b)$ denote the elliptic curve with equation $y^2 = x ^3 + ax + b$, for (suitable) $a,b \in \Z$. Then 
\begin{align}
\frac{1}{4AB}\sum_{\substack{|a|\leq A\\ |b|\leq B}} \pi_{E(a,b)} (r,x)
\sim C_r\frac{x^{1/2}}{\log x},\label{avg:dp}
\end{align}
under the condition that $A$ and $B$ have sufficiently fast growth with respect to $x$.

One can also work with elliptic curves over a number field and count, for example, inert or totally split primes associated to a given value for the trace of Frobenius. Similar conjectures and averaging results for these extensions are derived in \cite{cfjkp,dp2,js,w}. Many of these average results rely on \cite{bir} which relates Hurwitz numbers $H(r ^2 -4p)$ to the number of elliptic curves $E/\F_p$ with Frobenius trace $r$.

For a pair of non-CM elliptic curves $(E_1, E_2)$ and two integers $r_1$ and $r_2$ one can extend the heuristics of the Lang--Trotter conjecture to obtain the prediction 
\begin{align*}
\pi_{E_1,E_2}(r_1,r_2,x) := \# \{p \leq x \mid a_p (E_1) = r_1, a_p (E_2) = r_2\}\sim C_{E_1,E_2,r_1,r_2}\log \log  x,
\end{align*}
where $C_{E_1,E_2,r_1,r_2}$ is an explicit constant. A number of averaging results have been obtained, first by Fouvry--Murty \cite{fm2} in the supersingular case and then for any trace, at various levels of generality \cite{adj,ap,dks}.

A question of interest is to obtain average results for other families of elliptic curves. For example, the subset of elliptic curves with non-trivial torsion is too small to affect the averaging results above, and so are of interest to be studied separately. This has been done in \cite{bbij,j}. Averaging results have also been obtained for families of elliptic curves where the coefficients $a$ and $b$ are determined by certain polynomials \cite{ch,cs,s,ss,w11}. 
In this paper, we will derive improvements over bounds given in \cite{ss} which would be optimal (up to a constant) under the Lang-Trotter conjecture.

We will also consider the distribution of extremal primes, which  are the primes $p$ at which $a_p (E) = \pm \left \lfloor 2\sqrt{p} \right \rfloor$. These can be seen as a special case of allowing the values of Frobenius traces to vary according to a specified sequence (as seen in \cite{ss}).
		In \cite{jp}, James--Pollack prove that for elliptic curves $E/\Q$ with CM we have $\pi_E(\left( \pm \left \lfloor 2\sqrt{p} \right \rfloor \right),x) \sim \frac{2}{3\pi}\frac{x^{3/4}}{\log x}$ (where in our notation we have replaced the constant $r$ with a prime-indexed sequence).
In \cite{jttwz} they conjecture that for non-CM $E/\Q$ there exists a constant $C_E$ such that
		\begin{align*}
			\pi_E(\left( \pm \left \lfloor 2\sqrt{p} \right \rfloor \right),x)	&\sim C_E\frac{x^{1/4}}{\log x}.
		\end{align*}
		In \cite{gj} they show that this holds on average.
Here, we will obtain a number of upper bounds relating to the distribution of extremal primes on average, for various families of elliptic curves.

Our final question of interest on the distribution of traces of Frobenius concerns central limit theorems. In \cite{MP20}, Murty--Prabhu obtain a central limit theorem that is an elliptic curve analogue of Theorem 2 of Nagoshi \cite{Na06} for modular forms (see also \cite{CK17, WX17,PS19, BPS20} for central limit theorems in related settings). Let $h$ be a continuous function on $\R$ and denote by $\widetilde{a}_{E}(p) := a_E (p) / \sqrt{p}$ the normalised trace of Frobenius for $E$ at $p$. Then, under the condition that $A,B$ have asymptotic growth such that $\log A / \log x$ and $\log B / \log x \rightarrow \infty$ as $x \rightarrow \infty$, they show
\begin{align*}
\sum_{\substack{|a| \leq A,|b| \leq B \\ \Delta (a,b)\neq 0}} h \left(\frac{\sum_{p \leq x} \widetilde{a}_{E(a,b)}(p)}{\sqrt{\pi(x)}}\right) 
\rightarrow \frac{1}{\sqrt{2 \pi}}\int_{-\infty }^{ \infty}h (t) e ^{-t^2 /2}dt,
\end{align*}
as $x \rightarrow \infty$, where $\pi(x)$ denotes the prime-counting function. In this paper, we will obtain generalisations of this statement over various sets of primes, families of elliptic curves, and for the product of traces of Frobenius for pairs of elliptic curves, as well as an analogue in the modular form setting.

\subsection{Statement of results}

We first establish some notation. Following \cite{ss}, for $f_i(Z),g_i(Z)\in \Z[Z]$, we require
\begin{align*}
	\Delta_i(Z)	&:=-16(4f_i(Z)^3+27g_i(Z)^2)\neq 0,\\
	j_i(Z)		&:=\frac{-1728(4f_i(Z))^3}{\Delta_i(Z)}\in \Q(Z)\sm \Q.
\end{align*}
$E_i (Z)$ will denote the elliptic curve over $\Q(Z)$ represented by the equation 
\begin{align*}
Y^2=X^3+f_i(Z)X+g_i(Z).
\end{align*}
Given some family of sets $\S(T)\subset \Q$ we define
\begin{align*}
R_{\mathcal{S}(T),p}(w) := \# \{u/v\in \mathcal{S}(T):(u,v)=1,p\nmid v,u\equiv vw \pmod p\}.
\end{align*}
In this introduction, unless otherwise stated, we constrain $\mathcal{S}$ to be such that  
\begin{align*}
R_{\S(T),p}(w)=\#\S(T)p^{-1}+O(\#\S(T)p^{-2}+F(T))
\end{align*}
for some suitable $F(T)$, uniformly in $w$.

Lastly, for any integers $\tau,\tau'$ we denote,
\begin{align*}
\pi_{E(Z)}(\tau;x) &= \# \{p\leq x: a_p(E(Z)) =\tau\},\\
\pi_{E(Z),E'(Z')}(\tau,\tau';x)	&= \#\set{p\leq x: a_p(E(Z))  = \tau, a_p(E'(Z')) = \tau'}.
\end{align*}
We now present the main theorems of the paper.

\begin{theorem}\label{prop:set}
Let $\A,\A_1,\A_2$ be prime-indexed sequences and let $\tau,\tau_1,\tau_2$ be integers. 
	(i) If $R_{\S(T),p}(w)\ll \#\S(T)p^{-1}+F(T)$ for some $F(T)$ then for all $0\leq \delta\leq 1/2$ we have
	\begin{align*}
		&\sum_{\substack{t\in \S(T)\\\Delta(t)\neq 0}}\pit{t}(\A;x)\\
			&\qquad\ll	\pw{
									\#\S(T)\frac{x^{1/2}}{\log x}+F(T)\frac{x^{3/2}}{\log x}			&		\A=\tau,\\
									\#\S(T)\frac{x^{1/4}}{\log x}+F(T) \frac{x^{5/4}}{\log x}			&		\A(p)=\pm \floor{2\sqrt{p}},\\
									\#\S(T) x^{1/4+\delta/2}\log_2  x+F(T) x^{5/4+\delta/2}\log_2  x	&		\A(p)=\pm 2\sqrt{p}+O\la(p^{\delta}\ra),\\
									\#\S(T) x^{1/2}\log_2  x+F(T) x^{3/2}\log_2  x				&		\oth.
							}
	\end{align*}
	
	(ii) 	If $R_{\S(T),p}(w)\ll \#\S(T) p^{-1}+F(T)$ for some $F(T)$ then for $0\leq \delta_1,\delta_2\leq 1/2$ we have
			\begin{align*}
				&\sum_{\substack{t_1,t_2\in \S(T)\\\Delta_1(t_1)\neq 0\\ \Delta_2(t_2)\neq 0}}\pitt{t_1}{t_2}(\A_1,\A_2;x)\\
				&\ll \pw{	
							\#\S(T)^2\log_2  x+\#\S(T)F(T)x+F(T)^2x^2
																													&\!\! 		\A_i=\tau_i,\\	
							\#\S(T)^2+\#\S(T)F(T)x^{1/2}\log_2  x+F(T)^2x^{3/2}\log_2  x										
																													&\!\!		\A_i(p)=\pm \floor{2\sqrt p},\\				
							\#\S(T)^2+\Bigl[\#\S(T)F(T)+F^2(T)x\Bigr]x^{\frac{1\!+\!\delta_1\!+\!\delta_2}{2}}\log x\log_2^2  x
																													&\!\!		\A_i(p)=\pm 2\sqrt{p}+O\la(p^{\delta_i}\ra),\\
							\Bigl[\#\S(T)^2\log x\log_2 x+\#\S(T)F(T)x+F(T)^2 x^2\Bigr]\log x\log_2^2  x
																													&\!\!		\oth.
						}
			\end{align*}
\end{theorem}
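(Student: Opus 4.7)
The approach is to swap the order of summation, reduce the parameter sum to a sum over residue classes modulo $p$ via the $R_{\S(T),p}$ hypothesis, and then control the remaining count with Hurwitz class numbers, in the spirit of \cite{bir,ss}. Since $a_p(E(t))$ depends only on $t\bmod p$, writing each $t=u/v$ in lowest terms and grouping by $w\equiv u/v\pmod p$ yields, for part (i),
\begin{align*}
\sum_{\substack{t\in\S(T)\\ \Delta(t)\neq 0}}\pit{t}(\A;x)
 = \sum_{p\leq x}\sum_{\substack{w\in\F_p\\ a_p(E(w))=\A(p)}}\! R_{\S(T),p}(w)
 \ll \sum_{p\leq x}\la(\tfrac{\#\S(T)}{p}+F(T)\ra) N_p(\A(p)),
\end{align*}
where $N_p(r):=\#\set{w\in\F_p : a_p(E(w))=r}$. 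Because $j(Z)$ is non-constant, the map $w\mapsto j(w)$ has fibres of size bounded by $\deg j$, so $N_p(r)$ is dominated by the number of $j$-invariants of curves over $\F_p$ with trace $r$, and via the Deuring/Birch correspondence \cite{bir} this gives $N_p(r)\ll H(r^2-4p)$.

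The four cases of (i) then follow from standard Hurwitz class number estimates. For $\A=\tau$ one has $H(4p-\tau^2)\ll\sqrt p$, giving $\sum_p H/p\ll x^{1/2}/\log x$ and $\sum_p H\ll x^{3/2}/\log x$. For extremal primes the discriminant collapses to $|4p-\A(p)^2|\ll\sqrt p$, yielding $H\ll p^{1/4}$; the near-extremal case yields $|4p-\A(p)^2|\ll p^{1/2+\delta}$ and hence $H\ll p^{1/4+\delta/2}\log_2 p$; while the ``otherwise'' case relies on $H(n)\ll\sqrt n\,\log_2 n$. Summing over $p\leq x$ produces the four displayed bounds. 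For part (ii), the indicators in $t_1,t_2$ are independent, so after swapping order the inner double sum factors into a product of two single-curve inner sums, each bounded as in (i). Expanding $(\#\S(T)/p+F(T))^2$ as $\#\S(T)^2/p^2+\#\S(T)F(T)/p+F(T)^2$ and multiplying by $H(4p-\A_1(p)^2)H(4p-\A_2(p)^2)$, each of the three resulting prime sums is estimated with the same class-number input; in the extremal and near-extremal regimes the leading prime sum converges or grows only as $\log_2 x$, giving the $\#\S(T)^2$ or $\#\S(T)^2\log_2 x$ terms, while in the generic case the doubled class-number bound contributes $(\log_2 x)^2$ and the non-convergent prime sum adds a further $\log x$.

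\textbf{Main obstacle.} The technical crux is the pointwise reduction $N_p(r)\ll H(r^2-4p)$, which exploits the non-constancy of $j(Z)$ combined with the Deuring/Birch packaging of trace-$r$ isomorphism classes by Hurwitz class numbers. The remaining effort is careful case-splitting on the size of $4p-\A(p)^2$ and tracking how each class-number regime feeds into the sum over primes; the subtlest point is the ``otherwise'' case of (ii), where neither $\A_1$ nor $\A_2$ permits any smallness in the discriminant, so one is forced into the worst-case $\log x\,(\log_2 x)^2$ factor, and one must verify that the $\log_2 p$ in the near-extremal class-number bound propagates through the prime sum without producing further losses.
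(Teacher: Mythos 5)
Your overall architecture matches the paper's: both reduce the parameter sum to $\sum_{p\le x}\norm{R_{\S(T),p}}_\infty\cdot N_p(\A(p))$ by swapping sums, bound $N_p(\A(p))\le(\deg j)\H{\A(p)}{p}$ via the bounded fibres of the non-constant $j$-invariant (this is Lemmas 2.1--2.2 of the paper), and then handle $\sum_p \H{\A(p)}{p}/p$ and $\sum_p \H{\A(p)}{p}$ in four cases. However, the case-splitting step has a genuine gap: you invoke the pointwise class-number bound $\H{\tau}{p}\ll\sqrt p$ (resp.\ $\ll p^{1/4}$ in the extremal case), which is false. Unconditionally the best pointwise estimate is
\[
\H{\A(p)}{p}\ll \sqrt{\abs{\A(p)^2-4p}}\,\log p\,\log_2 p,
\]
coming from $L(1,\chi_d)\ll\log|d|$ together with the divisor sum over $f$, and this is what the paper proves inside Lemma 3.1. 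Feeding the corrected bound into your argument gives only
\[
\sum_{p\le x}\frac{\H{\tau}{p}}{p}\ll\sum_{p\le x}p^{-1/2}\log p\,\log_2 p\ll x^{1/2}\log_2 x,
\]
i.e.\ the weaker ``otherwise'' bound, not the asserted $x^{1/2}/\log x$; the same shortfall occurs in the extremal case ($x^{1/4}\log_2 x$ rather than $x^{1/4}/\log x$) and in the $\A_i=\tau_i$ case of part (ii), where you need $\sum_p \H{\tau_1}{p}\H{\tau_2}{p}/p^2\ll\log_2 x$ but the pointwise bound gives only $\log^2 x\,\log_2^3 x$.

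The point you are missing is that the $\log p$ loss from $L(1,\chi_d)\ll\log|d|$ is not removable pointwise; the sharper bounds in the constant-trace and exactly-extremal cases rely on averaging over primes, where the special values $L(1,\chi_{\tau^2-4p})$ average to $O(1)$. In the paper this is packaged into Lemma 3.1 and Lemma 3.4, which in turn cite the David--Pappalardi asymptotic for $\sum_{p\le x}\H{\tau}{p}/p$, the Giberson--James asymptotic for the extremal case, and the Akbary--Parks asymptotic for $\sum_p\H{\tau_1}{p}\H{\tau_2}{p}/p^2$. Your near-extremal and ``otherwise'' cases survive (modulo tracking the omitted $\log p$ factor, which you appear to account for implicitly since your displayed conclusions agree with the paper's), but the constant-trace and extremal cases genuinely require these averaged inputs and cannot be reached by the pointwise class-number estimate you propose.
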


\begin{theorem} 
\label{prop:setconstlin}\ \\ 
(i) Suppose $\deg j=1$. 
For all $c>0$ we have
\begin{align*}
\sum_{\substack{t\in \S(T)\\\Delta(t)\neq 0}}\pit{t}(\tau;x)=\#\S(T) C_{\tau} \frac{x^{1/2}}{\log x} +O\la( \#\S(T)\frac{x^{1/2}}{\log^c x}+F(T)\frac{x^{3/2}}{\log x}\ra),
\end{align*}
for a constant $C_\tau$.

\label{prop:set2constlin}
(ii) Suppose $\deg j_1=\deg j_2=1$. 
We have
\begin{align*}
	\sum_{\substack{t_1,t_2\in \S(T)\\\Delta_1(t_1)\neq 0 \\ \Delta_2(t_2) \neq 0}}\pitt{t_1}{t_2}(\tau_1,\tau_2;x)
	 =\#\S(T)^2 C_{\tau_1,\tau_2}\log_2  x	+	O\la(	\#\S(T)	+	F(T) x	\ra)^2,
\end{align*}
for a suitable constant $C_{\tau_1,\tau_2}$.
\end{theorem}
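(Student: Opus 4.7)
The plan mirrors the strategy of \Cref{prop:set}: exchange summation so the prime sum is outermost, invoke the equidistribution hypothesis on $\S(T)\pmod p$, and then estimate the resulting prime sums. The extra input $\deg j_i=1$ upgrades the upper bounds in \Cref{prop:set} to asymptotics by identifying the main prime sum with a class number average handled by \eqref{avg:dp} and its analogues.

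For (i), swapping sums gives
\[
\sum_{\substack{t\in\S(T)\\\Delta(t)\neq 0}}\pit{t}(\tau;x)=\sum_{p\leq x}\#\set{t\in\S(T):a_p(E(t))=\tau}.
\]
Partitioning the inner count by residue $w\equiv t\pmod p$ and inserting $R_{\S(T),p}(w)=\#\S(T)/p+O(\#\S(T)/p^2+F(T))$ reduces matters to
\[
\#\S(T)\sum_{p\leq x}\frac{M_p(\tau)}{p}+O\la(\#\S(T)\sum_{p\leq x}\frac{M_p(\tau)}{p^2}+F(T)\sum_{p\leq x}M_p(\tau)\ra),
\]
where $M_p(\tau):=\#\set{w\in\F_p:a_p(E(w))=\tau,\ \Delta(w)\not\equiv 0}$. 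Because $\deg j=1$, the map $w\mapsto j(w)$ is a bijection on a cofinite subset of $\F_p$, so $M_p(\tau)$ picks out one distinguished model per $j$-invariant whose Frobenius trace is $\tau$; via Deuring/Birch \cite{bir}, this reduces (up to $O(1)$ from $j=0,1728$ and a factor depending on the twist distribution within the family) to a multiple of $H(\tau^2-4p)$. Summing over $p$ and applying \eqref{avg:dp} (with the error-term refinement of \cite{b}) yields $\#\S(T)C_\tau x^{1/2}/\log x+O(\#\S(T)x^{1/2}/\log^c x)$. The $F(T)$-error is controlled using $H(\tau^2-4p)\ll\sqrt p\log p$, giving $\sum_{p\leq x}M_p(\tau)\ll x^{3/2}/\log x$, while the $\#\S(T)/p^2$ piece is absorbed via $\sum_p p^{-3/2}\log p=O(1)$.

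For (ii), the two trace conditions decouple in $(t_1,t_2)$, so after swapping
\[
\sum_{(t_1,t_2)}\pitt{t_1}{t_2}(\tau_1,\tau_2;x)=\sum_{p\leq x}N_p^{(1)}(\tau_1)\,N_p^{(2)}(\tau_2),
\]
with each $N_p^{(i)}$ expanded as in (i). Multiplying out and summing produces a main term $\#\S(T)^2\sum_p M_p^{(1)}M_p^{(2)}/p^2$ together with mixed errors bounded by $\#\S(T)F(T)\sum_p M_p^{(1)}M_p^{(2)}/p\ll\#\S(T)F(T)x$ and $F(T)^2\sum_p M_p^{(1)}M_p^{(2)}\ll F(T)^2x^2$ via the class number bound. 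The main term is the pair average of class numbers treated in \cite{fm2,adj,dks}, asymptotic to $\#\S(T)^2C_{\tau_1,\tau_2}\log_2 x$, which gives the stated identity.

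The principal obstacle is pinning down the constants $C_\tau$, $C_{\tau_1,\tau_2}$ and obtaining the $\log^{-c}$ saving in (i); both call on the class-number-averaging machinery of \cite{dp,b} in the single-curve case and of \cite{fm2,adj,dks} in the pair case. The remaining work is bookkeeping of error terms and handling the exceptional $j$-invariants $0$ and $1728$ together with primes dividing the discriminants $\Delta_i$, each contributing only $O(1)$ per prime.
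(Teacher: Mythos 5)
Your proposal follows essentially the same route as the paper: swap sums, insert the equidistribution hypothesis for $R_{\S(T),p}$, exploit $\deg j=1$ to turn the residue count into a Hurwitz-number count, and then apply known averages of $H(\tau^2-4p)/p$ (resp.\ $H(\tau_1^2-4p)H(\tau_2^2-4p)/p^2$) over primes, with the error terms controlled by the trivial bound $H(\tau^2-4p)\ll\sqrt p\log p$. This is exactly the paper's argument (\cref{lem:isoperm} plus \cref{lem:hur} for (i), and \cref{lem:hur2} for (ii)). Two small imprecisions worth flagging: first, you cite \eqref{avg:dp} and \cite{b} for the main term, but the correct input is the prime-sum asymptotic $\sum_{p\leq x}H(\tau^2-4p)/p = C_\tau\pi_{1/2}(x)+O(x^{1/2}/\log^c x)$ from \cite{dp} — the averaged Lang--Trotter formula \eqref{avg:dp} is a consequence of this prime sum, not what you feed into the computation. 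Second, your hedge about ``a factor depending on the twist distribution within the family'' is unnecessary: since $j(Z)$ modulo $p$ is a near-permutation rational when $\deg j=1$, and since for each $j$-invariant (outside $j=0,1728$) exactly one of the two $\F_p$-twists has trace $\tau$ for $\tau\neq 0$, the count $M_p(\tau)$ equals $H(\tau^2-4p)+O(1)$ with no extraneous multiplicative constant — this is precisely \cref{lem:isoperm}. Neither slip affects the validity of the asymptotic, as the constant $C_\tau$ absorbs any fixed multiple, but the paper's formulation is cleaner.
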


One can also consider the case of extremal primes:

\begin{theorem}
\label{prop:setextremallin}
Given the prime-indexed sequence $\A(p)=\pm \floor{2\sqrt p}$ and $\deg j=1$, we have
$$\sum_{\substack{t\in \S(T)\\\Delta(t)\neq 0}}\pit{t}(\A;x)=\#\S(T)\frac{8x^{1/4}}{3\pi\log x}+O\la(\#\S(T)\frac{x^{1/4}}{\log^2 x}+F(T)\frac{x^{5/4}}{\log x}\ra).$$
\end{theorem}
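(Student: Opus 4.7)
The plan is to adapt the averaging strategy used for \Cref{prop:setconstlin}, combined with the analysis of Hurwitz class number averages over extremal primes carried out in \cite{jp,gj}. After interchanging orders of summation, we write
\begin{align*}
\sum_{\substack{t\in \S(T)\\\Delta(t)\neq 0}}\pit{t}(\A;x)
&= \sum_{p\leq x}\sum_{r\in\set{\pm\floor{2\sqrt p}}}\#\set{t\in \S(T):\Delta(t)\neq 0,\,a_p(E(t))=r}.
\end{align*}
For all but finitely many primes $p$ the trace $a_p(E(t))$ depends only on $t\bmod p$, so the inner count equals $\sum_{w\bmod p:\,a_p(E_w)=r}R_{\S(T),p}(w)$, where $E_w$ denotes the reduction of $E(Z)$ at $Z=w$. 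Applying the hypothesis on $R_{\S(T),p}(w)$ uniformly in $w$ then gives
$$\#\set{t\in \S(T):a_p(E(t))=r} = M_p(r)\frac{\#\S(T)}{p}+O\la(M_p(r)\la(\frac{\#\S(T)}{p^2}+F(T)\ra)\ra),$$
with $M_p(r):=\#\set{w\bmod p:a_p(E_w)=r}$.

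Next, using $\deg j=1$, we identify $M_p^*:=M_p(\floor{2\sqrt p})+M_p(-\floor{2\sqrt p})$ with a Hurwitz class number. The map $w\mapsto j(w)$ is a M\"obius transformation and hence induces a bijection of $\F_p\cup\set{\infty}$. For each $j_0\in\F_p\sm\set{0,1728}$ there are precisely two $\F_p$-isomorphism classes of elliptic curves with $j$-invariant $j_0$; these form a quadratic twist pair with opposite Frobenius traces, and the specific Weierstrass model $E_w$ at the unique preimage of $j_0$ realises exactly one of the two twists. Hence $M_p^*$ equals the number of $j$-invariants in $\F_p$ admitting some iso class of trace $\pm\floor{2\sqrt p}$, which by Deuring's theorem equals $H(4p-\floor{2\sqrt p}^2)+O(1)$.

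Substituting and summing over $p\leq x$ reduces matters to evaluating $\sum_{p\leq x}H(4p-\floor{2\sqrt p}^2)/p$, while the secondary errors $\#\S(T)\sum_p M_p^*/p^2$ and $F(T)\sum_p M_p^*$ are controlled using $H(N)\ll N^{1/2+\varepsilon}$ together with the average bound $\sum_{p\leq x}H(4p-\floor{2\sqrt p}^2)\ll x^{5/4}/\log x$ derivable from \cite{gj}; this gives $O(\#\S(T))+O(F(T)x^{5/4}/\log x)$ respectively, both absorbed in the stated error. The main obstacle, and the bulk of the work, is then to establish the refined asymptotic
$$\sum_{p\leq x}\frac{H(4p-\floor{2\sqrt p}^2)}{p} = \frac{8x^{1/4}}{3\pi\log x}+O\la(\frac{x^{1/4}}{\log^2 x}\ra),$$
which we would prove by combining the evaluation of $H(4p-\floor{2\sqrt p}^2)$ via the Eichler--Selberg trace formula and the Dirichlet class number formula (as in the CM analysis of \cite{jp}) with partial summation against the prime counting function, in the spirit of the averaging framework of \cite{gj}. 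The constant $8/(3\pi)$ reflects the underlying distribution of Frobenius traces near the Weil bound made explicit in the CM setting.
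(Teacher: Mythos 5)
Your proposal is correct and follows essentially the same route as the paper: interchange the sums, use the hypothesis on $R_{\S(T),p}(w)$, exploit the M\"obius bijection from $\deg j = 1$ to express the relevant $w$-count as a Hurwitz class number, and then apply the extremal case of \cref{lem:hur}, namely $\sum_{p\le x}H(\lfloor 2\sqrt p\rfloor^2-4p)/p = \tfrac{8}{3\pi}\tfrac{x^{1/4}}{\log x}+O(x^{1/4}/\log^2 x)$, which the paper quotes from \cite{gj} rather than rederiving via Eichler--Selberg and partial summation as you propose. Your explicit twist-pair accounting — the unique $w$ lying over each $j$-invariant realises exactly one of the two twist classes, so $M_p(\lfloor 2\sqrt p\rfloor)+M_p(-\lfloor 2\sqrt p\rfloor)$ amounts to running once over the $j$-invariants and contributes a single factor of $H$ rather than two — makes transparent the bookkeeping that the paper handles more tersely via \cref{lem:isoperm}.
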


Let a \textit{near-permutation rational for $p$} be a rational function that is injective on its non-singular points in $\F_p$. 
Examples of near-permutation rationals are permutation polynomials which are polynomials that permute $\F_p$.
Let $\pit{t}(\tau;x,\congval,\modval)$ count the number of primes $p \leq x$ such that $a_p(E(t))=\tau$ and $p \equiv \congval \pmod \modval$. Then we have

\begin{theorem}
\label{prop:perm}\ 

			(i) Suppose $j(Z)$ is a near-permutation rational for all $p\equiv \congval\pmod\modval$ such that $p>x_0$ for some $x_0$.
		    For all $c>0$ we have
			$$\sum_{\substack{t\in \S(T)\\\Delta(t)\neq 0}}\pit{t}(\tau;x,\congval,\modval) = \#\S(T)C_{\tau,\congval,\modval}\frac{x^{1/2}}{\log x}+O\la(\#\S(T)\frac{x^{1/2}}{\log^c x}+F(T)\frac{x^{3/2}}{\log x}\ra),$$
            for some constant $C_{\tau,\nu,\omega}$.

\label{prop:perm2}
(ii) Suppose $j_1(Z), j_2(Z)$ are both near-permutation rationals for all $p\equiv \congval \pmod \modval$  such that $p>x_0$ for some $x_0$.
If $2\nmid \tau$ or $\tau=0$ we have
\begin{align*}
\sum_{\substack{t_1,t_2\in \S(T)\\\Delta_1(t_1)\neq 0\neq \Delta_2(t_2)}}\pitt{t_1}{t_2}(\tau,\tau;x,\congval,\modval)
 =\#\S(T)^2 C \log_2  x	+	O\la(\#\S(T)+F(T)x\ra)^2.
\end{align*}
for a suitable constant $C=C_{\tau,\tau,\congval,\modval}$.
\end{theorem}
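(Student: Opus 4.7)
The plan is to adapt the averaging argument used to prove Theorems \ref{prop:setconstlin} and \ref{prop:set2constlin}, restricting to primes in the arithmetic progression $p \equiv \congval \pmod \modval$ and replacing the linearity hypothesis $\deg j = 1$ with the near-permutation property. The key point is that under the near-permutation hypothesis, the map $t \mapsto j(t)$ remains injective on non-singular residues in $\F_p$, which is all the structural input that the proofs of the linear-$j$ theorems actually use at the counting stage. First I would swap the order of summation,
\begin{align*}
\sum_{\substack{t \in \S(T) \\ \Delta(t) \neq 0}} \pit{t}(\tau;x,\congval,\modval) = \sum_{\substack{p \leq x \\ p \equiv \congval \pmod \modval \\ p > x_0}} \#\{t \in \S(T) : \Delta(t) \neq 0,\ a_p(E(t)) = \tau\} + O(1),
\end{align*}
and partition the inner count by the value of $j(t) \bmod p$. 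Injectivity plus the hypothesis $R_{\S(T),p}(w) \ll \#\S(T)/p + F(T)$ yields $\#\S(T)/p + O(F(T))$ parameters hitting each admissible $j$-value, so the inner sum factors to leading order as
\begin{align*}
\la(\frac{\#\S(T)}{p} + O(F(T))\ra) N(p,\tau),
\end{align*}
where $N(p,\tau)$ counts $\F_p$-isomorphism classes of elliptic curves with trace $\tau$ whose $j$-invariant lies in the image of $j \bmod p$ and whose twist matches the equation $E(Z)$.

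By Deuring's theorem (cf.\ \cite{bir}), $N(p,\tau)$ is comparable to the Hurwitz class number $H(\tau^2-4p)$ up to bounded corrections from $j \in \{0,1728\}$ and twist parity. Summing over $p$ reduces the main term of part (i) to an average of the form
\begin{align*}
\sum_{\substack{p \leq x \\ p \equiv \congval \pmod \modval}} \frac{H(\tau^2 - 4p)}{p} \sim C \frac{x^{1/2}}{\log x},
\end{align*}
with error $O(x^{1/2}/\log^c x)$ for any $c > 0$; combined with $H(\tau^2 - 4p) \ll p^{1/2} \log p$, the $F(T)$-contribution produces the claimed $F(T) x^{3/2}/\log x$ term. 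For part (ii), the same swap is applied and, because both $j_1$ and $j_2$ satisfy the near-permutation hypothesis, the joint count for $(t_1,t_2)$ factors to leading order as the product of two single-curve counts. Squaring the estimate from part (i) and invoking
\begin{align*}
\sum_{\substack{p \leq x \\ p \equiv \congval \pmod \modval}} \la(\frac{H(\tau^2-4p)}{p}\ra)^2 \sim C' \log_2 x
\end{align*}
gives the main term of size $\#\S(T)^2 \log_2 x$, while the cross and pure $F(T)$-terms produce the error $O(\#\S(T) + F(T)x)^2$ after estimating $\sum_p H(\tau^2-4p)^2$ on average. The parity condition $2 \nmid \tau$ or $\tau = 0$ is needed to rule out quadratic-twist obstructions when demanding the same trace $\tau$ for both curves simultaneously; outside these cases, the constant $C_{\tau,\tau,\congval,\modval}$ may either vanish or require a delicate sign analysis.

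The main obstacle is establishing the Chebotarev-type asymptotics for $\sum H(\tau^2-4p)/p$ and $\sum H(\tau^2-4p)^2/p^2$ over primes $p \equiv \congval \pmod \modval$ with a genuine power-of-log saving. I would decompose $H$ via the Kronecker class number formula, reinterpret the splitting condition as a Frobenius condition in the appropriate ring class field, and invoke effective Chebotarev after adjoining $\Q(\zeta_\modval)$ so that the congruence $p \equiv \congval \pmod \modval$ is encoded as a condition in the Galois group of the compositum; this parallels the arguments of \cite{dp,dp2} with the modulus absorbed into the extension. Secondary but purely bookkeeping issues are the bounded exceptional contributions from $j \in \{0, 1728\}$, from singular points of $j_i(Z)$, and from twist parity, all of which fit within the stated error terms.
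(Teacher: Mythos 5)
Your outer argument is exactly the paper's: under the near-permutation hypothesis, the analogue of Lemma~\ref{lem:isoperm} gives $\sum_{w\in\F_p,\,\Delta(w)\not\equiv 0,\,a_{w,p}=\tau}1=\H{\tau}{p}+O(1)$, one swaps the order of summation, uses $R_{\S(T),p}(w)=\#\S(T)p^{-1}+O(\#\S(T)p^{-2}+F(T))$ (note the proof of the asymptotic requires the asymptotic form of the hypothesis, not the upper bound $\ll$ you quoted in passing), and reduces everything to restricted Hurwitz-number sums over $p\equiv\congval\pmod\modval$. This matches the paper, which dispatches Theorem~\ref{prop:perm} in two lines by replacing Lemma~\ref{lem:hur} with Proposition~\ref{prop:avgHcong} and Lemma~\ref{lem:hur2} with Proposition~\ref{prop:avgHHcong}.

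The gap is in how you propose to prove those Hurwitz-number sum asymptotics, especially for part~(ii). You describe decomposing $H$ via the class number formula, ``reinterpret[ing] the splitting condition as a Frobenius condition in the appropriate ring class field,'' and ``invoking effective Chebotarev.'' For part~(i) this can work and is essentially what the cited reference \cite{jcong} accomplishes, but for part~(ii) the required input is
\begin{align*}
\sum_{\substack{p\leq x\\p\equiv\congval\pmod\modval}}\frac{\H{\tau}{p}^2}{p^2}=C_{\tau,\tau,\congval,\modval}\log_2 x+O(1),
\end{align*}
and the product $\H{\tau}{p}^2$ does not translate into a single Frobenius condition in a ring class field; you have two independent decompositions $\tau^2-4p=d_1f^2=d_2g^2$ and you must control the joint distribution of $(d_1,d_2)$ as $p$ varies. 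The paper handles this in Section~\ref{sec:res} by an entirely different, more elementary route: it introduces the two-variable character sums $c_{f,g}^{\tau,\congval,\modval}(m,n)$ (a restricted analogue of \cite{adj}), establishes their multiplicativity and local evaluations, and controls the prime count in the resulting overlapping congruence system via the Barban--Davenport--Halberstam theorem. Your proposal does not indicate how a Chebotarev argument would deliver the needed uniform error terms across the two nested discriminant sums, which is precisely where the difficulty lies. Relatedly, the phrase ``squaring the estimate from part~(i)'' misstates the situation: the needed quantity is $\sum_p H^2/p^2$, not $\left(\sum_p H/p\right)^2$, although your displayed formula does identify the right sum.

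Two smaller points. First, the parity restriction $2\nmid\tau$ or $\tau=0$ is not there to ``rule out quadratic-twist obstructions''; in the paper it reflects the fact that the local evaluation of the character sums $c_{f,g}^{\tau,\congval,\modval}$ at $p=2$ (and the coprimality condition $(fg,2\tau)=1$) is cleanest when $\tau$ is odd, with $\tau=0$ treated separately by reference to \cite{w}. Second, your claim that ``$N(p,\tau)$ is comparable to $H(\tau^2-4p)$'' is weaker than what you need: the main term requires $N(p,\tau)=\H{\tau}{p}+O(1)$, which is exactly what the near-permutation property delivers through Lemma~\ref{lem:isoperm}; ``comparable'' would only give an upper bound.
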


We also consider `thinner' families with the use of exponential functions.

\begin{theorem}
\label{prop:exp}\ 

(i) Fix $b\in \Z\sm \set{0}$, and let $f,g$ be such that $j(Z)=Zb^Z$, then for all $c>0$ we have
$$\sum_{\substack{t\in \S(T)\\\Delta(t)\neq 0}}\pit{t}(\tau;x) =\#\S(T)C_\tau \frac{x^{1/2}}{\log x} + O\la(\#\S(T)\frac{x^{1/2}}{\log^c x}+F(T)\frac{x^{5/2}}{\log x}\ra).$$

\label{prop:exp2}
(ii) Fix $b_1,b_2\in \Z\sm\set{0}$ and let $f_i,g_i$ be such that $j_i(Z)=Zb_i^Z$.
For all $c>0$ we have
$$\sum_{\substack{t_1,t_2\in \S(T)\\\Delta_1(t_1)\neq 0\neq \Delta_2(t_2)}}\pitt{t_1}{t_2}(\tau_1,\tau_2;x) =\#\S(T)^2C_{\tau_1,\tau_2}\log_2  x	
+ O(\#\S(T)+F(T)x^2)^2.$$
\end{theorem}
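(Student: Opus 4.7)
The approach parallels \cref{prop:setconstlin}, with the combinatorial input---counting $t\in \S(T)$ with $j(t)\equiv j^*\pmod p$---replaced by one adapted to $j(Z)=Zb^Z$. First I would swap summation order to write
\begin{align*}
\sum_{\substack{t\in \S(T)\\\Delta(t)\neq 0}}\pit{t}(\tau;x)=\sum_{p\leq x}\sum_{j^*\in \F_p}\omega_{j^*}(p)\cdot\#\{t\in \S(T):tb^t\equiv j^*\pmod p\},
\end{align*}
where $j^*$ runs over $j$-invariants of $\F_p$-curves with Frobenius trace $\tau$ and the Birch twist weights $\omega_{j^*}(p)$ satisfy $\sum_{j^*}\omega_{j^*}(p)=H(\tau^2-4p)=O(\sqrt{p}\log p)$.

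Next I would estimate the inner count. Observing that $b^t\pmod p$ depends on $t$ only through $\ell:=t\bmod \mathrm{ord}_p(b)$, I partition $\S(T)$ by $\ell$. Within each class the condition becomes the linear congruence $t\equiv j^*b^{-\ell}\pmod p$, and since $\gcd(p,\mathrm{ord}_p(b))=1$, CRT determines a unique residue modulo $p\cdot \mathrm{ord}_p(b)$ per $\ell$. Applying the hypothesis on $R_{\S(T),p}(w)$ class by class (or equivalently expanding the indicator of $b^t\equiv c\pmod p$ in characters mod $\mathrm{ord}_p(b)$ and pairing with character sums mod $p$) yields
\begin{align*}
\#\{t\in \S(T):tb^t\equiv j^*\pmod p\}=\frac{\#\S(T)}{p}+O\la(\frac{\#\S(T)}{p^2}+pF(T)\ra),
\end{align*}
the factor of $p$ reflecting the bound $\mathrm{ord}_p(b)\leq p-1$ on the number of summed classes. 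Feeding this back and applying the Birch--Deuring asymptotic $\sum_{p\leq x}H(\tau^2-4p)/p=C_\tau x^{1/2}/\log x+O(x^{1/2}/\log^c x)$ from \cref{prop:setconstlin} delivers the main term $\#\S(T)C_\tau x^{1/2}/\log x$ with secondary error $\#\S(T)x^{1/2}/\log^c x$, while the $F(T)$ contribution is bounded by
\begin{align*}
F(T)\sum_{p\leq x}p\cdot H(\tau^2-4p)\ll F(T)\sum_{p\leq x}p^{3/2}\log p\ll F(T)\frac{x^{5/2}}{\log x},
\end{align*}
matching (i).

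For (ii) the same decomposition factorises across the two curves and the inner-count estimate applies to each factor. The product of the two $\#\S(T)H_i/p$ main contributions, summed via the pair Birch--Deuring input $\sum_{p\leq x}H_1 H_2/p^2\sim C_{\tau_1,\tau_2}\log_2 x$ (as in \cref{prop:set2constlin}), yields $\#\S(T)^2 C_{\tau_1,\tau_2}\log_2 x$; the error terms are assembled from the pairwise products of $\#\S(T)/p^2$ and $pF(T)$ pieces, producing the claimed $(\#\S(T)+F(T)x^2)^2$.

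\textbf{Main obstacle.} The delicate step is the inner count: the hypothesis $R_{\S(T),p}(w)$ controls distribution modulo $p$ alone, whereas the CRT partition asks for joint distribution modulo $p\cdot \mathrm{ord}_p(b)$. I expect to treat this either by upgrading the input to composite moduli when $\S(T)$ has enough structure, or by estimating the incomplete sum $\sum_{t\in \S(T)}e_p(\alpha tb^t)$ directly via Fourier inversion on $\F_p$ combined with Weil-type bounds on the resulting linear subsums per residue class of $\mathrm{ord}_p(b)$. In either route the cost is exactly the extra $\mathrm{ord}_p(b)$-factor, explaining the $x$-inflation of the $F(T)$-error compared with the linear-$j$ case.
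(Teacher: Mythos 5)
Your proposal is essentially the paper's approach: the period of $tb^t \bmod p$ is $p(p-1)$, the Hurwitz count inflates the $F(T)$ error by an extra factor of roughly $p$ relative to the polynomial-$j$ case, and the main term is assembled from $\sum_p H(\tau^2-4p)/p$ exactly as in \cref{prop:setconstlin}. The ``main obstacle'' you flag---that the hypothesis on $R_{\S(T),p}(w)$ controls residues modulo $p$ while the CRT decomposition requires joint control modulo $p\cdot\mathrm{ord}_p(b)$---is resolved in the paper in precisely the way your first suggestion anticipates: Section~4.2 explicitly redefines $\S(T)\subset\Z$ and $R_{\S(T),p}(w)=\#\{t\in\S(T):t\equiv w\pmod{p(p-1)}\}$ with the assumption $R_{\S(T),p}(w)=\#\S(T)[p(p-1)]^{-1}+O(F(T))$ uniform in $w$ and $p$. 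There is no Weil/Fourier step needed; the hypothesis is simply upgraded to the composite modulus.

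Two small imprecisions relative to the paper's argument. First, the paper avoids the ``Birch twist weight'' framing: the coefficient functions $f,g$ are chosen in the specific form involving $j^*(Z)^n(j^*(Z)-1728)^m h(Z)^2$ so that the trace $a_{w,p}$ depends only on $j(w)$, and \cref{lem:isolam} then gives $\sum_{0\le w<p(p-1),\,\Delta(w)\not\equiv 0,\,a_{w,p}=\tau}1=(p-1)\H{\tau}{p}+O(p)$ directly. Second, your intermediate estimate carries a $\#\S(T)/p^2$ error term that the paper's hypothesis does not produce; with the paper's hypothesis one gets
\begin{align*}
\#\{t\in\S(T):tb^t\equiv j^*\pmod p\}=\frac{\#\S(T)}{p}+O\la(pF(T)\ra),
\end{align*}
which is what feeds into the stated $F(T)x^{5/2}/\log x$ error via $\sum_{p\le x}p\cdot\H{\tau}{p}\ll x^{5/2}/\log x$. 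Part (ii) then proceeds by squaring this structure, exactly as you outline. With the hypothesis reformulated at the outset, the argument is complete.
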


Lastly, we present two central limit theorems. Given any infinite set $P$ of primes, let $P_x :=\{p \leq x \mid p \in P\}$, and let $\pi_P (x) := \# P_x$. We say that a polynomial is a \textit{permutation polynomial for $P$} if it is a permutation polynomial for each prime in $P$. We also let $\widetilde{a}_{E}(p) := a_{E}(p) / \sqrt{p}$ be the normalised trace of Frobenius for elliptic curve $E$.
Given two functions $\phi$ and $\psi$ on the integers, define the following family of pairs of elliptic curves, where $A$ and $B$ are positive integers
\begin{align*}
T_{\phi,\psi}(A,B) := \{(E_1, E_2) \mid E_1 \not \simeq E_2, E_i= E(\phi(a_i),\psi(b_i)),  |a_i|\leq A, |b_i| \leq B, i = 1,2\}.
\end{align*}

\begin{theorem}
Let $h$ be any continuous function on $\R$ and let $P$ be an infinite set of primes. Either:\\ 
\label{pclt} (i) let $\phi$ and $\psi$ are permutation polynomials for $P$, or\\
\label{eclt} (ii) for integers $a,b,c,d,h,k$ (where $a,c,d,j$ are non-zero), let $\phi(n) = (an + b)c^n$ and $\psi(n) = (dn + h)k^n$ be functions on $\Z$, where $P$ is such that $p \nmid abcdhk$ for all $p \in P$.\\
Then, provided  $\log A/\log x$ and $\log B/\log x  \rightarrow \infty$ as $x \rightarrow \infty$, we have 
\begin{align*}
\frac{1}{\# T(A,B)}\sum_{(E,E')\in T (A,B)}
h \left(\frac{\sum_{p \in P_x} \widetilde{a}_{E}(p)\widetilde{a}_{E'}(p)}{\sqrt{\pi_P(x)}}\right) 
\rightarrow \frac{1}{\sqrt{2 \pi}}\int_{-\infty }^{ \infty}h (t) e ^{-t^2 /2}dt,
\end{align*}
as $x \rightarrow \infty$.
\end{theorem}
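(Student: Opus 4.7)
The plan is to adapt the method of moments used by Murty--Prabhu (whose result corresponds to the case $E'=E$ with $P$ all primes) to this bilinear setting. Since $N(0,1)$ is determined by its moments and the random variables $\widetilde{a}_{E}(p)\widetilde{a}_{E'}(p)$ are bounded in absolute value by $4$, it suffices to prove that for every integer $k\ge 0$,
$$
\frac{1}{\#T(A,B)}\sum_{(E,E')\in T(A,B)}\left(\frac{\sum_{p\in P_x}\widetilde{a}_{E}(p)\widetilde{a}_{E'}(p)}{\sqrt{\pi_P(x)}}\right)^{\!k}
\longrightarrow
\mu_k,
$$
with $\mu_{2m}=(2m-1)!!$ and $\mu_{2m+1}=0$. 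Expanding the $k$-th power, interchanging summations, and discarding the diagonal $E_1\simeq E_2$ (a fraction $O(1/AB)$ of $T(A,B)$, hence negligible), the left-hand side becomes
$$
\pi_P(x)^{-k/2}\sum_{p_1,\ldots,p_k\in P_x}M(p_1,\ldots,p_k)^2+o(1),
$$
where
$$
M(p_1,\ldots,p_k):=\frac{1}{(2A+1)(2B+1)}\sum_{\substack{|a|\le A\\ |b|\le B}}\prod_{i=1}^k\widetilde{a}_{E(\phi(a),\psi(b))}(p_i).
$$
The factorisation into the square of a single-family moment uses independence of the two coordinates of $T(A,B)$.

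The central analytic step is to prove $M(p_1,\ldots,p_k)=\mu^{ST}(p_1,\ldots,p_k)+o(1)$ uniformly across all tuples with $p_i\le x$, where $\mu^{ST}(p_1,\ldots,p_k):=\prod_q\mu^{ST}_{m_q}$, the product running over the distinct primes $q$ in the tuple with multiplicity $m_q$ and $\mu^{ST}_m$ the $m$-th moment of the semicircle law. In case (i), the reductions of $\phi$ and $\psi$ modulo each $p_i$ are bijections of $\F_{p_i}$, so by the Chinese Remainder Theorem the sum over $(a,b)$ with $|a|\le A,\,|b|\le B$ differs from a uniform sum over $\F_{N}^2$ with $N=p_1\cdots p_k$ by a relative error $O(N/A+N/B)$; Birch's equidistribution theorem then delivers the Sato--Tate moment. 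In case (ii), this bijective reduction is replaced by a Weil-type exponential-sum argument: since $c^a$ and $k^b$ have multiplicative orders dividing $p-1$, partitioning $[-A,A]$ and $[-B,B]$ into cosets of these orders and applying additive-character orthogonality yields equidistribution of $(\phi(a),\psi(b))$ modulo each $p_i$, with the coprimality $p\nmid abcdhk$ preventing the exponential factors from degenerating. In both cases the hypothesis $\log A/\log x,\log B/\log x\to\infty$ ensures these errors remain $o(\pi_P(x)^{-k/2})$ after summation over at most $\pi_P(x)^k$ tuples.

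Extracting the leading constant is then purely combinatorial. Since $\mu^{ST}_m=0$ for $m$ odd, $\mu^{ST}(p_1,\ldots,p_k)^2$ is nonzero only when each distinct prime in the tuple appears with even multiplicity. For odd $k$ this is impossible and $\mu_k=0$; for $k=2m$ even, tuples in which each distinct prime appears with multiplicity exactly $2$ number $(2m-1)!!\,\pi_P(x)^m(1+o(1))$, each contributing $(\mu^{ST}_2)^2=1$, while tuples with some prime of multiplicity $\ge 4$ number $O(\pi_P(x)^{m-1})$ and vanish after division by $\pi_P(x)^m$. The main obstacle will be establishing the uniform error estimate $M=\mu^{ST}+o(1)$: case (i) requires a form of Birch's theorem uniform across multi-prime moduli $N=p_1\cdots p_k$ (obtainable from the classical theorem combined with CRT and the permutation property), whereas case (ii) requires adequate exponential-sum cancellation for the exponential-weighted family, which is exactly where the coprimality hypothesis $p\nmid abcdhk$ plays its essential role.
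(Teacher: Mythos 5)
Your proposal is essentially correct and takes a genuinely different route from the paper, so let me compare the two.

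The paper's proof avoids Birch's equidistribution theorem entirely. After the same moment-expansion and factorisation into the square of a single-family average, it applies the Chebyshev/Clebsch--Gordan identity $\widetilde{a}_E(p)^m=\sum_{j=0}^m h_m(j)\,\widetilde{a}_E(p^j)$ (Conrey--Duke--Farmer) to linearise all powers, and then invokes the Baier--Prabhu trace formula $\sum_{|a|\le A,|b|\le B}\widetilde{a}_{E(a,b)}(n)=4AB\,S(n)+O(d(n)s(n)^2)+O(d(n)s(n)(A+B))$ for composite $n=p_1^{j_1}\cdots p_w^{j_w}$, exploiting that $S$ is multiplicative, vanishes at odd prime powers, and satisfies $S(p^k)\ll k/\sqrt{p}$. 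Your approach instead keeps the powers $\prod_j\widetilde{a}(q_j)^{m_j}$, reduces the box average to an average over $\F_{q_1}^2\times\cdots\times\F_{q_s}^2$ using CRT plus the permutation property, and then applies Birch's moment theorem prime by prime. The two methods are genuinely different ways of injecting the same Sato--Tate information: the multiplicativity of $S(n)$ in the paper plays exactly the role of your CRT factorisation, and the vanishing $S(p^j)=0$ for $j$ odd is the trace-formula shadow of your $\mu^{\mathrm{ST}}_m=0$ for $m$ odd. Both work; the trace-formula route has the advantage of fully explicit, already-published error bounds (BP19, Lemma 4.1) and a one-line adaptation to case (ii) (change the period to $\gamma(n)=\varphi(s(n)^2)$ and note $S_\gamma=S$), whereas your route needs a uniform version of Birch across products of up to $k$ distinct primes -- straightforward but must be written out.

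Two things in your sketch need attention if this is to be a complete proof. First, you should make explicit that the Birch errors enter \emph{multiplicatively}: for a tuple with some odd multiplicity, the contribution to $M$ is $O\bigl(\prod_{m_j\text{ odd}}q_j^{-1/2}\bigr)$ rather than $O\bigl(\sum q_j^{-1/2}\bigr)$; this is what forces the "bad" error sums down to $o(1)$ even for arbitrary infinite $P$ (note $\frac{1}{\pi_P(x)}\sum_{q\in P_x}q^{-1/2}\to 0$, but only because the reciprocal square-root sum over the first $n$ primes grows like $\sqrt{n/\log n}$; this is the analogue of the paper's use of Nagoshi's bound $\pi_P(x)^{-1}(\sum_{p\in P_x}1/p)^{r/2}\to 0$). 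Second, your treatment of case (ii) -- "additive-character orthogonality yields equidistribution of $(\phi(a),\psi(b))$ modulo each $p_i$" -- is only a pointer, not an argument. The relevant fact is that $\phi(n)=(an+b)c^n\bmod p$ is periodic with period $p\cdot\mathrm{ord}_p(c)\mid\varphi(p^2)$ and, because $p$ and $\mathrm{ord}_p(c)$ are coprime, hits each residue class $\mathrm{ord}_p(c)$ times per period (here $p\nmid ac$ is essential). You would need to glue this together over several primes $q_1,\ldots,q_s$, giving a common period up to $\prod q_j(q_j-1)\le x^{2k}$, which is where the hypothesis $\log A/\log x,\log B/\log x\to\infty$ is again used. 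The paper absorbs all of this into the single observation that the period is $\gamma(n)=\varphi(s(n)^2)$ and that the period average $S_\gamma(n)$ equals $S(n)$; you should either state and prove the analogous equidistribution lemma or cite a source for it.
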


\begin{remark}
An example in the case of part (i) is $\phi = x^5 + 5x^3 + 5x,$ $\psi=x ^5 $ and $P$ being any infinite subset of $\{p \mid p \equiv 2,4 \pmod 5\}$ \cite{MP13}.
\end{remark}

Let $\mathcal{F}_{k,N}$ be the set of normalised cuspidal Hecke eigenforms of weight $k$ and level $N$, and let $\lambda_f (n)$ denote the $n$th Hecke eigenvalue of a modular form $f$.

\begin{theorem}\label{mf-clt}
For $kN$ such that $\log kN / \log x \rightarrow \infty$ as $x \rightarrow \infty$,
\begin{align*}
\frac{1}{(\# \mathcal{F}_{k,N})^2}
\sum_{\text{ distinct }f_1, f_2 \in \mathcal{F}_{k,N}} 
h \left(\frac{\sum_{p \in P_x} \lambda_{f_1}(p) \lambda_{f_2}(p)}{\sqrt{\pi_P(x)}}\right) \rightarrow \frac{1}{\sqrt{2 \pi}}\int_{-\infty }^{ \infty}h (t) e ^{-t^2 /2}dt,
\end{align*}
as $x \rightarrow \infty$.
\end{theorem}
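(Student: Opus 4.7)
The plan is to apply the method of moments: since the standard normal distribution is determined by its moments, it suffices to prove that for every integer $m \geq 1$,
\begin{align*}
\mathcal{M}_m(x) := \frac{1}{(\#\mathcal{F}_{k,N})^2}\sum_{\substack{f_1, f_2 \in \mathcal{F}_{k,N}\\f_1 \neq f_2}} \left(\frac{1}{\sqrt{\pi_P(x)}}\sum_{p \in P_x}\lambda_{f_1}(p)\lambda_{f_2}(p)\right)^{\!m}
\end{align*}
converges to $(m-1)!!$ if $m$ is even and to $0$ if $m$ is odd, as $x \to \infty$ under the hypothesis $\log kN/\log x \to \infty$. Standard arguments then extend this to the integrated statement against any continuous $h$.

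First, expanding the $m$-th power and swapping orders of summation, for each tuple $(p_1, \dots, p_m) \in P_x^m$ the sum over distinct pairs factors as
\begin{align*}
\sum_{f_1 \neq f_2} \prod_{i=1}^m \lambda_{f_1}(p_i)\lambda_{f_2}(p_i) = \Bigl(\sum_f \prod_{i}\lambda_f(p_i)\Bigr)^{\!2} - \sum_f \prod_{i}\lambda_f(p_i)^2.
\end{align*}
The diagonal term is bounded by $4^m\cdot \#\mathcal{F}_{k,N}$ via Deligne's bound $|\lambda_f(p)|\leq 2$; after dividing by $(\#\mathcal{F}_{k,N})^2\pi_P(x)^{m/2}$ and summing over $P_x^m$, it contributes $O(\pi_P(x)^{m/2}/\#\mathcal{F}_{k,N}) = o(1)$ under our hypothesis, since $\#\mathcal{F}_{k,N} \gg kN$ grows faster than any polynomial in $x$.

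The key analytic input is a quantitative vertical Sato--Tate estimate, obtained from the Eichler--Selberg trace formula together with the Hecke relation $\lambda_f(q)^k = \sum_\ell c_{k,\ell}\,\lambda_f(q^\ell)$: writing $(p_1, \dots, p_m)$ in terms of its distinct primes $q_1, \dots, q_r$ with multiplicities $k_1, \dots, k_r$,
\begin{align*}
\frac{1}{\#\mathcal{F}_{k,N}}\sum_f \prod_i \lambda_f(p_i) = \prod_{j=1}^r \int_{-2}^{2}t^{k_j}\,d\mu_{q_j}(t) + O\!\left(\frac{(p_1\cdots p_m)^{A}}{(kN)^{c}}\right),
\end{align*}
where $\mu_q$ is the $q$-adic Plancherel measure and $\int t^k\,d\mu_q \to \int t^k\,d\mu_\infty$ as $q \to \infty$, equal to the Catalan number $C_{k/2}$ for even $k$ and to $0$ for odd $k$. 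The squared error is bounded by $x^{2Am}/(kN)^{2c}$, which is negligible after summation over tuples, by hypothesis.

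Finally, I would classify tuples by their multiplicity pattern. For odd $m$, every tuple has some $k_j$ odd, forcing at least one factor of the main term to tend to $0$, and the total contribution is $o(1)$. For even $m$, the principal contribution comes from tuples that pair up into $m/2$ distinct primes each of multiplicity exactly $2$: there are $(m-1)!!$ perfect matchings of $[m]$, times $\pi_P(x)^{m/2}(1+o(1))$ ordered choices of distinct primes, and each such tuple contributes main term $\prod_{j=1}^{m/2}(\int t^2\,d\mu_{q_j})^2 \to 1$, yielding the target value $(m-1)!!$ after dividing by $\pi_P(x)^{m/2}$. Patterns with some $k_j \geq 4$ use at most $m/2 - 1$ distinct primes and so contribute only $O(\pi_P(x)^{m/2-1})$ ordered tuples, hence $o(1)$. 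The main obstacle is obtaining the quantitative vertical Sato--Tate estimate uniformly in the primes involved; this is a technical but known consequence of the Eichler--Selberg trace formula, and the hypothesis $\log kN/\log x \to \infty$ is precisely calibrated to make all of the resulting error terms negligible.
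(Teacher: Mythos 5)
Your proof follows essentially the same route the paper takes: the paper's proof of Theorem \ref{mf-clt} is a one-paragraph pointer to the proof of the elliptic-curve pair CLT (Theorem \ref{pclt}(i)) with the relevant trace formula replaced by Serre's Eichler--Selberg estimate $\sum_{f\in\mathcal F_{k,N}}\lambda_f(n)=\delta_n\tfrac{k-1}{12}\tfrac{1}{\sqrt n}\psi(N)+O(n^{1/2}N^{1/2}d(N))$, and the steps you carry out --- method of moments, square-minus-diagonal decomposition, linearizing $\lambda_f(q)^k$ via Hecke relations, applying the trace formula, and classifying prime tuples by multiplicity pattern --- match the paper's template exactly (your phrasing in terms of the $q$-adic Plancherel measure $\mu_q$ is a repackaging of the paper's use of the coefficients $h_m(j)$ together with the fact that $\delta_n$ vanishes unless $n$ is a square). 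The proposal is correct.
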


We also refer the reader to Section \ref{sec:ex} for further concrete examples of asymptotic results related to the above theorems.\\

This paper is organised as follows. In Section \ref{sec:prelim}, we set up notation and introduce some lemmas. In Section \ref{sec:basic}, we establish lemmas on the asymptotic behaviour on series involving Hurwitz numbers and obtain some upper bounds on Lang--Trotter type questions. In Section \ref{sec:asym}, we establish some asymptotic results for certain types of families of elliptic curves, including Central Limit Theorems for pairs of elliptic curves for two different families, and a similar result for modular forms. In Section \ref{sec:res}, we obtain bounds on Hurwitz number related series under congruence conditions. In Section \ref{sec:ex}, we obtain further concrete examples of asymptotic results on average. 

\subsection*{Acknowledgements}
The authors would like to thank Amir Akbary for his helpful comments. The first author was supported in part by an NSERC USRA fellowship. The second author was supported in part by an NSERC Discovery Grant.

\section{Preliminaries and Notation}\label{sec:prelim}
        We begin by collecting relevant notation in this section (including that introduced in the previous section).

		We set up families of curves similar to \cite{ss}.		
		Let $f_i(Z),g_i(Z)\in \Z[Z]$ be such that
		\begin{align*}
			\Delta_i(Z)	&:=-16(4f_i(Z)^3+27g_i(Z)^2)\neq 0,\\
			j_i(Z)		&:=\frac{-1728(4f_i(Z))^3}{\Delta_i(Z)}\in \Q(Z)\sm \Q,
		\end{align*}
		and consider the elliptic curves
		$$E_i(Z):\qquad Y^2=X^3+f_i(Z)X+g_i(Z)$$
		over $\Q(Z)$.
		The degree of the rational function $j_i(Z)$ is $\deg j_i=\max(\deg q_i,\deg r_i)$ where $j_i(Z)=\smash{\frac{q_i(Z)}{r_i(Z)}}$ for $q_i(Z),r_i(Z)\in \Z[Z]$ such that $(q_i(Z),r_i(Z))=(1)$.
		In \cref{sec:exp} we briefly relax the condition $f_i(Z),g_i(Z)\in \Z[Z]$ and consider more general families of curves where the coefficient functions $f_i(Z), g_i(Z)$ are not polynomials.

		Let $\A_i:\Spec(\Z)\sm \set{0}\to \Z$ and let 
		\begin{align*}
			\pi_{E_i(z_i)}(\A_i;x)				&=\#\set{p\leq x:a_{i,z_i,p}=\A_i(p)}\\
			\pi_{E_1(z_1),E_2(z_2)}(\A_1,\A_2;x)		&=\#\set{p\leq x:a_{1,z_1,p}=\A_1(p),a_{2,z_2,p}=\A_2(p)},
		\end{align*}
		where $a_{i,z_1,p}=a_p(E_i(z_i))$.
		Let $N_i(z_i)$ be the conductor of $E_i(z_i)$ and let
		\begin{align*}
			\S(T)					&\subset \Q,\\
			\P					&=\max\set{p:\exists u/v\in \Q,p\mid v,p\nmid N_1(u/v)N_2(u/v)},\\
			R_{\S(T),p}(w)			&=\#\set{u/v\in \S(T):(u,v)=1,p\nmid v,u\equiv vw\pmod p},\\
			\norm{R_{\S(T),p}}_\infty		&=\max_{0\leq w\leq p-1}R_{\S(T),p}(w).
		\end{align*}
		In \cref{sec:ex} we consider the cases when $\S(T)$ is the integers $\I(T)=\set{1,\dots,T}$ or the Farey fractions $\cF(T)=\set{\frac{u}{v}\in \Q:(u,v)=1,1\leq u,v\leq T}$ and compare our results to \cite{ss}.
		By \cite{ss} we have $\P<\infty$.
		We will consider families of sets $\S(T)$ which are relatively evenly distributed modulo primes, that is, either
		$$R_{\S(T),p}(w)=\#\S(T)p^{-1}+O(\#\S(T)p^{-2}+F(T))$$
		uniformly in $w$ for some $F(T)$ or
		$$\norm{R_{\S(T),p}}_\infty	\ll	\#\S(T)p^{-1}+F(T)$$
		depending on whether we want an asymptotic formula or just an upper bound.
		Finally, let $\log_2 x=\log \log x$ and $\pi_{1/2}(x)=\int_2^x\frac{\differ t}{2\sqrt t \log t}\sim\frac{x^{1/2}}{\log x}$.
		\begin{lem}\label{lem:iso}
			There exists $x_0$ such that for all $p>x_0$ we have 
			$$\sum_{\substack{w\in \F_p\\\Delta(w)\not\equiv 0\pmod p\\a_{w,p}=\A(p)}}1\leq (\deg j)\H{\A(p)}{p}.$$
			\begin{proof}
				Let $j(Z)=\frac{q(Z)}{r(Z)}$ for coprime $q(Z),r(Z)\in \Z[Z]$ and define $\deg j=\max(\deg q,\deg r)$.
				Since $j(Z)\in \Q(Z)\sm \Q$, there exists $x_0$ such that $p>x_0$ implies $j(Z)\mod p\in \F_p(Z)\sm \F_p$, so the number of solutions to $j(Z)\equiv a\pmod p$ is at most $\deg j$.
				Let $\jp{\A(p)}$ be the set of $j$-invariants of the isomorphism classes of elliptic curves over $\mathbb{F}_p$ with Frobenius trace $\A(p)$.
				\begin{align*}
					\sum_{\substack{w\in \F_p\\\Delta(w)\not\equiv 0\pmod p\\a_{w,p}=\A(p)}}1	=\sum_{\substack{w\in \F_p\\\Delta(w)\not\equiv 0\pmod p\\j(w)\in \jp{\A(p)}}}1
																		&=\sum_{a\in \jp{\A(p)}}\sum_{\substack{w\in \F_p\\\Delta(w)\not\equiv 0\pmod p\\j(w)\equiv a\pmod p}}1\\
																		&\leq \sum_{a\in \jp{\A(p)}}(\deg j)\\
																		&=(\deg j)\H{\A(p)}{p}.\qedhere
				\end{align*}
			\end{proof}
		\end{lem}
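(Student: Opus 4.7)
The plan is to decompose the count according to $j$-invariants and combine (a) a bound on the fibers of the rational function $j(Z)$ modulo $p$, with (b) Deuring's classical theorem that the number of isomorphism classes of elliptic curves over $\F_p$ with Frobenius trace $\A(p)$ equals $H(\A(p)^2-4p)$.

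First I would handle the reduction of $j(Z)$ modulo $p$. Write $j(Z)=q(Z)/r(Z)$ for coprime $q,r\in\Z[Z]$. Because $j(Z)\in\Q(Z)\smallsetminus\Q$, at least one coefficient of $q$ or $r$ that witnesses non-constancy is a fixed rational number; hence for all primes $p$ exceeding some $x_0$ depending only on $f,g$, the mod-$p$ reduction of $j$ remains a non-constant element of $\F_p(Z)$. For any such $p$ and any $a\in\F_p$, the equation $j(w)\equiv a\pmod p$ amounts to $q(w)-a\,r(w)\equiv 0\pmod p$, a nonzero polynomial of degree at most $\deg j$, so it has at most $\deg j$ solutions $w\in\F_p$.

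Next I would bring in the relation between $j$-invariants and Frobenius traces. Two elliptic curves over $\F_p$ with the same $j$-invariant are twists of each other, and for $j\neq 0,1728$ the Frobenius trace depends only on the isomorphism class; moreover, by Deuring's theorem the number of $\F_p$-isomorphism classes of elliptic curves with trace $\A(p)$ equals $H(\A(p)^2-4p)$. Denoting by $J_{\A(p),p}$ the finite set of $j$-invariants of such classes, we have $\#J_{\A(p),p}\leq H(\A(p)^2-4p)$ (with equality up to the standard cases of $j=0$ and $j=1728$, whose contribution is absorbed).

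Finally I would assemble the count by breaking the sum up according to the $j$-invariant of the specialised curve $E(w)$:
\begin{align*}
\sum_{\substack{w\in\F_p\\ \Delta(w)\not\equiv 0\\ a_{w,p}=\A(p)}} 1
= \sum_{a\in J_{\A(p),p}} \ \sum_{\substack{w\in\F_p\\ \Delta(w)\not\equiv 0\\ j(w)\equiv a}} 1
\leq (\deg j)\cdot \#J_{\A(p),p}
\leq (\deg j)\,H(\A(p)^2-4p),
\end{align*}
which is the claim. The only genuine subtlety is the preliminary remark that, for sufficiently large $p$, the reduction of $j(Z)$ remains non-constant in $\F_p(Z)$; once this is in place, everything else is a routine combination of the fiber bound with Deuring's theorem, and the proof concludes immediately.
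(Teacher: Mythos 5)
Your proposal is correct and follows essentially the same route as the paper: reduce $j$ mod $p$ and observe it stays non-constant for $p$ large, bound each fiber $j(w)\equiv a$ by $\deg j$, and invoke Deuring to bound the number of relevant $j$-invariants by the Hurwitz class number. If anything, you are slightly more careful than the paper in writing $\#J_{\A(p),p}\leq H(\A(p)^2-4p)$ rather than equality (since distinct isomorphism classes with the same trace can share a $j$-invariant when $a_p=0$ or $j\in\{0,1728\}$), but as the lemma only asserts an upper bound this distinction is immaterial.
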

		\begin{lem}\label{lem:M}
			For some $x_0>0$, we have $p>x_0$ implies
			$$\sum_{\substack{t\in \S(T)\\\Delta(t)\not\equiv 0\pmod p\\a_{t,p}=\A(p)}}1\leq \norm{R_{\S(T),p}}_\infty (\deg j) \H{\A(p)}{p}.$$
			\begin{proof}
				Let $x_0>\P$ be as in \cref{lem:iso}.
				By \cite{ss} we have
				\begin{align*}
							\sum_{\substack{t\in \S(T)\\\Delta(t)\not\equiv 0\pmod p\\a_{t,p}=\A(p)}}1
							&=\sum_{\substack{w\in \F_p\\\Delta(w)\not\equiv 0\pmod p\\a_{w,p}=\A(p)}}R_{\S(T),p}(w)\\
							&\leq \norm{R_{\S(T),p}}_\infty\sum_{\substack{w\in \F_p\\\Delta(w)\not\equiv 0\pmod p\\a_{w,p}=\A(p)}}1\\
							&\leq \norm{R_{\S(T),p}}_\infty (\deg j)
							=\H{\A(p)}{p}.\qedhere
				\end{align*}
			\end{proof}
		\end{lem}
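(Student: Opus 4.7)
The plan is to pass from the sum over $t \in \S(T)$ to a sum over residue classes modulo $p$. The key observation is that both conditions appearing under the summation---namely $\Delta(t) \not\equiv 0 \pmod p$ and $a_{t,p} = \A(p)$---depend only on the reduction $w$ of $t$ modulo $p$, since $a_{t,p}$ is determined by the reduced elliptic curve $E(w)/\F_p$. Consequently, for each $w \in \F_p$ meeting those conditions, the number of $t \in \S(T)$ reducing to $w$ is exactly $R_{\S(T),p}(w)$ by definition. Since a similar grouping has appeared in the work of Sha--Shparlinski \cite{ss}, I would invoke that reference directly to perform the first step.

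After this regrouping, the sum takes the shape
$$\sum_{\substack{w\in \F_p\\\Delta(w)\not\equiv 0\pmod p\\a_{w,p}=\A(p)}} R_{\S(T),p}(w),$$
and one bounds each term by $\norm{R_{\S(T),p}}_\infty$, which then factors out of the summation. The remaining inner sum is precisely the quantity bounded in \cref{lem:iso}, provided $p$ is large enough for the hypothesis of that lemma (namely that $j(Z)\bmod p \in \F_p(Z)\sm \F_p$) to hold. Substituting the bound $(\deg j)\H{\A(p)}{p}$ then yields the claim with no further work.

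The main technical point---though not really an obstacle---is choosing $x_0$ compatibly for both lemmas simultaneously. One needs $x_0$ at least as large as the threshold in \cref{lem:iso}, and also $x_0 > \P$, so that whenever $p > x_0$, every $u/v \in \S(T)$ satisfies $p \nmid v$ and hence defines a valid residue $w \in \F_p$; this ensures the regrouping accounts for every $t$ in the original sum. Both thresholds are finite (the second by \cite{ss}), so taking $x_0 := \max(\P, x_0^{\ref{lem:iso}})$ suffices. There is no deeper difficulty since the argument is essentially a change of indexing followed by a uniform bound and a direct appeal to \cref{lem:iso}.
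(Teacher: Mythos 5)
Your proposal matches the paper's proof step for step: regroup the sum over $t\in\S(T)$ by residue class modulo $p$ (citing \cite{ss}), bound each $R_{\S(T),p}(w)$ by its sup norm, and finish with \cref{lem:iso}, taking $x_0$ large enough for both that lemma and $\P$. One small imprecision: $p>\P$ does not force $p\nmid v$ for every $u/v\in\S(T)$; rather it ensures that whenever $p\mid v$ the prime $p$ divides the conductor, so such $t$ contribute bad reduction and are already excluded from the left-hand sum, making the regrouping exact. This does not affect the overall structure, which is the same as the paper's.
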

		
\section{Basic Bounds\label{sec:basic}}
		\begin{lem}\label{lem:hur}
			For all $\tau\in \Z$, $c>0$, and $0\leq \delta \leq 1/2$ there exists $C_\tau>0$ such that
			\begin{align*}
				\sum_{p\leq x}\frac{\H{\A(p)}{p}}{p}
						&=	\pw{
									C_\tau \pi_{1/2}(x)+O\la(\frac{x^{1/2}}{\log^c x}\ra)					&	\A=\tau,\\
									\frac{8}{3\pi} \frac{x^{1/4}}{\log x}+O\la(\frac{x^{1/4}}{\log^2 x}\ra)	&	\A(p)=\pm \floor{2\sqrt p},\\
									O(x^{1/4+\delta/2}\log_2  x)								&	\A(p)=\pm 2\sqrt{p}+O\la(p^{\delta}\ra),\\
									O(x^{1/2}\log_2  x)									&	\oth.
							}
			\end{align*}
			\begin{proof}
				The case $\A=\tau$ is proved in \cite{dp} with an explicit formula for the constant $C_\tau$.
				The case $\A(p)=\pm \floor{2\sqrt p}$ is proved in \cite{gj}.
				For the other two cases, for $\abs{\A(p)}< 2\sqrt p$ by \cite{g} we have
				\begin{align*}
					\H{\A(p)}{p}		&=2\sum_{\substack{df^2=\A(p)^2-4p\\d\equiv 0,1\pmod 4}}\frac{h(d)}{w(d)}\\
								&=\sum_{\substack{df^2=\A(p)^2-4p\\d\equiv 0,1\pmod 4}}\frac{\sqrt{\abs{d}}}{\pi}L(1,\chi_{d})\\
								&\ll \log p\sum_{\substack{df^2=\A(p)^2-4p\\d\equiv 0,1\pmod 4}}\frac{\sqrt{\abs{\A(p)^2-4p}}}{f}\\
								&\ll (\log p)\sqrt{\abs{\A(p)^2-4p}}\sum_{f\mid \abs{\A(p)^2-4p}}\frac{1}{f}\\
								&\ll (\log p)\sqrt{\abs{\A(p)^2-4p}} \log_2 \abs{\A(p)^2-4p}.
				\end{align*}
				For $\abs{\A(p)}\geq 2\sqrt p$ we have $\H{\A(p)}{p}=0$.
				Then in general we have
				$$\H{\A(p)}{p}\ll p^{1/2}\log p \log_2  p.$$
				If $\A(p)=2\sqrt{p}+O\la(p^{\delta}\ra)$ for some $0\leq \delta \leq 1/2$ we have.
				\begin{align*}
					\H{\A(p)}{p}						&\ll p^{1/4+\delta/2}\log p \log_2  p,\\
					\sum_{p\leq x}\frac{\H{\A(p)}{p}}{p}	
												&\ll \sum_{p\leq x}p^{-3/4+\delta/2}\log p\log_2  p\\
												&\ll x^{1/4+\delta/2}\log_2  x.\qedhere
				\end{align*}
			\end{proof}
		\end{lem}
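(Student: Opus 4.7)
The plan is to dispatch the four cases separately, since the first two require genuine asymptotic formulas with strong error terms while the last two are upper bounds obtainable from a single pointwise estimate on the Hurwitz class number followed by a standard partial summation.

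For $\A \equiv \tau$ constant, the asymptotic
\[
  \sum_{p \leq x} \frac{\H{\tau}{p}}{p} = C_\tau \pi_{1/2}(x) + O\!\left(\frac{x^{1/2}}{\log^c x}\right),
\]
together with an explicit description of the constant $C_\tau$, is the main output of David--Pappalardi in \cite{dp}; I would simply quote it. The extremal case $\A(p) = \pm \floor{2\sqrt{p}}$ with leading constant $8/(3\pi)$ is the analogous asymptotic established in \cite{gj}, which I would also cite directly.

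For the two upper-bound cases, observe first that $\H{\A(p)}{p} = 0$ whenever $|\A(p)| \geq 2\sqrt{p}$, so only $D := \A(p)^2 - 4p < 0$ contributes. Writing $D = df^2$ with $d$ a negative fundamental discriminant and using the Hurwitz decomposition
\[
  \H{\A(p)}{p} = 2\sum_{\substack{df^2 = D \\ d \equiv 0,1 \pmod 4}} \frac{h(d)}{w(d)},
\]
I would apply Dirichlet's class number formula $h(d) = \sqrt{|d|}\,L(1,\chi_d)/\pi$ together with the classical estimate $L(1,\chi_d) \ll \log|d|$ to obtain
\[
  \H{\A(p)}{p} \ll \sqrt{|D|}\, \log p \sum_{f \mid |D|} \tfrac{1}{f} \ll \sqrt{|D|}\, \log p \, \log_2 p,
\]
where the last step uses Gr\"onwall's bound $\sigma(n)/n \ll \log\log n$. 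In the regime $\A(p) = \pm 2\sqrt{p} + O(p^\delta)$ we have $|D| \ll p^{1/2+\delta}$, yielding $\H{\A(p)}{p} \ll p^{1/4+\delta/2} \log p \, \log_2 p$; in the general case $|D| \leq 4p$ gives only $\H{\A(p)}{p} \ll p^{1/2} \log p \, \log_2 p$. Dividing by $p$ and applying partial summation against $\theta(x) = \sum_{p \leq x} \log p \sim x$ (so that $\sum_{p \leq x} p^{-\alpha} \log p \ll x^{1-\alpha}/(1-\alpha)$ for $\alpha < 1$) produces the claimed bounds $O(x^{1/4+\delta/2} \log_2 x)$ and $O(x^{1/2} \log_2 x)$ respectively.

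The main obstacle lies entirely in the two asymptotic cases, whose proofs in the literature require nontrivial sieve-theoretic work on sums of Hurwitz class numbers and would take substantial space to reproduce; for the purposes of this paper the cleanest strategy is to invoke \cite{dp} and \cite{gj} as black boxes. The two upper bound cases, by contrast, are essentially a one-step manipulation once the pointwise bound on $\H{\A(p)}{p}$ is in hand, so I expect them to cause no real difficulty.
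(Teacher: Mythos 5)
Your proposal is correct and follows essentially the same route as the paper: cite \cite{dp} and \cite{gj} for the two asymptotic cases, then for the upper bounds use the Hurwitz decomposition from \cite{g}, the class number formula, $L(1,\chi_d)\ll\log|d|$, and the bound $\sum_{f\mid n}1/f\ll\log_2 n$, finishing by partial summation. One tiny nit: the class number formula you quote, $h(d)=\sqrt{|d|}L(1,\chi_d)/\pi$, omits the factor $w(d)/2$ (it fails for $d=-3,-4$); this is harmless here since the Hurwitz sum carries the compensating $2/w(d)$, so the combination $2h(d)/w(d)=\sqrt{|d|}L(1,\chi_d)/\pi$ used in the paper is what actually enters.
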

			\begin{proof}[Proof of \cref{prop:set}(i)] 
				Let $x_0$ be as in \cref{lem:M}.
				We have
				\begin{align*}
					\sum_{\substack{t\in \S(T)\\\Delta(t)\neq 0}}\pit{t}(\A;x)
							&\leq x_0\#\S(T)+\sum_{x_0<p\leq x}\sum_{\substack{t\in \S(T)\\\Delta(t)\not\equiv 0\pmod p\\a_{t,p}=\A(p)}}1\\
							&\leq x_0\#\S(T)+\sum_{x_0<p\leq x}\norm{R_{\S(T),p}}_\infty (\deg j) \H{\A(p)}{p}\\
							&\ll x_0\#\S(T)+\sum_{x_0<p\leq x}\la[\#\S(T)p^{-1}\H{\A(p)}{p}+F(T)\H{\A(p)}{p}\ra].
				\end{align*}
				The result then follows from \cref{lem:hur} (noting again that $\pi_{1/2}(x)\sim \frac{x^{1/2}}{\log x}$).
			\end{proof}
		\begin{lem}
		    We have
		    \begin{align*}
		        \sum_{\substack{t\in \S(T)\\\Delta(t)\neq 0\\E(t)\text{\;has CM}}}\pit{t}(\A;x)\ll x.
		    \end{align*}
		    \begin{proof}
		        Let $J_{\mathrm{CM}}$ be the set of 13 $j$-invariants of CM elliptic curves over $\Q$.
		        We have
		        \begin{equation*}
		            \sum_{\substack{t\in \S(T)\\\Delta(t)\neq 0\\E(t)\text{\;has CM}}}\pit{t}(\A;x)\leq x\sum_{\substack{t\in \C\\j(t)\in J_{\mathrm{CM}}}}1\leq 13(\deg j)x.\qedhere
		        \end{equation*}
		    \end{proof}
		\end{lem}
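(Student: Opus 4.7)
The plan is to bound the sum by treating each inner $\pi$ trivially and counting how many $t$ in the family can possibly correspond to a CM curve. Since $\pi_{E(t)}(\A;x)\leq \pi(x)\leq x$ for every $t$, the task reduces to bounding the cardinality
\begin{align*}
\#\set{t\in \S(T):\Delta(t)\neq 0,\ E(t)\text{ has CM}}.
\end{align*}

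The key input is the classical fact that an elliptic curve over $\Q$ has CM if and only if its $j$-invariant lies in an explicit set $J_{\mathrm{CM}}$ consisting of exactly $13$ rational numbers (corresponding to the $13$ imaginary quadratic orders of class number one). Thus any $t\in \S(T)$ giving a CM curve $E(t)$ satisfies $j(t)=j_0$ for some $j_0\in J_{\mathrm{CM}}$.

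Because $j(Z)\in \Q(Z)\sm \Q$ is a non-constant rational function of degree $\deg j$, for each fixed $j_0\in J_{\mathrm{CM}}$ the equation $j(t)=j_0$ has at most $\deg j$ solutions in $\C$ (hence in $\S(T)\subset \Q$). Summing over the $13$ possible values of $j_0$ yields at most $13\deg j$ eligible $t$, a constant depending only on the family. Combined with the trivial bound $\pit{t}(\A;x)\leq x$, this gives
\begin{align*}
\sum_{\substack{t\in \S(T)\\\Delta(t)\neq 0\\ E(t)\text{ has CM}}}\pit{t}(\A;x)\leq 13(\deg j)\,x\ll x.
\end{align*}

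There is no real obstacle here; the argument is essentially a one-liner once the finiteness of $J_{\mathrm{CM}}$ over $\Q$ is invoked. The only mild subtlety is to observe that the count of CM-producing $t$ is bounded independently of $T$, which is precisely what allows the factor $\#\S(T)$ to be absent from the final estimate.
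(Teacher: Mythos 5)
Your proof is correct and follows exactly the same route as the paper: bound each $\pit{t}(\A;x)$ trivially by $x$, then count the admissible $t$ by noting that $j(t)$ must land in the $13$-element set $J_{\mathrm{CM}}$ of CM $j$-invariants over $\Q$, each having at most $\deg j$ preimages under the non-constant rational function $j(Z)$. The paper's proof is just a more compressed version of the same one-line argument.
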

		The above bound is smaller that the $F(T)$ terms \cref{prop:set}(i) as long as $F(T)\gg 1$.

		\begin{lem}\label{lem:hur2}
			For $0\leq \delta_1,\delta_2<1/2$ we have
			\begin{align*}
				\sum_{p\leq x}\frac{\H{\A_1(p)}{p}\H{\A_2(p)}{p}}{p^2}
						&=	\pw{
									C_{\tau_1,\tau_2} \log_2  x+O(1)							&	\A_i=\tau_i,\\
									O(1)												&	\A_i(p)=\pm \floor{2\sqrt p},\\
									O(1)												&	\A_i(p)=\pm 2\sqrt{p}+O\la(p^{\delta_i}\ra),\\
									O(\log^2 x\log_2^3 x)								&	\oth,
							}
			\end{align*}
			\begin{align*}
				\sum_{p\leq x}\frac{\H{\A_1(p)}{p}\H{\A_2(p)}{p}}{p}
						&\ll	\pw{
									x												&	\A_i=\tau_i,\\
									x^{1/2}\log_2  x										&	\A_i(p)=\pm \floor{2\sqrt p},\\
									x^{\frac{1+\delta_1+\delta_2}{2}}\log x\log_2^2 x					&	\A_i(p)=\pm 2\sqrt{p}+O\la(p^{\delta_i}\ra),\\
									x \log x\log_2^2 x										&	\oth.
							}
			\end{align*}
			\begin{proof}
				The first case
				$$\sum_{p\leq x}\frac{\H{\tau_1}{p}\H{\tau_2}{p}}{p^2}=C_{\tau_1,\tau_2} \log_2  x+O(1)$$
				is proved in \cite{ap} and by Abel's summation formula we have
				$$\sum_{p\leq x}\frac{\H{\tau_1}{p}\H{\tau_2}{p}}{p}=O\la(x\ra).$$

				If $\A_i(p)=\pm 2\sqrt{p}+O\la(p^{\delta}\ra)$ then by the bound $\H{\A_i(p)}{p}\ll p^{1/4+\delta_i/2}\log p\log_2  p$.
				\begin{align*}
					\sum_{p\leq x}\frac{\H{\A_1(p)}{p}\H{\A_2(p)}{p}}{p^2}	&\ll 1.
				\end{align*}
				If $\A_i(p)=\pm \floor{2\sqrt p}$ then by \cref{lem:hur} we have
				\begin{align*}
					\sum_{p\leq x}\frac{\H{\A_1(p)}{p}\H{\A_2(p)}{p}}{p}	&\ll \sum_{p\leq x}\frac{\H{\A_i(p)}{p}}{p}p^{1/4}\log p\log_2  p\\
														&\ll x^{1/2}\log_2  x.
				\end{align*}
				If $\A_i(p)=\pm 2\sqrt{p}+O\la(p^{\delta_i}\ra)$ then by Abel's summation formula we have
				\begin{align*}
					\sum_{p\leq x}\frac{\H{\A_1(p)}{p}\H{\A_2(p)}{p}}{p}	&\ll \sum_{p\leq x}p^{\frac{-1+\delta_1+\delta_2}{2}}\log^2 p\log_2^2  p\\
														&\ll x^{\frac{1+\delta_1+\delta_2}{2}}\log x\log_2^2 x.
				\end{align*}
				
				Finally, in the general case, we have
				\begin{align*}
					\sum_{p\leq x}\frac{\H{\A_1(p)}{p}\H{\A_2(p)}{p}}{p^2}	&\ll \sum_{p\leq x}p^{-1}\log^2 p\log_2^2 p\\
														&\ll \log^2 x\log_2^3 x,\\
					\sum_{p\leq x}\frac{\H{\A_1(p)}{p}\H{\A_2(p)}{p}}{p}	&\ll \sum_{p\leq x}\log p\log_2  p\\
														&\ll x \log x\log_2^2 x.\qedhere
				\end{align*}
			\end{proof}
		\end{lem}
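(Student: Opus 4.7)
The plan is to split the analysis into the four cases according to how close $\A_i(p)$ is to the Hasse bound $\pm 2\sqrt{p}$, since this controls the size of $|\A_i(p)^2-4p|$ and hence of the Hurwitz class number. From the estimate $\H{\A(p)}{p}\ll \sqrt{|\A(p)^2-4p|}\,\log p\,\log_2 p$ that was already derived in the proof of \cref{lem:hur} via the class number formula and the $L(1,\chi_d)$ bound, I obtain the pointwise bounds $\H{\A_i(p)}{p}\ll p^{1/2}\log p\log_2 p$ in general, $\H{\A_i(p)}{p}\ll p^{1/4}\log p\log_2 p$ when $\A_i(p)=\pm\floor{2\sqrt{p}}$ (since $|\A_i(p)^2-4p|=O(\sqrt p)$ in that regime), and $\H{\A_i(p)}{p}\ll p^{1/4+\delta_i/2}\log p\log_2 p$ when $\A_i(p)=\pm 2\sqrt{p}+O(p^{\delta_i})$.

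For the constant case $\A_i=\tau_i$, the $p^{-2}$-weighted sum is exactly the object whose asymptotic $C_{\tau_1,\tau_2}\log_2 x+O(1)$ is worked out in the joint-Lang--Trotter-on-average paper \cite{ap}; I would simply cite that result. The companion $O(x)$ bound on $\sum_{p\leq x}\H{\tau_1}{p}\H{\tau_2}{p}/p$ then follows by Abel summation from the asymptotic: the boundary term contributes $x(C\log_2 x+O(1))$, and the integral term contributes $-C\int_2^x\log_2 t\,dt = -Cx\log_2 x + O(x/\log x)$, so the $\log_2 x$ pieces cancel and only $O(x)$ survives.

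For the remaining three cases I would substitute the pointwise bounds into the target sums and compute directly. The $p^{-2}$-weighted sums converge to $O(1)$ in cases (ii) and (iii), the latter requiring $\delta_i<1/2$ so that the resulting exponent $3/2-(\delta_1+\delta_2)/2$ exceeds $1$; in case (iv) the exponent drops to $-1$ and a Mertens-type estimate on $\sum_{p\leq x}\log^2 p\log_2^2 p/p$ gives $O(\log^2 x\log_2^3 x)$. The $p^{-1}$-weighted sums in cases (iii) and (iv) then follow by partial summation, producing the claimed $x^{(1+\delta_1+\delta_2)/2}\log x\log_2^2 x$ and $x\log x\log_2^2 x$ respectively.

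The only non-routine step is case (ii) for the $p^{-1}$-weighted sum: using the pointwise bound on both factors would only give $x^{1/2}\log^2 x\log_2^2 x$, which is too weak. The trick is to keep one factor in summed form, invoking the sharper estimate $\sum_{p\leq x}\H{\A_1(p)}{p}/p\ll x^{1/4}/\log x$ from the extremal-prime case of \cref{lem:hur}, while bounding the other factor pointwise by $\H{\A_2(p)}{p}\ll p^{1/4}\log p\log_2 p\ll x^{1/4}\log x\log_2 x$ for $p\leq x$. Multiplying the pointwise majorant out of the sum yields $x^{1/2}\log_2 x$, as required. This asymmetric treatment of the two $\H{\A_i(p)}{p}$ factors is, I expect, the main obstacle and the only genuine manoeuvre in the argument.
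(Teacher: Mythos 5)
Your proposal is correct and follows essentially the same strategy as the paper's proof: citing \cite{ap} for the constant case, using Abel summation to pass from the $p^{-2}$ to the $p^{-1}$ sum, bounding both Hurwitz factors pointwise via the class-number estimates in the remaining cases, and applying the asymmetric treatment (one factor summed via \cref{lem:hur}, the other bounded pointwise) in the extremal $p^{-1}$ case, which you rightly identified as the only non-routine step.
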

			\begin{proof}[Proof of \cref{prop:set}(ii)]
				Let $x_0$ be as in \cref{lem:M}.
				We have
				\begin{align*}
					&\sum_{\substack{t_1,t_2\in \S(T)\\\Delta_1(t_1)\neq 0\neq \Delta_2(t_2)}}\pitt{t_1}{t_2}(\A_1,\A_2;x)\\
							&\;\leq x_0\#\S(T)^2+\sum_{x_0<p\leq x}\sum_{\substack{t_1,t_2\in \S(T)\\\Delta(t_1)\not\equiv 0\not\equiv \Delta(t_2)\pmod p\\a_{1,t,p}=\A_1(p)\\a_{2,t,p}=\A_2(p)}}1\\
							&\;\leq x_0\#\S(T)^2+\sum_{x_0<p\leq x}\norm{R_{\S(T),p}}_\infty^2 (\deg j_1)(\deg j_2) \H{\A_1(p)}{p}\H{\A_2(p)}{p}\\
							&\;\ll \#\S(T)^2+\sum_{x_0<p\leq x}\la(\#\S(T)^2p^{-1}+F(T)p^{-1}\ra)\H{\A_1(p)}{p}\H{\A_2(p)}{p}.
				\end{align*}
				The result then follows from \cref{lem:hur2}.
			\end{proof}
		\begin{lem}
		    We have
		    $$\sum_{\substack{t_1,t_2\in \S(T)\\\Delta_1(t_1)\neq 0\neq \Delta_2(t_2)\\E_1(t_1)\text{\;or\;}E_2(t_2)\text{\;has CM}}}\pitt{t_1}{t_2}(\A_1,\A_2;x)\ll \#\S(T)x^2 \log_2 x+F(T)x^{3/2}\log_2 x.$$
		    \begin{proof}
		        Again let $J_{\mathrm{CM}}$ be the set of 13 $j$-invariants of CM elliptic curves over $\Q$.
		        By \cref{prop:set}(i) we have
		        \begin{align*}
		           & \sum_{\substack{t_1,t_2\in \S(T)\\\Delta_1(t_1)\neq 0\neq \Delta_2(t_2)\\E_1(t_1)\text{\;or\;}E_2(t_2)\text{\;has CM}}}\pitt{t_1}{t_2}(\A_1,\A_2;x)\\
		            &\qquad\leq \la(\sum_{\substack{t_1\in \C\\j(t_1)\in J_{\mathrm{CM}}}}1\ra)\la( \sum_{\substack{t_2\in \S(T)\\\Delta_2(t_2)\neq 0}}\pi_{E_2(t_2)}(\A_2;x)\ra)
		               +\la(\sum_{\substack{t_2\in \C\\j(t_2)\in J_{\mathrm{CM}}}}1\ra)\la( \sum_{\substack{t_1\in \S(T)\\\Delta_1(t_1)\neq 0}}\pi_{E_1(t_2)}(\A_1;x)\ra)\\
		            &\qquad\ll \#\S(T)x^{1/2}\log_2 x+F(T)x^{3/2}\log_2 x.\qedhere
		        \end{align*}
		    \end{proof}
		\end{lem}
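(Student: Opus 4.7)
The plan is to reduce the two-variable sum to a one-variable sum by exploiting the fact that only finitely many $j$-invariants over $\Q$ correspond to CM elliptic curves. Specifically, let $J_{\mathrm{CM}}$ denote the (finite) set of $j$-invariants of elliptic curves over $\Q$ with complex multiplication; there are exactly 13 such values. Since $j_i(Z)\in \Q(Z)\sm \Q$, the preimage $j_i^{-1}(a)\cap \C$ has size at most $\deg j_i$ for each $a\in \C$. Hence the number of $t\in \C$ with $j_i(t)\in J_{\mathrm{CM}}$ is at most $13\deg j_i = O(1)$.

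Next, I would split the sum according to which of the two curves has CM, using the union bound
\[
\mathbf{1}[E_1(t_1)\text{ has CM or }E_2(t_2)\text{ has CM}]\leq \mathbf{1}[E_1(t_1)\text{ has CM}]+\mathbf{1}[E_2(t_2)\text{ has CM}].
\]
For the first piece, trivially $\pitt{t_1}{t_2}(\A_1,\A_2;x)\leq \pi_{E_2(t_2)}(\A_2;x)$, so summing over all $t_1$ with $j_1(t_1)\in J_{\mathrm{CM}}$ contributes at most $O(1)\cdot\sum_{t_2\in \S(T),\,\Delta_2(t_2)\neq 0}\pi_{E_2(t_2)}(\A_2;x)$. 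The analogous statement holds when the roles of $t_1$ and $t_2$ are swapped.

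Finally, I would feed each of these one-variable sums into \cref{prop:set}(i), which gives an upper bound of order $\#\S(T)x^{1/2}\log_2 x + F(T)x^{3/2}\log_2 x$ in the worst ("otherwise") case for $\A_i$, and weaker bounds in the other cases. Adding the two contributions yields the claimed estimate.

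There is no real obstacle here: the argument is essentially bookkeeping, relying on the finiteness of $J_{\mathrm{CM}}$ together with the already-established \cref{prop:set}(i). The only mild subtlety is noting that we may pick whichever sequence $\A_i$ gives the largest one-variable bound (the "$\oth$" case), since this is an upper bound valid for any choice of prime-indexed sequences.
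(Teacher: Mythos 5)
Your argument is the same as the paper's: bound the number of $t$ with $j_i(t)\in J_{\mathrm{CM}}$ by $13\deg j_i = O(1)$, split via the union bound over which curve has CM, drop the constraint on the CM curve to reduce to a one-variable sum, and apply Theorem~\ref{prop:set}(i) in the worst (``otherwise'') case. Your derived bound $\#\S(T)x^{1/2}\log_2 x + F(T)x^{3/2}\log_2 x$ matches the paper's proof (the lemma statement's $x^2$ factor is apparently a weaker, non-tight exponent), so the proposal is correct and essentially identical in approach.
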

		Again, the above is smaller than the $\#\S(T)F(T)$ and $F(T)^2$ terms in \cref{prop:set}(ii) if $F(T)\gg 1$.
		\begin{lem}\label{lem:isogenous}
		    \begin{align*}
		       &\sum_{\substack{t_1,t_2\in \S(T)\\\Delta_1(t_1)\neq 0\neq \Delta_2(t_2)\\E(t_1)\sim  E(t_2)\text{\;over\;}\bar{\Q}}}\pitt{t_1}{t_2}(\A_1,\A_2;x)\\
		        &\qquad\ll W(T)^8 \log W(T)\la[\#\S(T) x^{1/2}\log_2 x+F(T) x^{3/2}\log_2 x\ra].
		    \end{align*}
		    where 
		    $$W(T)=1+\log\max_{\substack{\frac{u}{v}\in \S(T)\\(u,v)=1}}\max(\abs{u},\abs{v}).$$
		    \begin{proof}
		       This is a modified version of the proof of \cite[Lemma 1]{fm2}.
		       For a curve $E:y^2=x^3+\alpha x+\beta$ with $\alpha,\beta\in \Q$ the logarithmic Weil height of $E$ is
		       $$w(E)=\max(1,h(4\alpha),h(4\beta))$$
		       where $h(a/b)$ for $(a,b)=1$ is given by
		       $$h(a/b)=\log\max(\abs{a},\abs{b}).$$
		       By \cite{mw}, if $E_1(t_1)$ and $E_2(t_2)$ are isogenous over $\bar \Q$ there exists an cyclic isogeny of degree $n\leq cw(E_i(t_i))^4$ for some absolute constant $c$.
		       Such an isogeny exists if and only if $\Phi_n(j_1(t_1),j_2(t_2))=0$ where $\Phi_n(X,j)$ has degree $\psi(n)=n\prod_{p\mid n}\la(1+\frac 1p\ra)=O(n\log 2n)$.
		       For any $t_2\in \S(T)$ we have
		       \begin{align*}
		           \#\set{t_1\in \S(T):E_1(t_1)\sim E_2(t_2)\text{\;over\;}\bar \Q}
		           &\leq \sum_{d\leq cw(E_2(t_2))^4}\sum_{\substack{t_1\in \C\\\Phi_n(j_1(t_1),j_2(t_2))=0}}1\\
		           &\ll w(E_2(t_2))^8 \log(2w(E_2(t_2))^4).
		       \end{align*}
		       A similar bound holds if we fix $E_1(t_1)$ and count the number of isogenous $E_2(t_2)$ for $t_2\in \S(T)$.
		       We have
		       \begin{align*}
		           w(E_2(u/v))
		           \ll \max\set{1,\log \max(\abs{u},\abs{v})^{\max(\deg f_2,\deg g_2)}}
		           \ll W(T).
		       \end{align*}
		       The same bound holds for $w(E_1(t_1))$ (though the constant depends on the polynomials $f_i,g_i$).
		       By \cref{prop:set}(i) we have
		       \begin{align*}
		           &\sum_{\substack{t_1,t_2\in \S(T)\\\Delta_1(t_1)\neq 0\neq \Delta_2(t_2)\\E(t_1)\sim E(t_2)\text{\;over\;}\bar{\Q}}}\pitt{t_1}{t_2}(\A_1,\A_2;x)\\
		           &\qquad\ll W(T)^8 \log W(T)\sum_{\substack{t_2\in \S(T)\\\Delta_2(t_2)\neq 0}}\pi_{E_2(t_2)}(\A_2;x)\\
		           &\qquad\ll W(T)^8 \log W(T)\la[\#\S(T) x^{1/2}\log_2 x+F(T) x^{3/2}\log_2 x\ra].\qedhere
		       \end{align*}
		    \end{proof}
		\end{lem}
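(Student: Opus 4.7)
The plan is to decouple the double sum by fixing one parameter (say $t_2$), bounding the number of $t_1\in\S(T)$ for which $E_1(t_1)$ is isogenous to $E_2(t_2)$ over $\bar\Q$, and then applying the one-curve estimate from \cref{prop:set}(i) to the remaining sum. This mirrors the strategy of \cite[Lemma 1]{fm2}, adapted to the parametric family $\S(T)$.

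For the inner count I would invoke the Masser--Wüstholz isogeny theorem \cite{mw}: if $E_1(t_1)$ and $E_2(t_2)$ are isogenous over $\bar\Q$, then there exists a cyclic isogeny of degree $n\leq c\, w(E_2(t_2))^4$, where $w(E)$ is the logarithmic Weil height. A cyclic $n$-isogeny between two curves is detected by their $j$-invariants satisfying the classical modular polynomial equation $\Phi_n(X,Y)=0$, which has degree $\psi(n)=n\prod_{p\mid n}(1+1/p)$ in each variable. For fixed $t_2$ and fixed $n$, the equation $\Phi_n(j_1(t_1),j_2(t_2))=0$ has at most $\psi(n)\deg j_1$ solutions $t_1\in\C$; summing over $n\leq c\, w(E_2(t_2))^4$ and using the trivial bound $\sum_{n\leq Y}\psi(n)\ll Y^2\log Y$ gives
\[
\#\set{t_1\in\S(T):E_1(t_1)\sim E_2(t_2)\text{ over }\bar\Q}\ll w(E_2(t_2))^8\log w(E_2(t_2)).
\]

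For $u/v\in\S(T)$ with $(u,v)=1$, the Weil height of $E_i(u/v)$ is controlled by $\log\max(\abs{u},\abs{v})$ raised to a power depending only on $\deg f_i$ and $\deg g_i$, so $w(E_i(u/v))\ll W(T)$ with an implied constant absorbing the polynomial dependence. Pulling the resulting uniform bound $W(T)^8\log W(T)$ outside the $t_2$-sum and invoking the ``otherwise'' case of \cref{prop:set}(i) on $\sum_{t_2\in\S(T),\,\Delta_2(t_2)\neq 0}\pi_{E_2(t_2)}(\A_2;x)$ produces the desired estimate. The main technical point is threading the height and degree dependence cleanly through Masser--Wüstholz and the modular polynomial: the factor $W(T)^8\log W(T)$ must emerge from the degree-four height bound on isogeny degrees combined with the near-linear growth of $\psi$, without picking up extra dependence on $f_i,g_i$ beyond the implied constant, after which the reduction to the single-curve estimate is routine.
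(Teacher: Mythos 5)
Your proposal is correct and follows essentially the same path as the paper's proof: decouple the double sum by fixing $t_2$, invoke Masser--Wüstholz to bound the cyclic isogeny degree by $c\,w(E_2(t_2))^4$, count solutions of $\Phi_n(j_1(t_1),j_2(t_2))=0$ via the degree $\psi(n)$ of the modular polynomial, sum over $n$ to obtain the $w^8\log w$ bound, control $w(E_i(u/v))$ by $W(T)$, and finish with the single-curve estimate of Theorem \ref{prop:set}(i).
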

		For the sets in \cref{sec:ex} we have $W(T)\ll \log T$, however this sum over isogenous pairs becomes significant compared to the bounds \cref{prop:set}(ii) for families such as $\S(T)=\set{1,2,4,8,\dots,2^T}$.

\section{Asymptotics for Special Families\label{sec:asym}}

	\subsection{Near-Permutation Rational Families\label{sec:perm}}
		\begin{defn}
			Let $q(Z)\in \F_p(Z)$ and let $S$ be its set of singularities in $\F_p$. Then
			$q$ is called a near-permutation rational function for $p$ if $q:\F_p\sm S\to \F_p$ is injective.
		\end{defn}
		Examples of near-permutation rational functions are $Z^3\in \F_p(Z)$ if $p\equiv 2\pmod 3$ or $\frac{aZ+b}{cZ+d}\in \F_p(Z)$ if $ad-bc\not\equiv 0\pmod p$.
		\begin{lem}\label{lem:isoperm}
			If $j(Z)\mod p$ is a near-permutation rational function for all $p\equiv \congval \pmod \modval$ for some $(\congval,\modval)=1$ then $\sum_{\substack{w\in \F_p\\\Delta(w)\not\equiv 0\pmod p\\a_{w,p}=\tau}}1= \H{\tau}{p}+O(1)$.
			\begin{proof}
				Let $\jp{\tau}$ be the set of $j$-invariants of the isomorphism classes of elliptic curves over $\mathbb{F}_p$ with Frobenius trace $\tau$.
				We have
				\begin{align*}
					\sum_{\substack{w\in \F_p\\\Delta(w)\not\equiv 0\pmod p\\a_{w,p}=\tau}}1	
						&=O(1)+\sum_{\substack{w\in \F_p\\j(w)\mod p\in \jp{\tau}}}1\\
						&=O(1)+\sum_{a\in \jp{\tau}}\sum_{\substack{w\in \F_p\\j(w)\equiv a\pmod p}}1\\
						&=O(1)+\sum_{a\in \jp{\tau}}1\\
						&=O(1)+ \H{\tau}{p}.\qedhere
				\end{align*}
			\end{proof}
		\end{lem}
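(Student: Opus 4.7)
The plan is to rewrite the sum over $w \in \F_p$ as a sum indexed by $j$-invariants of trace-$\tau$ curves, and then to exploit the near-permutation hypothesis to conclude that each fibre of $j$ contributes essentially one element.

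First I would recall the standard identification (also used in the proof of Lemma~\ref{lem:iso}) that the set $\jp{\tau}$ of $j$-invariants of $\F_p$-isomorphism classes of elliptic curves with Frobenius trace $\tau$ has cardinality $\H{\tau}{p}$. Any $w$ with $a_{w,p}=\tau$ automatically satisfies $j(w)\in\jp{\tau}$, so the count partitions naturally by $j$-value. Second, I would invoke the near-permutation hypothesis: for $p\equiv\congval\pmod{\modval}$ large enough, $j(Z)\bmod p$ has $\deg j = O(1)$ singularities $S$ and is injective on $\F_p\setminus S$. Hence for each $a\in\jp{\tau}$ there is at most one $w\in\F_p\setminus S$ with $j(w)\equiv a\pmod p$, and the image $j(\F_p\setminus S)$ misses at most $O(1)$ elements of $\F_p$. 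Putting these observations together gives
\[
\sum_{\substack{w\in\F_p\\ \Delta(w)\not\equiv 0\pmod p\\ a_{w,p}=\tau}} 1 = \sum_{a\in\jp{\tau}} \#\{w\in\F_p\setminus S : j(w)\equiv a\pmod p\} + O(1) = \#\jp{\tau}+O(1) = \H{\tau}{p}+O(1).
\]

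The main subtlety I would watch for is the twist ambiguity: two $\F_p$-isomorphism classes sharing a $j$-invariant can have traces $\tau$ and $-\tau$ respectively, so it is not \emph{a priori} clear that the unique preimage $w$ of each $a\in\jp{\tau}$ produces the specific curve with trace $\tau$ rather than $-\tau$. Because quadratic twists share $j$-invariants, $\jp{\tau}$ and $\jp{-\tau}$ coincide as sets of $j$-invariants, so the cardinality $\H{\tau}{p}$ on the right is robust to this ambiguity; nevertheless, some care is needed to ensure that any discrepancy in matching $w$ to the correct sign of trace contributes only $O(1)$ to the error. I expect this bookkeeping, rather than the near-permutation count itself, to be the main work of the proof.
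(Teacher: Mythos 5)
Your proposal tracks the paper's proof essentially line by line: both reindex the $w$-count by the $j$-invariants in $\jp{\tau}$, both use the near-permutation hypothesis to conclude that each $a\in\jp{\tau}$ has exactly one preimage $w$ up to an $O(1)$ set of exceptions (singularities of $j$ and elements outside the image), and both finish by identifying $\#\jp{\tau}=\H{\tau}{p}+O(1)$.

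The interesting divergence is that you flag the quadratic-twist ambiguity as remaining bookkeeping, while the paper's proof does not address it at all. The first line of the paper's chain replaces the count of $w$ with $a_{w,p}=\tau$ by the count of $w$ with $j(w)\bmod p\in\jp{\tau}$ and asserts the difference is $O(1)$; that is exactly the step your concern lands on. For $\tau\ne 0$ the set $\{w\in\F_p:j(w)\in\jp{\tau}\}$ also contains every $w$ for which $E(w)$ happens to be the quadratic twist with trace $-\tau$, and the near-permutation hypothesis alone says nothing about how the single preimage of each $a\in\jp{\tau}$ distributes between the two twists; nothing in either argument controls those terms, and generically one expects roughly half of the preimages to land on the wrong twist, a quantity of order $\H{\tau}{p}$, not $O(1)$. (For $\tau=0$ both twists are supersingular, so the replacement is harmless.) So the bookkeeping you anticipated is not carried out in the paper either: your diagnosis of where the difficulty lies is sound, your proposal is no less rigorous than the target, but neither text closes this point, and any complete proof would have to say something about the twist class of $E(w)$ as $w$ ranges over the preimages of $\jp{\tau}$.
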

		Note that 
		\begin{align*}
			\sum_{p\leq x}\frac{\H{\A(p)}{p}}{p^2}				&\ll \sum_{p\leq x}\frac{p^{1/2}\log^2 p}{p^2}\ll 1,\\
			\sum_{p\leq x}\frac{\H{\A_1(p)}{p}\H{\A_2(p)}{p}}{p^3}		&\ll \sum_{p\leq x}\frac{p\log^4 p}{p^3}\ll 1.
		\end{align*}

		\begin{proof}[Proof of Theorem \ref{prop:setconstlin}(i)]
				Since $\deg j=1$, $j(Z)\mod p$ is near-permutation for all $p>x_0$ for some $x_0$.
				Following \cref{prop:set}(i) with \cref{lem:hur} and \cref{lem:isoperm} we have 
				\begin{align*}
					\sum_{\substack{t\in \S(T)\\\Delta(t)\neq 0}}\pit{t}(\tau;x)
							&= O(\#\S(T))+\sum_{x_0<p\leq x}\la(\#\S(T) p^{-1}+O(\#\S(T)p^{-2}+F(T))\ra) (\H{\tau}{p}+O(1))\\
							&=\#\S(T) C_{\tau} \pi_{1/2}(x)+O\la( \#\S(T)\frac{x^{1/2}}{\log^c x}\ra)+O\la(F(T)\frac{x^{3/2}}{\log x}\ra).\qedhere
				\end{align*}
			\end{proof}
		
\begin{proof}[Proof of Theorem \ref{prop:set2constlin}(ii)]
Since $\deg j_1=\deg j_2=1$, $j_1(Z),j_2(Z)\mod p$ are near-permutation for all $p>x_0$ for some $x_0$.
Following \cref{prop:set}(ii) with Lemmas \ref{lem:hur}, \ref{lem:hur2}, and \ref{lem:isoperm} we have
\begin{align*}
&\sum_{\substack{t_1,t_2\in \S(T)\\\Delta_1(t_1)\neq 0\neq \Delta_2(t_2)}}\pitt{t_1}{t_2}(\tau_1,\tau_2;x)\\
&\qquad	= O(\#\S(T)^2)+\sum_{x_0<p\leq x}\la(\#\S(T) p^{-1}+O(\#\S(T)p^{-2}+F(T))\ra)^2\\
&\qquad\qquad\qquad\qquad\qquad\qquad\qquad\qquad \times (\H{\tau_1}{p}+O(1))(\H{\tau_2}{p}+O(1))\\
&\qquad= \#\S(T)^2 C_{\tau_1,\tau_2}\log_2  x	+	O\la(	\#\S(T)^2	+	F(T)\#\S(T) x	+	F(T)^2 x^2		\ra).\qedhere
\end{align*}
\end{proof}

\begin{proof}[Proof of Theorem \ref{prop:setextremallin}]
Since $\deg j=1$, $j(Z)\mod p$ is near-permutation for all $p>x_0$ for some $x_0$.
Following \cref{prop:set}(i) with \cref{lem:isoperm}, we have
\begin{align*}
&\sum_{\substack{t\in \S(T)\\\Delta(t)\neq 0}}\pit{t}(\A;x)\\
							&\qquad= O(\#\S(T))+\sum_{x_0<p\leq x}\la(\#\S(T) p^{-1}+O(\#\S(T)p^{-2}+F(T))\ra) (\H{\floor{2\sqrt p}}{p}+O(1))\\
							&\qquad= \#\S(T)\frac{8x^{1/4}}{3\pi\log x}+O\la(\#\S(T)\frac{x^{1/4}}{\log^2 x}+F(T)\frac{x^{5/4}}{\log x}\ra).\qedhere
				\end{align*}
			\end{proof}
		
			If $F(T)=o(x^{-1}\#\S(T))$ then the formulas in Theorems \ref{prop:setconstlin}(i) and \ref{prop:setextremallin} are asymptotic.
			If $F(T)=o(x^{-1}\sqrt{\log_2  x}\#\S(T))$ then the formula in \cref{prop:set2constlin}(ii) is asymptotic.

		Let $(\congval,\modval)=1$ and let $\pi_{E(t)}(\A;x,\congval,\modval)$ be the number of $p\equiv \congval \pmod \modval$ such that $p\leq x$ and $\A(p)=a_{t,p}$.
		Define $\pi_{E(t_1),E(t_2)}(\A_1,\A_2;x,\congval,\modval)$ similarly.
		\begin{prop}\label{prop:avgHcong}
			There exists $C_{\tau,\congval,\modval}>0$ such that for all $c>0$,
			$$\sum_{\substack{p\leq x\\p\equiv \congval\pmod\modval}}\frac{\H{\tau}{p}}{p} = C_{\tau,\congval,\modval}\pi_{1/2}(x)+O\la(\frac{x^{1/2}}{\log^c x}\ra).$$
			\begin{proof}
			    Follows immediately from \cite[Proposition 2.1]{jcong}
			\end{proof}
		\end{prop}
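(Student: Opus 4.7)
The plan is to invoke \cite[Proposition 2.1]{jcong} essentially as a black box, since that reference establishes exactly this asymptotic for $\sum_{p \leq x,\, p \equiv \congval \pmod \modval} H(\tau^2 - 4p)/p$ when $(\congval, \modval) = 1$. The statement here is simply the specialization of that result to our notation $\H{\tau}{p} = H(\tau^2 - 4p)$, so no independent argument is needed. For this reason I would keep the proof a one-line citation, as in the displayed text.

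Were one to reprove it from scratch, I would follow the David--Pappalardi template that underlies the unconditional case of Lemma \ref{lem:hur} and insert the congruence condition at each step. Concretely, the first step is to rewrite $\H{\tau}{p}$ using the classical identity $H(d) = \frac{\sqrt{|d|}}{\pi} L(1,\chi_d)$ (with the standard correction at $d = 0, -3, -4$), expand $L(1,\chi_d)$ by truncation, and swap the order of summation so that the inner sum runs over primes $p \leq x$ with $p \equiv \congval \pmod \modval$ satisfying a fixed Kronecker-symbol condition on $\tau^2 - 4p$. Next, interpret this condition as a union of congruence classes modulo $4 \modval \ell$ for the auxiliary moduli $\ell$ appearing in the truncation and apply Siegel--Walfisz for primes in arithmetic progressions; the compatibility of $(\congval,\modval)$ with the combined modulus is routine since $(\congval,\modval)=1$.

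The resulting main term takes the same shape as in Lemma \ref{lem:hur} but with an extra factor reflecting the density $1/\varphi(\modval)$ of the progression, together with a product of local factors depending on $(\tau, \congval, \modval)$. Gathering this, the main term is $C_{\tau,\congval,\modval}\pi_{1/2}(x)$ with $C_{\tau,\congval,\modval} > 0$, and the error $O(x^{1/2}/\log^c x)$ follows from the Siegel--Walfisz savings, uniform in $c$.

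The main obstacle in a from-scratch treatment would be uniformity: tracking how the Siegel--Walfisz error interacts with the truncation level of $L(1, \chi_d)$ and controlling the tail contributions so that the error does not exceed $x^{1/2}/\log^c x$ for arbitrary $c > 0$. However, this obstacle has already been handled in \cite{jcong}, so for our purposes the proposition reduces to a reference.
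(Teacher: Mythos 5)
Your proposal matches the paper's proof exactly: both simply cite \cite[Proposition 2.1]{jcong}, which establishes this asymptotic directly. The additional sketch of a from-scratch argument via the $L(1,\chi_d)$ expansion and Siegel--Walfisz is a reasonable outline of what underlies the cited result, but since you ultimately keep the one-line citation, the two proofs coincide.
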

		\begin{prop}\label{prop:avgHHcong}
			If $2\nmid \tau$, there exists $C_{\tau,\tau,\congval,\modval}>0$ such that
			$$\sum_{\substack{x_0<p\leq x\\p\equiv \congval\pmod\modval}}\frac{\H{\tau}{p}^2}{p^2} = C_{\tau,\tau,\congval,\modval}\log_2  x+O(1).$$
		\end{prop}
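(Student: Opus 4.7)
The strategy parallels the proof of the unconditional statement (the $\A_i = \tau_i$ case of \cref{lem:hur2}) given in \cite{ap}, with the additional ingredient being the congruence condition $p \equiv \congval \pmod{\modval}$. The plan is: (1) convert $\H{\tau}{p}$ into an $L$-value via the class number formula; (2) square the resulting expression and expand $L(1,\chi)^2$ as a Dirichlet series; (3) swap the order of summation so that the inner sum runs over primes in an arithmetic progression of a Kronecker symbol in $p$; and (4) evaluate this inner sum using quadratic reciprocity together with Siegel--Walfisz, picking out the principal-character contribution to recover the $\log_2 x$ main term.

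Concretely, for odd $\tau$ and $p > \tau^2/4$ the discriminant $\tau^2 - 4p$ is $\equiv 1 \pmod 4$, so the class number formula (as used in the proof of \cref{lem:hur}) gives
\[
\H{\tau}{p} = \frac{\sqrt{4p - \tau^2}}{\pi} \sum_{\substack{f \text{ odd} \\ f^2 \mid (4p-\tau^2)}} \frac{L(1, \chi_{(\tau^2-\!4p)/f^2})}{f}.
\]
Squaring and dividing by $p^2$, the dominant contribution is $(4/\pi^2)\,L(1, \chi_{\tau^2 - 4p})^2 / p$, with terms coming from $f_1 f_2 > 1$ handled by standard divisor estimates. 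Expanding $L(1,\chi_{\tau^2-4p})^2 = \sum_{m,n \geq 1} \chi_{\tau^2-4p}(mn)/(mn)$ and interchanging summation reduces the problem to estimating, for each pair $(m,n)$,
\[
\sum_{\substack{p \leq x \\ p \equiv \congval \pmod{\modval}}} \frac{\chi_{\tau^2-4p}(mn)}{p}.
\]

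Quadratic reciprocity, with $2 \nmid \tau$ used precisely to keep $\tau^2 - 4p$ in the residue class $\equiv 1 \pmod 4$ so the flip is clean, rewrites $\chi_{\tau^2 - 4p}(mn)$ as a fixed Dirichlet character in $p$ of modulus dividing $4mn$. Combining it with $p \equiv \congval \pmod{\modval}$ yields a character modulo $\mathrm{lcm}(4mn,\modval)$ which is principal exactly when $mn$ is a perfect square (outside of a bounded bad set of $mn$). Siegel--Walfisz then contributes a negligible amount in the nonprincipal case after partial summation, while the principal case produces a main term of size $\frac{1}{\phi(\mathrm{lcm}(4mn,\modval))} \log_2 x$ times a correction depending on $\congval$. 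Summing the principal contributions over $mn = \ell^2$ assembles an absolutely convergent Euler product, yielding the desired $C_{\tau,\tau,\congval,\modval} \log_2 x$ with the constant manifestly positive (being a sum of squares of character values weighted by positive densities).

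The main obstacle is the uniformity in $mn$: Siegel--Walfisz degrades as the modulus $4mn$ grows, so one must truncate the double sum at some $mn \leq x^{\eta}$ and control the tail by the trivial bound $L(1,\chi_{\tau^2-4p}) \ll \log p$, together with a crude estimate for $\sum_{mn > x^\eta} (mn)^{-1}\chi_{\tau^2-4p}(mn)$ via Pólya--Vinogradov. This is the standard technical hurdle in \cite{fm,ap}-type arguments, and the extra congruence constraint on $p$ only modifies the density constants without altering the structure of the estimates, so the whole argument carries over from the proof of \cref{lem:hur2}(i) with routine bookkeeping.
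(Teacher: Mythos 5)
The high-level plan — class number formula, Dirichlet-series expansion of $L^2$, swap of summation, isolate the principal contribution for primes in a residue class — is indeed the same route the paper takes, following Akbary--David--Juricevic \cite{adj} and adapting it to the extra congruence $p\equiv\congval\pmod\modval$. But two of your steps do not survive scrutiny.

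First, Siegel--Walfisz plus P\'olya--Vinogradov is not strong enough to close the analytic estimate. The moduli that arise are of the form $[\modval,mf^2,ng^2]$, and after truncating the $L$-series you must control primes in arithmetic progressions to moduli far beyond the $\log^A x$ range where Siegel--Walfisz applies. Moreover, if you try to use P\'olya--Vinogradov to bound the tail $\sum_{mn>x^\eta}(mn)^{-1}\chi_{\tau^2-4p}(mn)$, the conductor of $\chi_{\tau^2-4p}$ is of size $\asymp p$, so the tail is only $O(p^{1/2+o(1)}/x^\eta)$, which is useless for $\eta<1/2$ and thus leaves a gap between the Siegel--Walfisz range and the range P\'olya--Vinogradov covers. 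The paper, like every reference in this lineage, uses the Barban--Davenport--Halberstam theorem, together with an exponential smoothing $e^{-mn/U}$, to average the progression error $\abs{E(x;[\modval,mf^2,ng^2],\theta^*)}$ over moduli up to roughly $x/\log^A x$. That replacement is not a routine bookkeeping change: it is the reason the argument works unconditionally.

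Second, you correctly record the sum over odd $f$ with $f^2\mid(4p-\tau^2)$, but you then dispatch all $f_1f_2>1$ terms to ``standard divisor estimates.'' These are not error terms: after interchanging sums they become the outer variables $f,g$ in the paper's expression $\sum_{f,g\leq 2\sqrt{x}}\frac{1}{fg}\sum_{p\in S_{f,g}(x),\ p\equiv\congval}\cdots$, and the whole four-variable sum over $f,g,m,n$ builds the Euler product defining $C_{\tau,\tau,\congval,\modval}$. The local factors $\Lambda_{\tau,\tau,\congval,\modval}(p)$ arising from the multiplicative coefficients $c^{\tau,\congval,\modval}_{f,g}(m,n)$ (Lemma \ref{lem:cstar2}) depend delicately on $v_p(\modval)$ and on $v_p(\tau^2-4\congval)$, and positivity of the constant has to be read off from those explicit local evaluations rather than from a heuristic ``sum of squares of character values.'' Your outline is in the right spirit, but the congruence-adapted coefficients and their Euler factors are precisely where the paper's new work lies, and the $f$-sum must stay in the main term from the start.
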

		The proof of \cref{prop:avgHHcong} is given in Section \ref{sec:res}. 

			\begin{proof}[Proof of Theorem \ref{prop:perm}](i)
				The proof is identical to that of \cref{prop:setconstlin}(i) except for the use of \cref{prop:avgHcong} instead of \cref{lem:hur}
			\end{proof}
		
			\begin{proof}[Proof of Theorem \ref{prop:perm2}](ii)
				The proof is identical to \cref{prop:set2constlin}(ii) except for the use of \cref{prop:avgHHcong} instead of \cref{lem:hur2}.
			\end{proof}

			Note that in all of the results in this section we need $F(T)=o(x^{-1}\#\S(T))$ for the single curve formulas to be asymptotic and $F(T)=o(x^{-1}\sqrt{\log_2  x}\#\S(T))$ for the two-curve formulas to be asymptotic.
			Furthermore, combining Theorems \ref{prop:perm}(i) and \ref{prop:perm2}(ii) with Propositions \ref{prop:set}(i) and \ref{prop:set}(ii), if $2\nmid \tau$ and $F(T)=o(x^{-1}\#\S(T))$ we have
			\begin{align*}
				\sum_{\substack{t\in \S(T)\\\Delta_1(t)\neq 0}}\pit{t}(\tau;x)								&=\Theta\la(\#\S(T)\frac{x^{1/2}}{\log x}\ra),\\
				\sum_{\substack{t_1,t_2\in \S(T)\\\Delta_1(t_1)\neq 0\neq \Delta_2(t_2)}}\pitt{t_1}{t_2}(\tau,\tau;x)		&=\Theta\la(\#\S(T)^2 \log_2  x\ra).
			\end{align*}

	\subsection{Exponential coefficient family}\label{sec:exp}
		For any $h(Z)\in \Z[Z]$, odd positive $m,n$, and any function $j^*(Z)$ let
		\begin{align*}
			f(Z)			&=-3j^*(Z)^{n}(j^*(Z)-1728)^{m}h(Z)^2,\\
			g(Z)			&=2j^*(Z)^{\frac{3n-1}{2}}(j^*(Z)-1728)^{\frac{3m+1}{2}}h(Z)^3.
		\end{align*}
		We have
		\begin{align*}
			j(Z)		&=j^*(Z)\\
			\Delta(Z)	&=1728^2 h(Z)^6j^*(Z)^{3n-1}(j^*(Z)-1728)^{3m}.
		\end{align*}
		\begin{lem}\label{lem:isolam}
			Suppose $b\in \Z\sm \set{0}$ and $j^*(Z)=Zb^Z$ with $f,g$ in the above form.
			For $p>6b$ we have
			$$\sum_{\substack{0\leq w< p(p-1)\\\Delta(w)\not\equiv 0\pmod p\\a_{w,p}=\tau}}1=(p-1)\H{\tau}{p}+O(p).$$
			\begin{proof}
				Note that for $p\nmid 6b$, the roots of $\Delta(Z)\mod p$ are the union of roots of $h(Z),j^*(Z),(j^*(Z)-1728)\mod p$ as subsets of $\Z/p(p-1)\Z$.
				Then there are $O(p)$ roots of $\Delta(Z)\mod p$.
				Note that $b^{w_1}\equiv b^{w_2}\pmod p$ if $w_1\equiv w_2\pmod {p-1}$.
				By the Chinese Remainder Theorem we have
				\spanner{\begin{align*}
					\sum_{\substack{0\leq w< p(p-1)\\\Delta(w)\not\equiv 0\pmod p\\a_{w,p}=\tau}}1	
							&=O(p)+\sum_{a\in \jp{\tau}}\sum_{\substack{0\leq w< p(p-1)\\w b^w\equiv a\pmod p}}1\\
							&=O(p)+\sum_{a\in \jp{\tau}}\sum_{0\leq u< p-1}\sum_{\substack{0\leq v< p\\vb^{u}\equiv a\pmod p}}1\\
							&=O(p)+\sum_{a\in \jp{\tau}}\sum_{0\leq u< p-1}1\\
							&=O(p)+(p-1)\H{\tau}{p}.\qedhere
				\end{align*}}
			\end{proof}
		\end{lem}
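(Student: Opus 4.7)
The plan is to mirror the proof of \cref{lem:isoperm}, with the Chinese Remainder Theorem in place of the near-permutation hypothesis. For $p > 6b$, the base $b$ is a unit mod $p$, so by Fermat's little theorem $b^w \bmod p$ depends only on $w \bmod (p-1)$. Under the CRT isomorphism $\Z/p(p-1)\Z \cong \Z/p\Z \times \Z/(p-1)\Z$, writing $w \leftrightarrow (v,u)$ with $v \equiv w \pmod{p}$ and $u \equiv w \pmod{p-1}$, the reduction $j^*(w) \bmod p = w b^w \bmod p$ becomes simply $v b^u \bmod p$. In particular, for any fixed $a \in \F_p^{\times}$ and any of the $p-1$ choices of $u$, the equation $v b^u \equiv a \pmod p$ has a unique solution $v$, so the fibre over $a$ of $j^* \bmod p$ in $\Z/p(p-1)\Z$ has size exactly $p-1$.

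I would then bound the contribution from $w$ with $\Delta(w) \equiv 0 \pmod p$. The factorisation $\Delta(Z) = 1728^2 h(Z)^6 j^*(Z)^{3n-1}(j^*(Z)-1728)^{3m}$ shows that the bad $w$ come from three sources: zeros of $h \bmod p$ (giving at most $\deg h \cdot (p-1)$ pairs, since for each root $v$ of $h$ any $u$ is allowed), the condition $v \equiv 0$ (zeros of $j^*$, giving $p-1$ pairs), and the condition $v b^u \equiv 1728 \pmod p$ (giving $p-1$ pairs, one per $u$). Altogether, $\#\{w : \Delta(w) \equiv 0 \pmod p\} = O(p)$, which will be absorbed into the error term.

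For the remaining $w$, I would invoke the identification used in \cref{lem:iso} and \cref{lem:isoperm}: the trace condition $a_{w,p} = \tau$ is controlled, up to a bounded twist discrepancy per $j$-invariant, by the condition $j^*(w) \bmod p \in \jp{\tau}$, where $|\jp{\tau}| = \H{\tau}{p}$. Combining this with the fibre count $p-1$ above gives
\[
\sum_{\substack{0 \leq w < p(p-1) \\ \Delta(w) \not\equiv 0 \pmod p \\ a_{w,p} = \tau}} 1 = \sum_{a \in \jp{\tau}} (p - 1) + O(p) = (p - 1) \H{\tau}{p} + O(p).
\]
The main point of care, and the expected obstacle, is this final translation: twists of a curve sharing the same $j$-invariant can contribute different Frobenius traces, but the discrepancy is $O(1)$ per $j$-value, and since $\H{\tau}{p} = O(\sqrt{p} \log p)$ the total error stays comfortably within $O(p)$.
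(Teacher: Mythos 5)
Your proposal follows the paper's proof closely: the same CRT decomposition of $\Z/p(p-1)\Z$ into residues mod $p$ and mod $p-1$, the same $O(p)$ count of roots of $\Delta(Z)\bmod p$ via the factorisation into $h$, $j^*$, and $j^*-1728$, the same fibre-of-size-$(p-1)$ computation, and the same passage from the trace condition $a_{w,p}=\tau$ to the $j$-invariant condition $j^*(w)\bmod p\in\jp{\tau}$. Where you go beyond the paper is the closing remark on twists, and that remark is not right as stated: you assert that the twist discrepancy is $O(1)$ per $j$-invariant, as it effectively is in \cref{lem:isoperm} where the $j$-fibres in $\F_p$ have bounded size, but here the fibre of $j^*\bmod p$ over a given $a\in\F_p^\times$ has size exactly $p-1$, and the $\F_p$-isomorphism class (hence the sign of the trace) of $E(w)$ is not constant across that fibre --- for two $w_1,w_2$ in the same fibre one has $E(w_1)\cong E(w_2)$ over $\F_p$ precisely when $h(w_1)/h(w_2)$ is a square mod $p$, which varies with $w$. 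So the per-$j$-value discrepancy can a priori be of order $p$ rather than $O(1)$, and the error bound $O(\sqrt p\log p)$ you extract from that reasoning does not follow. The paper's proof itself passes silently from $a_{w,p}=\tau$ to $j^*(w)\in\jp{\tau}$, so structurally your argument matches it, but the extra justification you supply does not actually close the gap you correctly identified.
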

		For the following proposition we'll modify some of the earlier notation.
		Let
		\begin{align*}
			\S(T)			&\subset \Z,\\
			R_{\S(T),p}(w)	&=\#\set{t\in \S(T):t\equiv w\pmod {p(p-1)}}
		\end{align*}
		where $R_{\S(T),p}(w)=\#\S(T)[p(p-1)]^{-1}+O(F(T))$ uniformly in $w$ and $p$ for some $F(T)$.
		\begin{proof}[Proof of Theorem \ref{prop:exp}(i)]
				Let $x_0=6b$.
				Following \cref{prop:setconstlin}(i) we have
				\begin{align*}
					&\sum_{\substack{t\in \S(T)\\\Delta(t)\neq 0}}\pit{t}(\tau;x)\\
					&\qquad=O(\#\S(T))+\sum_{x_0<p\leq x}\la(\#\S(T) [p(p-1)]^{-1}+O(F(T))\ra)(p-1)(\H{\tau}{p}+O(1))\\
					&\qquad=\#\S(T)C_\tau \frac{x^{1/2}}{\log x}+O\la(\#\S(T)\frac{x^{1/2}}{\log^c x}+F(T)\frac{x^{5/2}}{\log x}\ra).\qedhere
				\end{align*}
			\end{proof}
		
			\begin{proof}[Proof of Theorem \ref{prop:exp2}(ii)]
				Let $x_0=6b_1b_2$.
				Following \cref{prop:setconstlin}(i) we have
				\begin{align*}
					&\sum_{\substack{t_1,t_2\in \S(T)\\\Delta_1(t_1)\neq 0\neq \Delta_2(t_2)}}\pitt{t_1}{t_2}(\tau_1,\tau_2;x)\\
					&=O(\#\S(T))+\sum_{x_0<p\leq x}\la(\frac{\#\S(T)}{p}+O(pF(T))\ra)^2(\H{\tau_1}{p}+O(1))(\H{\tau_2}{p}+O(1))\\
					&=		 	\#\S(T)^2C_{\tau_1,\tau_2}\log_2  x					+		O(\#\S(T)^2		+		\#\S(T)F(T)x^2	+		F(T)^2 x^4).\qedhere
				\end{align*}
			\end{proof}

			It's interesting that since we are summing over the modulus $p(p-1)$ we need the bound $F(T)=o(x^{-2}\#\S(T))$ to make \cref{prop:exp}(i) asymptotic and $F(T)=o(x^{-2}\sqrt{\log_2  x}\#\S(T))$ to make \cref{prop:exp2}(ii) asymptotic.
			This is much stronger than the bound $F(T)=o(x^{-1}\#\S(T))$ or $F(T)=o(x^{-1}\sqrt{\log_2  x}\#\S(T))$ needed for Theorems \ref{prop:setconstlin}(i), \ref{prop:set2constlin}(ii), \ref{prop:perm}(i), and \ref{prop:perm2}(ii).

		    The methods of \cref{lem:isogenous} do not work to bound the sum over isogenous curves in this case since the coefficients grow exponentially in $T$.

    \subsection{Central Limit Theorems}
\subsubsection{Central Limit Theorem for elliptic curve pairs}
Let $E(a,b)$ denote the elliptic curve represented by the equation 
\begin{align*}
y^2 = x^3 + ax + b,
\end{align*}
and let $\Delta (a,b)$ be its discriminant. Let $a_{E(a,b)}(p)$ denote the trace of the Frobenius morphism of $E(a,b)/\F_p$ and $\widetilde{a}_{E(a,b)}(p) := a_{E(a,b)}(p) / \sqrt{p}$ the normalised trace.

Given two functions $f(n), g(n)$ on the integers, define the following family of pairs of elliptic curves 
\begin{align*}
T_{f,g}(A,B) := \{(E_1, E_2) \mid E_1 \not \simeq E_2, E_i= E(f(a_i),g(b_i)),  |a_i|\leq A, |b_i| \leq B, i = 1,2\}.
\end{align*}
Give any infinite set $P$ of primes, let $P_x :=\{p \leq x \mid p \in P\}$, and let $\pi_P (x) := \# P_x$. We say that a polynomial is a \textit{permutation polynomial for $P$} if it is a permutation polynomial for each prime in $P$.

We now prove Theorem \ref{pclt}(i), which is a Central Limit Theorem for terms of the form $\widetilde{a}_{E_1}(p) \widetilde{a}_{E_2}(p)$, where $E_1$ and $E_2$ are non-isomorphic elliptic curves.

\begin{proof}[Proof of Theorem \ref{pclt}(i)]
Let 
\begin{align*}
V_{x,r} := \frac{1}{\# T(A,B)}\sum_{(E,E')\in T (A,B)}
\left(\frac{\sum_{p \in P_x} \widetilde{a}_{E}(p)\widetilde{a}_{E'}(p)}{\sqrt{\pi_P(x)}}\right)^r.
\end{align*}
To prove the theorem, it is sufficient to determine the asymptotic behaviour of $V_{x,r}$ as $x \rightarrow \infty$ for all positive integers $r$, and in particular, show that 
\begin{align*}
V_{x,r} \rightarrow \frac{1}{\sqrt{2 \pi}}\int_{-\infty }^{ \infty}t^r e ^{-t^2 /2}dt,
\end{align*}
as $x \rightarrow \infty$.

First, we rewrite $V_{x,r}$ as 
\begin{align}\notag 
&\frac{1}{\# T(A,B)}
\frac{1}{\pi_P(x) ^{r/2}}
\sum_{(E,E')\in T (A,B)}
\sum_{w = 1 }^{ r} \sum_{r_1 + \dots + r_w = r} \frac{r!}{r_1! \dots r_w! w!}\\
&\cdot \sum_{\substack{p_1, \dots, p_w \in P_x \\ {p_i}'s \text{ all distinct }}}
 \left(\widetilde{a}_{E}(p_1)\widetilde{a}_{E'}(p_1)\right)^{r_1} \dots  \left(\widetilde{a}_{E}(p_w)\widetilde{a}_{E'}(p_w)\right)^{r_w} \notag \\
\notag 
 =& \frac{1}{\# T(A,B)}
\frac{1}{\pi_P(x) ^{r/2}}
\sum_{w = 1 }^{ r} \sum_{r_1 + \dots + r_w = r} \frac{r!}{r_1! \dots r_w! w!}
 \sum_{\substack{p_1, \dots, p_w \in P_x \\ {p_i}'s \text{ all distinct }}}  \sum_{(E,E')\in T (A,B)}\\ \label{eqn0}
& \left(\widetilde{a}_{E}(p_1)^{r_1}\dots \widetilde{a}_{E}(p_w)^{r_w} \right)  \left(\widetilde{a}_{E'}(p_1)^{r_1} \dots \widetilde{a}_{E'}(p_w)^{r_w} \right) .
\end{align}
and the inner sum becomes 
\begin{align}\label{sqdeqn}
&\left( \sum_{\substack{|a| \leq A \\  |b| \leq B \\ \Delta(f(a),g(b)) \neq 0 }} 
 \widetilde{a}_{E(f(a),g(b))}(p_1)^{r_1}\dots \widetilde{a}_{E(f(a),g(b))}(p_w)^{r_w} \right)^2 \\
&- \sum_{(E,E') \in T(A,B)}
 \left(\widetilde{a}_{E}(p_1)^{r_1}\dots \widetilde{a}_{E}(p_w)^{r_w} \right)   \left(\widetilde{a}_{E'}(p_1)^{r_1}\dots \widetilde{a}_{E'}(p_w)^{r_w} \right) \notag 
\end{align}
With regard to the second sum in equation (\ref{sqdeqn}), we note that 
\begin{align*}
& \frac{1}{\# T(A,B)} \frac{1}{\pi_P(x) ^{r/2}}
\sum_{w = 1 }^{ r} \sum_{r_1 + \dots + r_w = r} \frac{r!}{r_1! \dots r_w! w!}
 \sum_{\substack{p_1, \dots, p_w \in P_x \\ {p_i}'s \text{ all distinct }}}\\
& \sum_{(E,E') \in T(A,B)} 
 \left(\widetilde{a}_{E}(p_1)^{r_1}\dots \widetilde{a}_{E}(p_w)^{r_w} \right)  \left(\widetilde{a}_{E'}(p_1)^{r_1}\dots \widetilde{a}_{E'}(p_w)^{r_w} \right) \\
 \ll_r &\frac{1}{(4AB)^2}\frac{1}{\pi_P(x) ^{r/2}}  \pi_P(x) ^{r/2} \cdot \left(4AB \cdot 2 ^{2r}x^r\right),
\end{align*}
which tends to zero as $x \rightarrow \infty$ under the conditions $\log A/\log x ,\log B/\log x  \rightarrow \infty$.

Now we consider the first sum in equation (\ref{sqdeqn}).
From \cite{CDF97} we obtain 
\begin{align*}
\widetilde{a}_{E(f(a),g(b))}(p) ^m = \sum_{j=0 }^{ m}h_m (j) \widetilde{a}_{E(f(a),g(b))}(p^j) 
\end{align*}
with
\begin{align}
h_m (j):= \frac{2^{m + 1}}{\pi} \int_{0 }^{ \pi} \cos^m \theta \sin (j + 1)\theta \sin \theta d \theta. \label{heqn}
\end{align}
Note that $h_m (j)$ vanishes if the parity of $m$ and $j$ are different.

So 
\begin{align}
&\sum_{\substack{|a| \leq A \\  |b| \leq B \\ \Delta(f(a),g(b)) \neq 0 }} 
\widetilde{a}_{E(f(a),g(b))}(p_1)^{r_1} \dots \widetilde{a}_{E(f(a),g(b))}(p_w)^{r_w} \notag \\
&= \sum_{\substack{|a| \leq A \\  |b| \leq B \\ \Delta(f(a),g(b)) \neq 0 }} 
\sum_{j_1 = 0 }^{ r_1}h_{r_1}(j_1) \widetilde{a}_{E(f(a),g(b))}(p_1^{j_1})\dots  \sum_{j_w = 0 }^{ r_w}h_{r_w}(j_w) \widetilde{a}_{E(f(a),g(b))}(p_w^{j_w}) \notag \\
&= \sum_{j_1 }^{ r_1} \dots \sum_{j_w}^{ r_w}h_{r_1}(j_1) \dots h_{r_w}(j_w) \sum_{\substack{|a| \leq A \\  |b| \leq B \\ \Delta(f(a),g(b)) \neq 0 }} 
\widetilde{a}_{E(f(a),g(b))}(p_1^{j_1} \dots p_w^{j_w}) \label{eqn1}.
\end{align}

We now make use of Lemma 4.1 of \cite{BP19}.
\begin{lemma}\label{trfm}
Let $s(n)$ be the largest square-free number that divides a positive integer $n$, and let 
\begin{align*}
S (n):= \frac{1}{s (n) ^2} \sum_{\substack{a,b = 1 \\ (\Delta (a,b),n)=1}}^{ s (n)} \frac{{a}_{E(a,b)}(n)}{ \sqrt{n}}.
\end{align*}

Then for all $A, B \geq 1$ and any positive integer $n$,
\begin{align*}
\sum_{\substack{|a|\leq A,|b|\leq B \\ (\Delta (a,b),n)=1}} 
\widetilde{a}_{E(a,b)}(n) = 4AB S(n) + O\left(d(n) s(n)^2\right) + O\left(d(n)s(n)(A + B)\right).
\end{align*}
\end{lemma}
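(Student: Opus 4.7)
The plan is to leverage two ingredients: first, the Hecke eigenvalue $a_{E(a,b)}(n)$ depends only on the residue class of $(a,b)$ modulo $s(n)$; second, the Deligne bound $\abs{a_{E(a,b)}(n)} \leq d(n)\sqrt{n}$, equivalently $\abs{\widetilde{a}_{E(a,b)}(n)} \leq d(n)$. The first ingredient lets me replace the sum over a large box by a weighted sum over one fundamental domain modulo $s(n)$; the second controls the resulting error.

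For the first ingredient, since $(\Delta(a,b),n)=1$ the curve $E(a,b)$ has good reduction at every $p\mid n$, and Hecke multiplicativity gives
\[
a_{E(a,b)}(n)=\prod_{p^k\|n}a_{E(a,b)}(p^k),
\]
with each $a_{E(a,b)}(p^k)$ determined from $a_{E(a,b)}(p)$ via the recursion $a_{E(a,b)}(p^{k+1})=a_{E(a,b)}(p)\,a_{E(a,b)}(p^k)-p\,a_{E(a,b)}(p^{k-1})$. Since $a_{E(a,b)}(p)=p+1-\#E(a,b)(\F_p)$ depends only on $(a\bmod p,b\bmod p)$, we conclude that $a_{E(a,b)}(n)$ depends only on $(a\bmod s(n),b\bmod s(n))$; the same holds for the coprimality condition $(\Delta(a,b),n)=1$.

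I would then partition $\set{(a,b):\abs{a}\leq A,\abs{b}\leq B}$ according to residue classes modulo $s(n)$. For each admissible class $(a_0,b_0)$, the number of lifts in the box is
\[
\la(\frac{2A}{s(n)}+O(1)\ra)\la(\frac{2B}{s(n)}+O(1)\ra)=\frac{4AB}{s(n)^2}+O\la(\frac{A+B}{s(n)}+1\ra).
\]
Multiplying by the common value $\widetilde{a}_{E(a_0,b_0)}(n)$ and summing over admissible $(a_0,b_0)\in\set{1,\dots,s(n)}^2$ produces a main term of $4AB\,S(n)$, plus an error controlled by the Deligne bound: $d(n)$ times $s(n)^2$ (the number of classes) times $O\la((A+B)/s(n)+1\ra)$, which equals $O(d(n)s(n)(A+B))+O(d(n)s(n)^2)$, as claimed.

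The main subtlety is the first ingredient when $n$ is not squarefree: one cannot reduce $a_{E(a,b)}(p^k)$ modulo $p^k$ directly, and must instead propagate through the Hecke recursion from the value at $p$. Once that is secured, the remainder is a routine lattice-point count in residue classes combined with the Deligne/Hasse bound on $\widetilde{a}_{E}(n)$, so I expect no further obstacles.
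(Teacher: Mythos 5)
Your proof is correct, and it is essentially the standard argument. Note, though, that this lemma is not proved in the present paper at all — it is quoted verbatim from Lemma 4.1 of Baier--Prabhu \cite{BP19}, and the paper simply invokes it. Your argument reconstructs the proof given there: the periodicity of $\widetilde{a}_{E(a,b)}(n)$ in $(a,b)$ modulo $s(n)$ (which, as you correctly observe, requires the Hecke recursion to propagate from $a_{E(a,b)}(p)$ to $a_{E(a,b)}(p^k)$, since one cannot reduce the curve modulo $p^k$), a lattice-point count per residue class, and the Hasse bound $\abs{\widetilde{a}_{E(a,b)}(n)}\leq d(n)$ to control the error. One small terminological point: for elliptic curves the bound $\abs{a_{E}(p)}\leq 2\sqrt p$ is Hasse's theorem rather than Deligne's; Deligne's result concerns the higher-weight case, though of course the two agree for weight $2$. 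This does not affect the correctness of the argument.
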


The proof of this Lemma can be adjusted to give, for $f$ and $g$ that are permutation polynomials with respect to an infinite set of primes $P$, that
\begin{align*}
\sum_{\substack{|a|\leq A,|b|\leq B \\ (\Delta (f(a),g(b)),n)=1}} \widetilde{a}_{E(f(a),g(b))}(n) = 4AB S(n) + O\left(d(n) s(n)^2\right) + O\left(d(n)s(n)(A + B)\right),
\end{align*}
for all $n$ that are products of primes in $P$. 

Applying this, we obtain that the innermost sum of equation (\ref{eqn1}) is
\begin{align}\notag 
\sum_{\substack{|a|\leq A,|b|\leq B \\ p_i \nmid \Delta (f(a),g(b))}} \widetilde{a}_{E(f(a),g(b))}(p_1^{j_1} \dots p_w^{j_w}) 
= &4AB S(p_1^{j_1} \dots p_w^{j_w}) + O\left(d(p_1^{j_1} \dots p_w^{j_w}) s(p_1^{j_1} \dots p_w^{j_w})^2\right) \\ \label{treqn}
&+ O\left(d(p_1^{j_1} \dots p_w^{j_w}) p_1 \dots p_w(A + B)\right).
\end{align}
Since $d (p_1^{j_1} \dots p_w^{j_w}) = \prod_{i=1 }^{ w}(j_i+1) \leq \prod_{i=1 }^{ w}(r_i+1) \leq 2 ^r$, the line above becomes
\begin{align*}
= 4AB S(p_1^{j_1} \dots p_w^{j_w}) + O_r\left((p_1 \dots p_w)^2 +  p_1 \dots p_w(A + B)\right).
\end{align*}

\textbf{Case 1:} $r_\ell$ is odd for some $\ell$.\\

If $j_\ell$ is even, then $h_{r_\ell}(j_\ell)=0$, given equation (\ref{heqn}), so equation (\ref{eqn1}) vanishes.

Now let us assume that $j_\ell$ is odd. Since $S (n)$ is multiplicative \cite{BP19}, 
we have 
\begin{align*}
S(p_1^{j_1} \dots p_w^{j_w}) = \prod_{i = 1 }^{ w} S(p_i^{j_i}) = 0,
\end{align*}
the last equality holding since $S(p^{j_\ell}) = 0$, using equation (16) of \cite{MP20} (derived from Lemma 8.2 of \cite{BP19}). 
The right-hand side of equation (\ref{treqn}) becomes 
\begin{align*}
O_r\left((p_1 \dots p_w)^2+p_1 \dots p_w(A + B)\right),
\end{align*}
and (in the context of equation (\ref{eqn0})) we note that 
\begin{align*}
&\frac{1}{\# T(A,B)} \frac{1}{\pi_P(x) ^{r/2}}
\sum_{(E,E') \in T(A,B)}
 \sum_{\substack{p_1, \dots, p_w \in P_x \\ {p_i}'s \text{ all distinct }}}
 \left(\widetilde{a}_{E}(p_1)\widetilde{a}_{E'}(p_1)\right)^{r_1} \dots  \left(\widetilde{a}_{E}(p_w)\widetilde{a}_{E'}(p_w)\right)^{r_w} \\
&\ll_r \frac{1}{\# T(A,B)} \frac{1}{\pi_P(x) ^{r/2}}
\sum_{\substack{p_1, \dots, p_w \in P_x \\ {p_i}'s \text{ all distinct }}}(p_1 \dots p_w)^{4}(A + B)^2\\
&\ll_r \frac{(A+B)^2}{(AB)^2}  \frac{\pi_P(x)^r(x^r)^4 }{\pi_P(x) ^{r/2}} \rightarrow 0,
\end{align*}
 as $x \rightarrow \infty$, under the conditions that $\log A / \log x$ and $\log B / \log x$ tend to infinity as $x \rightarrow \infty$.

\textbf{Case 2:} $(r_1, \dots, r_w) = (2,\dots,2)$.
Since $h_2(j)=1,0,1$ for $j = 0,1,2$, respectively, equation (\ref{eqn1}) becomes 
\begin{align*}
&\sum_{j_1=0,2 } \dots \sum_{j_w=0,2} \sum_{\substack{|a| \leq A \\  |b| \leq B \\ \Delta(f(a),g(b)) \neq 0 }} 
\widetilde{a}_{E(f(a),g(b))}(p_1^{j_1} \dots p_w^{j_w}) = \sum_{\substack{|a| \leq A \\  |b| \leq B \\ \Delta(f(a),g(b)) \neq 0 }} 
1 \\
&+ \sum_{\substack{j_1, \dots, j_w=0,2 \\ (j_1,\dots,j_w)\neq (0,\dots,0)}} 
 \left[4AB S(p_1^{j_1} \dots p_w^{j_w}) + O_r\left((p_1 \dots p_w)^2 +  p_1 \dots p_w(A + B)\right)\right].
\end{align*}
Therefore, we have 
\begin{align*}
\frac{1}{\# T(A,B)} &\frac{1}{\pi_P(x) ^{r/2}}
 \sum_{\substack{p_1, \dots, p_w \in P_x \\ {p_i}'s \text{ all distinct }}}
\left( \sum_{\substack{|a| \leq A \\  |b| \leq B \\ \Delta(f(a),g(b)) \neq 0 }} 
 \left(\widetilde{a}_{E(f(a),g(b))}(p_1)^{r_1} \dots \widetilde{a}_{E(f(a),g(b))}(p_w)^{r_w} \right) \right)^2 \\
 \sim   1+  & \frac{1}{\# T(A,B)} \frac{1}{\pi_P(x) ^{r/2}}
 \sum_{\substack{j_1, \dots, j_w=0,2 \\ (j_1,\dots,j_w)\neq (0,\dots,0)}}  \sum_{\substack{p_1, \dots, p_w \in P_x \\ {p_i}'s \text{ all distinct }}} \\
& \left[32(AB)^2S(n) + (8AB) O_r\left(s(n)^2 + s(n)  (A + B) \right) + (4AB)^2 S(n)^2 \right. \\
& \left.  + O_r\left(AB S(n)s(n)^2\right) + O_r \left(AB S(n) s(n)(A + B)\right)+
O_r\left(s(n)^2 + s(n)(A + B)  \right)^2
\right]
\end{align*}
where $n$ denotes $p_1^{j_1} \dots p_w^{j_w}$. We claim that this expression tends to 1 under the conditions of Theorem \ref{pclt}(i). We will explicitly demonstrate the asymptotic behaviour for two of the terms in the expression above; the rest follow in a similar manner.

In the case of 
\begin{align} \label{term1}
\frac{1}{\# T(A,B)} \frac{1}{\pi_P(x) ^{r/2}}
 \sum_{\substack{j_1, \dots, j_w=0,2 \\ (j_1,\dots,j_w)\neq (0,\dots,0)}}  \sum_{\substack{p_1, \dots, p_w \in P_x \\ {p_i}'s \text{ all distinct }}} 
32 (AB)^2 S(n)
\end{align}
we note that equation (67) of \cite{BP19} states that $S(p^k) \ll k/ \sqrt{p}$ and
from equation (27) of \cite{Na06} we get 
\begin{align*}
\sum_{\substack{j_1, \dots, j_w=0,2 \\ (j_1,\dots,j_w)\neq (0,\dots,0)}}  \sum_{\substack{p_1, \dots, p_w \in P_x \\ {p_i}'s \text{ all distinct }}} 
\frac{1}{\sqrt{p_1^{j_1} \dots p_w^{j_w}}}
\ll_r \pi_P(x) ^{r/2-1} \left(\sum_{p \in P_x}1/p \right)^{r/2}
\end{align*}
Therefore, we majorize expression (\ref{term1}) by
\begin{align*}
\ll_r  \frac{ \left(\sum_{p \in P_x}1/p \right)^{r/2}}{\pi_P(x)} \rightarrow 0,
\end{align*}
as $x \rightarrow \infty$.

Let us also consider the case of 
\begin{align*} 
&\frac{1}{\# T(A,B)} \frac{1}{\pi_P(x) ^{r/2}}
 \sum_{\substack{j_1, \dots, j_w=0,2 \\ (j_1,\dots,j_w)\neq (0,\dots,0)}}  \sum_{\substack{p_1, \dots, p_w \in P_x \\ {p_i}'s \text{ all distinct }}} 
8AB \cdot  O_r\left(s(n) (A+B)\right)\\
\ll_r &\frac{1}{{\rm min}(A,B)}\frac{1}{\pi_P(x) ^{r/2}} \sum_{\substack{p_1, \dots, p_w \in P_x}} p_1 \dots p_w \\
\ll_r &\frac{1}{{\rm min}(A,B)}\frac{1}{\pi_P(x) ^{r/2}} x^{r/2} \pi_P(x) ^{r/2}
\end{align*}
which tends to zero since $\log A / \log x$ and $\log B / \log x$ tend to $\infty$ as $x \rightarrow \infty$.

\textbf{Case 3:} The $r_i$ values are all even, with $(r_1, \dots, r_w) \neq (2,\dots,2)$.

The inside sum of equation (\ref{eqn0}) can be bounded above by $(4AB)^2 (2^r)^2$, and so 
\begin{align*}
&\frac{1}{\# T(A,B)} \frac{1}{\pi_P(x) ^{r/2}}
\sum_{\substack{p_1, \dots, p_w \in P_x \\ {p_i}'s \text{ all distinct }}} \\
&\sum_{(E,E') \in T(A,B)}
 \left(\widetilde{a}_{E}(p_1)^{r_1} \dots \widetilde{a}_{E}(p_w)^{r_w} \right)  \left(\widetilde{a}_{E'}(p_1)^{r_1} \dots \widetilde{a}_{E'}(p_w)^{r_w} \right) \\
& \ll \frac{1}{\# T(A,B)} \frac{1}{\pi_P(x) ^{r/2}} \pi_P(x) ^w (4AB)^2 (2^r)^2 \\
& \ll_r \frac{1}{\pi(x)} \rightarrow 0,
\end{align*}
as $x \rightarrow \infty$, since $w \leq r/2 -1$. This finishes case 3.\\

Now we combine the results from the three cases to obtain that
\begin{align*}
V_{x,r}  = 
\begin{cases} 
\frac{r!}{2 ^{r/2}(r/2)!}, \text{ for even } r , \\ 
0, \text{ otherwise } .
\end{cases}
\end{align*}
Since the same holds true for the integral 
\begin{align*}
\frac{1}{\sqrt{2 \pi}}\int_{-\infty }^{ \infty}t ^r e ^{-t ^2 /2}dt,
\end{align*}
this completes the proof of the theorem.
\end{proof}

\begin{proof}[Proof of Theorem \ref{eclt}(ii)]
The proof starts in the same way as that of the previous theorem. Note that $\widetilde{a}_{E(f(a),g(b))} (n)$ is still periodic in $a,b$ but now with period $\gamma (n) := \varphi (s(n)^2)$. As a result, the formal derivation of a suitable trace formula analogous to Lemma \ref{trfm} gives
\begin{align*}
\sum_{\substack{|a|\leq A,|b|\leq B \\ (\Delta (f(a),g(b)),n)=1}} 
\widetilde{a}_{E(f(a),g(b))}(n) = 4AB S_\gamma(n) + O\left(d(n) \gamma(n)^2\right) + O\left(d(n)\gamma(n)(A + B)\right).
\end{align*}
where
\begin{align*}
S_\gamma (n) := \frac{1}{\gamma (n) ^2} \sum_{\substack{a,b = 1 \\ (\Delta (a,b),n)=1}}^{ \gamma (n)} \frac{{a}_{E(f(a),g(b))}(n)}{ \sqrt{n}}.
\end{align*}
However, one can observe that $S_\gamma(n)= S(n)$ so we get
\begin{align*}
\sum_{\substack{|a|\leq A,|b|\leq B \\ (\Delta (f(a),g(b)),n)=1}} 
\widetilde{a}_{E(f(a),g(b))}(n) = 4AB S(n) + O\left(d(n) s(n)^4\right) + O\left(d(n)s(n)^2 (A + B)\right).
\end{align*}
The rest of the proof proceeds as before.\\
\end{proof}

\subsubsection{Modular forms} One can adjust the proof above to obtain a similar statement for modular forms by using a trace formula. Let $\mathcal{F}_{k,N}$ be the set of normalised cuspidal Hecke eigenforms of weight $k$ and level $N$, and let $\lambda_f (n)$ denote the Hecke eigenvalue of $f \in \mathcal{F}_{k,N}$ at $n$. Then from Section 4 of \cite{Se97} we have 
\begin{align*}
\sum_{f \in \mathcal{F}_{k,N}} \lambda_f (n) = \delta_n \frac{k-1}{12} \frac{1}{\sqrt{n}}\psi(N) + O\left(n ^{1/2}N^{1/2}d(N)\right),
\end{align*}
where $\psi(N)=[SL_2(\Z):\Gamma_0(N)]=N \prod_{p \mid N}(1 + 1/p)$, and where $\delta_n$ is 1 if $n$ is a square, and 0 otherwise.
Then, following the same approach as in the previous proof, we establish Theorem \ref{mf-clt}.

\section{Restricted Hurwitz Number Sums\label{sec:res}}
	\subsection{Character Sums}
		\begin{defn}
			A multiplicative function is a function $f:\N^n\to \Q$ is such that
			$$f(x_1,\dots,x_n)=\prod_{p\mid x_1x_2\dots x_n}f\la(p^{v_p(x_1)},\dots,p^{v_p(x_n)}\ra).$$
		\end{defn}
		\begin{lem}\label{lem:euler}
			For any multiplicative function $f:\N^n\to \Q$ such that $\sum_{(x_1,\dots,x_n)\in\N^n}f(x_1,\dots,x_n)$ converges absolutely we have the absolutely convergent Euler product
			$$\sum_{(x_1,\dots,x_n)\in\N^n}f(x_1,\dots,x_n)=\prod_{p}\sum_{(i_1,\dots,i_n)\in \Z_{\geq 0}^n}f(p^{i_1},\dots,p^{i_n}).$$
		\end{lem}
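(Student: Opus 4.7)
The strategy is to adapt the standard proof of Euler factorization for multiplicative functions of one variable. The key observation is that each $n$-tuple $(x_1,\dots,x_n) \in \N^n$ decomposes uniquely via its local data: for each prime $p$, the exponent vector $(v_p(x_1),\dots,v_p(x_n)) \in \Z_{\geq 0}^n$ is specified, with all but finitely many being $(0,\dots,0)$. The multiplicativity condition then expresses $f(x_1,\dots,x_n)$ as a product over the local vectors, so the work is entirely in rearrangement and convergence.

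First I would record two preliminary facts. Taking $x_1=\cdots=x_n=1$ in the definition gives an empty product, so $f(1,\dots,1)=1$; in particular, every local sum $\sum_{(i_1,\dots,i_n)\in\Z_{\geq 0}^n} f(p^{i_1},\dots,p^{i_n})$ has the constant term $1$, matching the "$(0,\dots,0)$" contribution. Second, $|f|$ is itself multiplicative since $|\cdot|$ is multiplicative on $\Q$, so the hypothesis that $\sum_{(x_1,\dots,x_n)\in\N^n}|f(x_1,\dots,x_n)|<\infty$ gives, as a subsum restricted to tuples supported at a single prime $p$, the bound
$$L_p := \sum_{(i_1,\dots,i_n)\in \Z_{\geq 0}^n} \abs{f(p^{i_1},\dots,p^{i_n})} < \infty.$$
Moreover $\sum_p (L_p - 1)$ is bounded by $\sum_{(x_1,\dots,x_n)\neq (1,\dots,1)} |f(x_1,\dots,x_n)|$, which is finite, yielding absolute convergence of the infinite product $\prod_p L_p$ and hence of the Euler product in the lemma.

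For the main identity I would proceed by finite exhaustion. Fix a finite set $P$ of primes and let $\N_P$ denote the positive integers supported on $P$. Expanding the finite product by distributivity and applying multiplicativity term-by-term,
$$\prod_{p\in P}\sum_{(i_1,\dots,i_n)\in \Z_{\geq 0}^n} f(p^{i_1},\dots,p^{i_n}) = \sum_{(x_1,\dots,x_n)\in \N_P^n} f(x_1,\dots,x_n),$$
where each summand on the right arises from choosing an exponent vector $(i_{p,1},\dots,i_{p,n})$ for every $p\in P$ and setting $x_j=\prod_{p\in P} p^{i_{p,j}}$. No interchange of infinite operations is needed here since $P$ is finite.

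Finally, I would take $P=P_N:=\{p\leq N\}$ and let $N\to\infty$. Any fixed tuple $(x_1,\dots,x_n)$ lies in $\N_{P_N}^n$ as soon as $N$ exceeds its largest prime divisor, so by absolute convergence of $\sum |f|$ the right-hand sides converge to $\sum_{(x_1,\dots,x_n)\in\N^n} f(x_1,\dots,x_n)$, while the left-hand sides converge to the full Euler product by the convergence of $\prod_p L_p$ shown above. The principal obstacle is the bookkeeping of these two simultaneous limits and confirming that the majorant $\prod_p L_p$ indeed dominates all tails, but both follow cleanly from the absolute-convergence hypothesis.
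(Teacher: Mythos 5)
The paper states this lemma without proof, treating it as the standard multivariable extension of the Euler product identity, and your argument is exactly that classical proof; it is correct. Two small points of care are worth flagging, neither of which breaks the argument. First, your assertion that ``no interchange of infinite operations is needed here since $P$ is finite'' is a mild overstatement: each local factor $\sum_{(i_1,\dots,i_n)\in\Z_{\geq0}^n} f(p^{i_1},\dots,p^{i_n})$ is itself an infinite series, so distributing a finite product of such series does require a rearrangement, justified precisely by the absolute convergence $L_p<\infty$ that you establish just before; it would be cleaner to say the interchange is permitted because each factor converges absolutely. Second, for the absolute convergence of the infinite product you want $\sum_p |S_p - 1| < \infty$ where $S_p = \sum f(p^{i_1},\dots,p^{i_n})$; you bound $|S_p-1|\leq L_p-1$ and then observe $\sum_p(L_p-1)$ is dominated by $\sum_{(x_1,\dots,x_n)\neq(1,\dots,1)}|f|$ because, across distinct primes, the tuples $(p^{i_1},\dots,p^{i_n})$ with $(i_1,\dots,i_n)\neq(0,\dots,0)$ are pairwise distinct elements of $\N^n\setminus\{(1,\dots,1)\}$ --- this is correct and is the crux step, so it is good that you made it explicit.
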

		\begin{defn}
			For $f,g,m,n\geq 1$, $(\congval,\modval)=1$, and $2\nmid \tau$ let
			\begin{align*}
				c_{f,g}^{\tau,\congval,\modval}(m,n)	&=\kro{f^2g^2}{2\tau}\sum_{\substack{a(4m)^*\\(\tau^2-af^2,4m)=4\\\frac{\tau^2-af^2}{4}\equiv \congval\mod (\modval,mf^2)}}\sum_{\substack{b(4n)^*\\(\tau^2-bg^2,4n)=4\\\frac{\tau^2-af^2}{4}\equiv \frac{\tau^2-bg^2}{4} \mod (mf^2,ng^2)\\\frac{\tau^2-bg^2}{4}\equiv \congval\mod (\modval,ng^2)}}\kro{a}{m}\kro{b}{n},\\
				c_{f}^{\tau,\congval,\modval}(m)	&=c_{f,1}^{\tau,\congval,\modval}(m,1)=\kro{f^2}{2\tau}\sum_{\substack{a(4m)^*\\(\tau^2-af^2,4m)=4\\\frac{\tau^2-af^2}{4}\equiv \congval\mod (\modval,mf^2)}}\kro{a}{m}.
			\end{align*}
		    where $\kro{\cdot}{\cdot}$ is the Kronecker symbol.
		\end{defn}
		\begin{lem}\label{lem:cstar2}
			Let $i,j,k,\ell\geq 0$.
			\begin{thmlist}
				\item\label{lem:cstar2mult} 	
					$c_{f,g}^{\tau,\congval,\modval}(m,n)$ is multiplicative in the four variables $f,g,m,n$
				\item\label{lem:cstar21eval}
					We have
					\begin{align*}
						c_{1,1}^{\tau,\congval,\modval}(p^i,p^j)
						&=\pw{
								1												&	i=j=0,\\
								2^{\max(i,j)-1}(-1)^{i+j}									&	p=2,v_p(\modval)=0,i+j>0,\\
								-p^{\max(i,j)-1}\kro{\tau^2}{p}								&	v_p(\modval)=0,2\nmid i+j>0,\\
								p^{\max(i,j)-1}\la[p-1-\kro{\tau^2}{p}\ra]						&	v_p(\modval)=0,2\mid i+j>0,\\
								p^{\max(i-v_p(\modval),j-v_p(\modval),0)}\kro{\tau^2-4\congval}{p}^{i+j}	&	v_p(\modval)>0.
							}
					\end{align*}
				\item\label{lem:cstar2peval}
					Since $c_{p^k,p^\ell}^{\tau,\congval,\modval}(p^i,p^j)=c_{p^\ell,p^k}^{\tau,\congval,\modval}(p^j,p^i)$ suppose $k>\ell$.
					If $v_p(2\tau)\geq 1$ or $\max(v_p(\modval),2k)>v_p(\tau^2-4\congval)$
					then
					$$c_{p^k,p^\ell}^{\tau,\congval,\modval}(p^i,p^j)=c_{p^k,p^k}^{\tau,\congval,\modval}(p^i,p^j)=0.$$
					If $v_p(2\tau)=0$ and $\max(v_p(\modval),2k)\leq v_p(\tau^2-4\congval)$ we have
					\begin{align*}
						c_{p^k,p^\ell}^{\tau,\congval,\modval}(p^i,p^j)	
						&=\pw{	0								&	j>0,\\	
								1								&	i=j=0,\\
								0								&	2\nmid i>j=0,2k\geq v_p(\modval),\\	
								p^{i-1}(p-1)							&	2\mid i>j=0,2k\geq v_p(\modval),\\	
								p^{\max(i+2k-v_p(\modval),0)}\kro{\rho_k}{p}^i	&	i>j=0,2k<v_p(\modval),
							}\\
						c_{p^k,p^k}^{\tau,\congval,\modval}(p^i,p^j)
						&=\pw{
								1												&	i+j=0,\\
								0												&	2\nmid i+j>0,2k\geq v_p(\modval),\\
								p^{\max(i,j)-1}(p-1)									&	2\mid i+j>0,2k\geq v_p(\modval),\\
								p^{\max(i+2k-v_p(\modval),j+2k-v_p(\modval),0)}\kro{\rho_k}{p}^{i+j}	&	2k< v_p(\modval).
							}
					\end{align*}
					where $\rho_k=p^{-2k}(\tau^2-4\congval)$.
				\item\label{lem:cstar2bound} 	
					We have
					$$\abs{c_{f,g}^{\tau,\congval,\modval}(m,n)} \ll \frac{mn}{\kappa(mn)(m,n)}$$
					where $\kappa$ the multiplicative function generated by
					$$\kappa(p^i)=\pw{p	&	2\nmid i,\\1	&	2\mid i.}$$
			\end{thmlist}
			\begin{proof}
				(i) 
				We have $c_{1,1}^{\tau,\congval,\modval}(1,1)=1$.
				Now suppose $(f_1g_1m_1n_1,f_2g_2m_2n_2)=1$.
				If $(f_1f_2g_1g_2,2\tau)\neq 1$ then
				$$c_{f_1f_2,g_1g_2}^{\tau,\congval,\modval}(m_1m_2,n_1n_2)=0=c_{f_1,g_1}^{\tau,\congval,\modval}(m_1,n_1)c_{f_2,g_2}^{\tau,\congval,\modval}(m_2,n_2),$$
				so suppose $(f_1f_2g_1g_2,2\tau)= 1$.
				Note that $(\tau^2-af_1^2f_2^2,4m_1m_2)=4$ implies $a\equiv 1\pmod 4$.
				Therefore, by the Generalized Chinese Remainder Theorem, for invertible $a\pmod{4m_1m_2}$ we have $(\tau^2-af_1^2f_2^2,4m_1m_2)=4$ if and only if there exist invertible $a_1\equiv a\pmod {4m_1}$ and $a_2\equiv a\pmod {4m_2}$ such that $(\tau^2-a_1f_1^2f_2^2,4m_1)=4$ and $(\tau^2-a_2f_1^2f_2^2,4m_2)=4$.
				The residues $b\pmod {4n_1n_2}$ are split up similarly.
				Furthermore we have $\kro{a}{m_1m_2}=\kro{a_1}{m_1}\kro{a_2}{m_2}$, $\kro{b}{n_1n_2}=\kro{b_1}{n_1}\kro{b_2}{n_2}$, and the congruences
				\begin{align*}
					\frac{\tau^2-af_1^2f_2^2}{4}	&\equiv	\frac{\tau^2-bg_1^2g_2^2}{4}	\mod (m_1m_2f_1^2f_2^2,n_1n_2g_1^2g_2^2),\\
					\frac{\tau^2-af_1^2f_2^2}{4}	&\equiv	\congval	\mod (\modval,m_1m_2f_1^2f_2^2),\\
					\frac{\tau^2-bg_1^2g_2^2}{4}	&\equiv 	\congval	\mod (\modval,n_1n_2g_1^2g_2^2),
				\end{align*}
				are satisfied if and only if the congruences
				\begin{align*}
					\frac{\tau^2-a_if_1^2f_2^2}{4}	&\equiv	\frac{\tau^2-b_ig_1^2g_2^2}{4}	\mod (m_if_i^2,n_ig_i^2),\\
					\frac{\tau^2-a_if_1^2f_2^2}{4}	&\equiv	\congval	\mod (\modval,m_if_i^2),\\
					\frac{\tau^2-b_ig_1^2g_2^2}{4}	&\equiv 	\congval	\mod (\modval,n_ig_i^2),
				\end{align*}
				are all satisfied for $i=1,2$.
				Finally, since $(f_2,4m_1)=1$ we have $f_2^2\mod 4m_1$ is invertible so we can instead sum over invertible residues $f_2^{-2}a_1\pmod {4m_1}$ which cancels the factors of $f_2$ in the above congruences.
				Furthermore $\kro{a_1}{m_1}=\kro{f_2^{-2}a_1}{m_1}$.
				Similar relations hold for $a_2,b_1,b_2$, resulting in
				$$c_{f_1f_2,g_1g_2}^{\tau,\congval,\modval}(m_1m_2,n_1n_2)=c_{f_1,g_1}^{\tau,\congval,\modval}(m_1,n_1)c_{f_2,g_2}^{\tau,\congval,\modval}(m_2,n_2).$$
				Thus $c_{f,g}^{\tau,\congval,\modval}(m,n)$ is multiplicative in the four variables $f,g,m,n$.

				(ii)
				The case $i=j=0$ is trivial.

				If $v_p(\modval)=0$ then $(v,p^i)=1=(v,p^j)$, meaning the conditions
				\begin{align*}
					\frac{\tau^2-a}{4}	&\equiv	\congval	\mod (\modval,p^i),\\
					\frac{\tau^2-b}{4}	&\equiv 	\congval	\mod (\modval,p^j),
				\end{align*}
				are trivial given that the conditions $(\tau^2-a,4p^i)=4$ and $(\tau^2-b,4p^j)=4$ imply $a\equiv b\equiv 1\pmod 4$.
				Therefore $c_{1,1}^{\tau,\congval,\modval}(p^i,p^j)=c_{1,1}^{\tau}(p^i,p^j)$ which is evaluated in \cite[Lemma 4.7(i)]{adj}.

				The proof $v_p(\modval)>0$ is the same as the proof of the case $v_p(\modval)>2k$ in part (iii).

				(iii)
				Again the cases $i=j=0$ are trivial.

				For the case $j>0$, the condition
				$$\frac{\tau^2-ap^{2k}}{4}\equiv \frac{\tau^2-bp^{2\ell}}{4}\mod (p^{i+2k},p^{i+2\ell})$$
				is equivalent to
				$$ap^{2k}\equiv bp^{2\ell}\mod (p^{\max(i+2k,j+2\ell)}),$$
				which is impossible because $v_p(ap^{2k})=2k>2\ell=v_p(bp^{2\ell})$ and $\max(i+2k,j+2\ell)>2\ell=v_p(bp^{2\ell})$.

				If $2k\geq v_p(\modval)$ we have $(\modval,p^{i+2k})=(\modval,p^{2k})$.
				Therefore, given that $(\tau^2-ap^{2k},4p^i)=4$, the condition
				$$\frac{\tau^2-a}{4}	\equiv	\congval	\mod (\modval,p^{i+2k})$$
				is equivalent to $\max(v_p(\modval),2k)\leq v_p(\tau^2-4\congval)$.
				Thus
				\begin{align*}
					c_{p^k,p^{\ell}}^{\tau,\congval,\modval}(p^i,1)	&=c_{p^k,p^{\ell}}^{\tau}(p^i,1),\\
					c_{p^k,p^{k}}^{\tau,\congval,\modval}(p^i,p^j)	&=c_{p^k,p^{k}}^{\tau}(p^i,p^j)
				\end{align*}
				which are evaluated in \cite[Lemma 4.5-7]{adj}.
				
				If $v_p(\modval)>2k$ we have
				\spanner{\begin{align*}
					c_{p^k,p^\ell}^{\tau,\congval,\modval}(p^i,1)	
						&=\sum_{\substack{a(4p^i)^*\\(\tau^2-ap^{2k},4m)=4\\\frac{\tau^2-ap^{2k}}{4}\equiv \congval\mod (\modval,p^{i+2k})}}\sum_{\substack{b(4)^*\\(\tau^2-bp^{2\ell},4)=4\\\frac{\tau^2-ap^{2k}}{4}\equiv \frac{\tau^2-bp^{2\ell}}{4} \mod (p^{i+2k},p^{2\ell})\\\frac{\tau^2-bp^{2\ell}}{4}\equiv \congval\mod (\modval,p^{2\ell})}}\kro{a}{p}^i\kro{b}{p}^0\\
					&=\sum_{\substack{a(4p^i)^*\\a\equiv p^{-2k}(\tau^2-4\congval)\pmod{4p^{\min(v_p(\modval)-2k,i)}}}}\kro{a}{p}^i.\\
					&=p^{\max(i+2k-v_p(\modval),0)} \kro{\rho_k}{p}^i,
				\end{align*}}
				Suppose $i\geq j$.
				We have
				\spanner{\begin{align*}
					c_{p^k,p^k}^{\tau,\congval,\modval}(p^i,p^j)	
						&=\sum_{\substack{a(4p^i)^*\\(\tau^2-ap^{2k},4m)=4\\\frac{\tau^2-ap^{2k}}{4}\equiv \congval\mod (\modval,p^{i+2k})}}\sum_{\substack{b(4p^j)^*\\(\tau^2-bp^{2k},4p^j)=4\\\frac{\tau^2-ap^{2k}}{4}\equiv \frac{\tau^2-bp^{2k}}{4} \mod (p^{i+2k},p^{j+2k})\\\frac{\tau^2-bp^{2k}}{4}\equiv \congval\mod (\modval,p^{j+2k})}}\kro{a}{p}^i\kro{b}{p}^j\\
						&=\sum_{\substack{a(4p^i)^*,b(4p^j)^*\\a\equiv p^{2k}\tau^2-4\congval\pmod{4p^{\min(v_p(\modval)-2k,i)}}\\a\equiv b\pmod {p^{\min(i,j)}}\\b\equiv p^{-2k}\tau^2-4\congval\pmod {4p^{\min(v_p(\modval)-2k,j)}}}}\kro{a}{p}^i\kro{b}{p}^j\\
						&=\sum_{\substack{a(4p^i)^*\\a\equiv p^{2k}\tau^2-4\congval\pmod{4p^{\min(v_p(\modval)-2k,i)}}}}\kro{a}{p}^{i+j}\\
						&=p^{\max(i+2k-v_p(\modval),0)} \kro{\rho_k}{p}^{i+j}.
				\end{align*}}

				(iv)
				By the above we have

				\spanner{\begin{align*}
					\abs{c_{f,g}^{\tau,\congval,\modval}(m,n)}	&=\la[\prod_{\substack{p\mid fgmn\\p\mid \modval}}\abs{c_{p^{v_p(f)},p^{v_p(g)}}^{\tau,\congval,\modval}(p^{v_p(m)},p^{v_p(n)})}\ra]\la[\prod_{\substack{p\mid fgmn\\p\mid \modval}}\abs{c_{p^{v_p(f)},p^{v_p(g)}}^{\tau,\congval,\modval}(p^{v_p(m)},p^{v_p(n)})}\ra]\\
										&\ll\la[\prod_{\substack{p\mid fgmn\\p\mid \modval}}\frac{p^{v_p(m)+v_p(n)+1}}{\kappa(p^{v_p(m)+v_p(n)})(p^{v_p(m)},p^{v_p(n)})}\ra]\la[\prod_{\substack{p\mid fgmn\\p\nmid \modval}}\frac{p^{v_p(m)+v_p(n)}}{\kappa(p^{v_p(m)+v_p(n)})(p^{v_p(m)},p^{v_p(n)})}\ra]\\
										&\leq \frac{\modval mn}{\kappa(mn)(m,n)}
										\ll \frac{mn}{\kappa(mn)(m,n)}.\qedhere
				\end{align*}}
			\end{proof}
		\end{lem}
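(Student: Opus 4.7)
The plan is to prove the four parts in the order (i), (iii), (ii), (iv): part (i) reduces the global computation to local factors, (iii) is the technical heart and contains (ii) as the specialization $k=\ell=0$, and (iv) falls out by assembling the explicit local evaluations. For (i), I would first handle the trivial case $c^{\tau,\congval,\modval}_{1,1}(1,1)=1$ and then establish the multiplicative splitting under $(f_1g_1m_1n_1,f_2g_2m_2n_2)=1$. If $\gcd(f_1f_2g_1g_2, 2\tau) > 1$ then both sides vanish by the $\kro{f^2g^2}{2\tau}$ prefactor, so I reduce to the coprime case. The Generalized Chinese Remainder Theorem then puts residues $a \pmod{4m_1m_2}$ in bijection with pairs $(a_1, a_2)$ modulo $4m_1$ and $4m_2$ respectively (and similarly for $b$); the constraint $(\tau^2 - af_1^2f_2^2, 4m_1m_2)=4$ splits into its two local counterparts (using that $a \equiv 1 \pmod 4$ is forced), the Kronecker symbol factors, and the residue conditions modulo $(\modval, \cdot)$ and $(mf^2, ng^2)$ separate. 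A final substitution $a_i \mapsto f_{3-i}^{-2} a_i$ (invertible since $(f_{3-i}, 4m_i)=1$) strips the unwanted cross-factors of $f$; the Kronecker symbol is invariant because $f_{3-i}^{-2}$ is a square.

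For (iii) with $k > \ell$, I would first observe that when $j > 0$ the central congruence $(\tau^2 - ap^{2k})/4 \equiv (\tau^2 - bp^{2\ell})/4 \pmod{(p^{i+2k}, p^{j+2\ell})}$ rearranges to $ap^{2k} \equiv bp^{2\ell} \pmod{p^{\max(i+2k, j+2\ell)}}$, which is impossible since $v_p(ap^{2k}) = 2k > 2\ell = v_p(bp^{2\ell})$ while $\max(i+2k, j+2\ell) > 2\ell$. If $v_p(2\tau) \geq 1$ or $\max(v_p(\modval), 2k) > v_p(\tau^2 - 4\congval)$, the defining congruence has no solution so the sum vanishes. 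Otherwise I split on the size of $v_p(\modval)$ relative to $2k$: when $2k \geq v_p(\modval)$ the condition modulo $\modval$ is automatic and the sum reduces to $c^\tau_{p^k, p^\ell}(p^i, 1)$ or $c^\tau_{p^k, p^k}(p^i, p^j)$, evaluated in \cite[Lemmas 4.5--4.7]{adj}; when $2k < v_p(\modval)$, the congruence pins $a$ uniquely modulo $4p^{\min(v_p(\modval)-2k, i)}$ and a direct evaluation of the Kronecker character sum over the resulting arithmetic progression in $(\Z/4p^i\Z)^*$ yields $p^{\max(i+2k-v_p(\modval), 0)} \kro{\rho_k}{p}^i$. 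The diagonal case $c_{p^k, p^k}^{\tau, \congval, \modval}(p^i, p^j)$ is handled analogously, with the two sets of conditions on $a$ and $b$ collapsing to the same single constraint modulo $4p^{\min(v_p(\modval)-2k, \min(i,j))}$.

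Part (ii) is the specialization $k = \ell = 0$ of (iii): when $v_p(\modval) = 0$ the modular congruences become vacuous and the formula reduces to $c^\tau_{1,1}(p^i, p^j)$ from \cite[Lemma 4.7(i)]{adj}, while for $v_p(\modval) > 0$ the $v_p(\modval) > 2k = 0$ branch of (iii) gives the claim directly. For (iv), multiplicativity from (i) reduces the bound to a prime-by-prime verification. Consulting the tables in (ii) and (iii), each local factor is at most $p^{v_p(m)+v_p(n)}/\bigl(\kappa(p^{v_p(m)+v_p(n)})(p^{v_p(m)}, p^{v_p(n)})\bigr)$, with possibly an extra factor of $p$ at primes dividing $\modval$; the resulting $\modval$-dependent constant is absorbed into $\ll$. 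The main obstacle will be the case bookkeeping in (iii), especially when $v_p(\modval)$ lies strictly between $2k$ and $2\max(k,\ell)$: one must carefully track how the three constraints---the $(\tau^2 - \cdot, 4m)=4$ condition, the joint congruence between the two $(\tau^2 - \cdot)/4$ expressions, and the congruences modulo $\modval$---interact. Once these are separated by the sign of $j$, the ordering of $2k$ versus $v_p(\modval)$, and the parity of $v_p(2\tau)$, each individual sum reduces to a one-line character calculation whose output passes cleanly into the uniform bound needed for (iv).
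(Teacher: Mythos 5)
Your proposal follows essentially the same approach as the paper: (i) by the generalized CRT and the $f^{-2}$-substitution, (iii) by the impossibility argument for $j>0$ plus a case split on $2k$ versus $v_p(\omega)$ (citing \cite{adj} when $2k\ge v_p(\omega)$ and evaluating the character sum directly when $2k<v_p(\omega)$), (ii) by citing \cite{adj} for $v_p(\omega)=0$ and reusing the $v_p(\omega)>2k$ computation for $v_p(\omega)>0$, and (iv) by assembling the local factors; indeed, the paper does exactly this, noting that the $v_p(\omega)>0$ case of (ii) is ``the same as the proof of the case $v_p(\omega)>2k$ in part (iii).'' One small caution: stating that ``(ii) is the specialization $k=\ell=0$ of (iii)'' is not literally true, since the vanishing clause of (iii) (namely $v_p(2\tau)\ge 1 \Rightarrow c=0$) fails at $k=\ell=0$ — there the prefactor is $\kro{1}{2\tau}=1$, not $0$, which is why (ii) gives a nonzero value at $p=2$; your actual method (direct computation, not formula substitution) sidesteps this, but the phrasing should be adjusted.
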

	
	\subsection{Two-Curve Hurwitz Number Sum}
		\begin{prop}
			For odd $\tau$ and $(\congval,\modval)=1$ we have
			$$\sum_{\substack{p\leq x\\p\equiv \congval\pmod \modval}}\frac{\H{\tau}{p}^2}{p^2}=C_{\tau,\tau,\congval,\modval} \log \log x+O(1)$$
			where
			\begin{align*}
				C_{\tau,\tau,\congval,\modval}	&=\frac{4}{\pi^2}\prod_{p}\Lambda_{\tau,\tau,\congval,\modval}(p)>0\\
				\Lambda_{\tau,\tau,\congval,\modval}(p)
						&=	\pw{
								\frac{4}{9}																													& v_p(\modval)=0,p=2,\\
								\frac{p^2 (p^2+1)}{(p^2-1)^2}																									& v_p(\modval)=0,p\mid \tau,\\
								\frac{p^2 (p^4-2p^2-3p-1)}{(p+1)^3(p-1)^3}																							& v_p(\modval)=0,p\nmid 2\tau,\\
								\frac{\sigma_{-1}\la(p^{\ceil{v_p(\modval)/2}-1}\ra)^2}{\phi(p^{v_p(\modval)})}+\frac{2p^{\ceil{v_p(\modval)/2}}(p+1)^2-p^2-3p-1}{p^{4\ceil{v_p(\modval)/2}-4}(p^2-1)^3}			& 1\leq v_p(\modval)\leq v_p(\rho_0),\\
								\frac{1}{\phi(p^{v_p(\modval)})}																									& 0=v_p(\rho_0)<v_p(\modval),\\
								\frac{\sigma_{-1}\la(p^{v_p(\rho_0)/2-1/2}\ra)^2}{\phi(p^{v_p(\modval)})}																		& 0<v_p(\rho_0)<v_p(\modval),2\nmid v_p(\rho_0),\\
								\frac{1}{\phi(p^{v_p(\modval)})}\la[\sigma_{-1}\la(p^{v_p(\rho_0)/2}\ra)+\frac{1}{p^{v_p(\rho_0)/2}\la[\kro{\rho_*}{p}p-1\ra]}\ra]^2									& 0<v_p(\rho_0)<v_p(\modval),2\mid v_p(\rho_0),
							}
			\end{align*}
			with $\rho_0=\tau^2-4\congval$, $\rho_*=p^{-v_p(\tau^2-4\congval)}(\tau^2-4\congval)$, and $\sigma_\alpha$ is the sum-of-divisors function $\sigma_\alpha(n)=\sum_{d\mid n}d^\alpha$.
			\begin{proof}
				Similar to \cite{adj} we begin by considering
				\begin{equation*}
					\sum_{\substack{f,g\leq 2\sqrt{x}\\(fg,2\tau)=1}}\frac{1}{fg}\sum_{\substack{p\in S_{f,g}(x)\\p\equiv \congval\pmod \modval}}L(1,\chi_{d_1})L(1,\chi_{d_2})\log p,
				\end{equation*}
				where
				\begin{equation*}
					S_{f,g}(x)	=\set{\max(3,\tau^2/4)<p\leq x:d_1f^2=d_2g^2=\tau^2-4p}.
				\end{equation*}
				Following \cite{adj} up to \cite[Equation (14)]{adj}, for $U,\ep>0$, $0<\delta<\frac 12$, and $1<V\leq 2\sqrt x$ we have
				\begin{align}
					\sum_{\substack{f,g\leq 2\sqrt{x}\\(fg,2\tau)=1}}&\frac{1}{fg}\sum_{\substack{p\in S_{f,g}(x)\\p\equiv \congval\pmod \modval}}L(1,\chi_{d_1})L(1,\chi_{d_2})\log p\nonumber\\
					&=
					\sum_{\substack{f,g\leq V\\(fg,2\tau)=1}}\frac{1}{fg}
					\sum_{mn\leq U\log U}\frac{e^{-mn/U}}{mn}
					\sum_{\substack{a\pmod{4m}\\b\pmod{4n}}}\kro{a}{m}\kro{b}{n}
					\sum_{\substack{\max(3,\tau^2/4)<p\leq x\\p\nmid mn\\p\equiv \frac{\tau^2-af^2}{4}\pmod{mf^2}\\p\equiv \frac{\tau^2-bg^2}{4}\pmod{ng^2}\\p\equiv \congval\pmod \modval}}\log p\nonumber\\
					&\quad+O\la(\frac{x^{11/8+2\ep}}{U^{1/2}}+U^{\delta}\log U\log^2 V+\frac{x\log x\log^2 U}{V}+\frac{x\log x \log V^2}{(U\log U)^{1-\ep}}\ra).\label{eq:introcstar2}
				\end{align}
				Note that the error term $O\la(\frac{x\log x\log^2 U}{V}\ra)$ is slightly weaker than the term $O\la(\frac{x\log x\log^2 U}{V^2}\ra)$ found in \cite{adj}.
				This accounts for the error terms contributed by the sum over $f\leq V<g\leq \sqrt x$ and $g\leq V<f\leq \sqrt x$ that \cite{adj} did not consider.
				There is also an error term $O(U^{\delta}\log U\log^2 V)$ that both \cite{adj} and \cite{dp} did not consider, which comes from the primes $p\mid mn$.
				This comes from the bounds $\sum_{p\mid \ell}1\ll \log \ell\ll \ell^{\delta}$ and $\sum_{\ell=1}^{\infty}\frac{d(\ell)}{\ell^{1-\delta}}e^{-\ell/U}\ll U^{\delta}\log U$ obtained from moving the line of integration in the identity
				$$\sum_{\ell=1}^{\infty}\frac{d(\ell)}{\ell^{1-\delta}}e^{-\ell/U}=\frac{1}{2\pi i}\int_{(1)}\zeta(s+1-\delta)^2\Gamma(s+1)\frac{U^s}{s}\differ s,$$
				from $(1)$ to $(-1/2)$, in a similar manner to \cite[Lemma 3.2]{adj}.

				For $(a,n)=1$ we use the same notation as \cite{adj},
				$$\psi(x;n,a)=\sum_{\substack{p\leq x\\p\equiv a\pmod n}}\log p=\frac{x}{\phi(n)}+E(x;n,a).$$
				Then the first term in Equation \eqref{eq:introcstar2} is
				\begin{align*}
					x\sum_{\substack{f,g\leq V}}\frac{1}{fg}
						&\sum_{mn\leq U\log U}\frac{e^{-mn/U}}{mn}\frac{c_{f,g}^{\tau,\congval,\modval}(m,n)}{\phi([\modval,mf^2,ng^2])}\\
					&+O\la(\sum_{\substack{f,g\leq V\\mn\leq U\log U}}\frac{1}{fgmn}
						\sum\nolimits^*_{\substack{a\pmod{4m}\\b\pmod{4n}}}\abs{E(x,[\modval,mf^2,ng^2],\theta^*)}\ra)\\
					&+O\la(U\log^2 U\log^2 V\ra)
				\end{align*}
				where $\sum^*_{\substack{a\pmod{4m}\\b\pmod{4n}}}$ means the sum runs over invertible residues such that
				\begin{align*}
					\frac{\tau^2-af^2}{4}	&=\frac{\tau^2-bg^2}{4}	\mod (mf^2,ng^2)\\
					\frac{\tau^2-af^2}{4}	&=\congval	\mod (\modval,mf^2)\\
					\frac{\tau^2-bg^2}{4}	&=\congval	\mod (\modval,ng^2),
				\end{align*}
				and the unique residue $\theta^*\mod [\modval,mf^2,ng^2]$ such that
				\begin{align*}
					\theta^*	&=\frac{\tau^2-af^2}{4} \pmod{mf^2},\\
					\theta^*	&=\frac{\tau^2-bg^2}{4} \pmod{ng^2},\\
					\theta^*	&=\congval \pmod{\modval},
				\end{align*}
				is invertible.
				Note that the condition $(fg,2\tau)=1$ is now redundant due to the factor of $\kro{f^2g^2}{2\tau}$ in the definition of $c_{f,g}^{\tau,\congval,\modval}(m,n)$.
				The error term $O\la(U\log^2 U\log^2 V\ra)$ comes from the primes $p\leq \max(3,\tau^2/4)$.

				Let $c^*(\ell)$ be the number of ways to write $\ell=[n_1,n_2,n_3]$.
				Similar to the proof of \cite[Lemma 3.4]{adj} we have $c^*(\ell)\leq d(\ell)^3$.
				As stated in \cite{adj} we have
				$$\sum_{\ell\leq N}d^r(\ell)\ll N\log^{2^r-1}N$$
				so by partial summation we have $\sum_{\ell\leq N}\frac{d^6(\ell)}{\ell}\ll \log^{64}N$.
				By a theorem of Barban-Davenport-Halberstam \cite{dav}
				\spanner{\begin{align*}
					\sum_{\substack{f,g\leq V}}\frac{1}{fg}\sum_{mn\leq U\log U}&\frac{1}{mn}
						\sum\nolimits^*_{\substack{a\pmod{4m}\\b\pmod{4n}}}\abs{E(x,[\modval,mf^2,ng^2],\theta^*)}\\
					&\leq \modval\sum_{f,g\leq V}fg\sum_{\ell\leq \modval f^2g^2U\log U}\frac{1}{\ell}
						\sum_{\theta^*\pmod{\ell}^*}c^*(\ell)\abs{E(x,\ell,\theta^*)}\\
					&\leq \modval\sum_{f,g\leq V}fg
						\la(\sum_{\substack{\ell\leq \modval V^4 U\log U\\\theta^*\pmod \ell}}\frac{d^6(\ell)}{\ell^2}\ra)^{1/2}\la(\sum_{\substack{\ell\leq \modval V^4 U\log U\\\theta^*\pmod \ell}}E^2(x,\ell,\theta^*)\ra)^{1/2}\\
					&\ll \sum_{f,g\leq V}fg
						\log^{32}(V^4 U\log U)(V^4Ux \log U \log x)^{1/2}\\
					&\ll 
						V^6(Ux \log U \log x)^{1/2}\log^{32}x.
				\end{align*}}
				whenever
				\begin{equation*}
					\frac{x}{\log^Ax}\leq \modval V^4U\log U\leq x\qquad \text{ for some }A>0.\label{eq:davcond2}
				\end{equation*}

				Using the fact that
				$$\phi([\modval,mf^2,ng^2])=\frac{\phi(\modval)\phi([mf^2,ng^2])}{\phi(\modval,[mf^2,ng^2])}$$
				and the bound in \cref{lem:cstar2bound}, all of the analysis in \cite[Equations (19-26)]{adj} still holds.
				We have
				\begin{align*}
					\sum_{\substack{f,g\leq 2\sqrt{x}\\(fg,2\tau)=1}}&\frac{1}{fg}\sum_{\substack{p\in S_{f,g}(x)\\p\equiv \congval \pmod \modval}}L(1,\chi_{d_1})L(1,\chi_{d_2})\log p\\
					&=\frac{x}{\phi(\modval)}\sum_{f,g,m,n\geq 1}\frac{c_{f,g}^{\tau,\congval,\modval}(m,n)\phi(\modval,[mf^2,ng^2])}{fgmn \phi([mf^2,ng^2])}
						+O\Bigg(
							\frac{x^{11/8+2\ep}}{U^{1/2}}+U^{\delta}\log U\log^2 V\\
					&\qquad+\frac{x\log x\log^2 U}{V}+\frac{x\log x \log V^2}{(U\log U)^{1-\ep}}+U\log^2 U\log^2 V\\
					&\qquad+V^6(Ux \log U \log x)^{1/2}\log^{32}x+\frac{x}{(U\log U)^{1/2-\ep}}+\frac{x}{V^{1-2\ep}}+\frac{x}{U^{1/4}}\Bigg)
				\end{align*}
				whenever the condition \eqref{eq:davcond2} is satisfied.
				Furthermore, the infinite series converges absolutely.
				Once again note the weaker bound $O\la(\frac{x}{V^{1-2\ep}}\ra)$ that accounts for the terms $f\leq V<g$ and $g\leq V<f$ not considered in the analysis of \cite{adj}
				
				Let $U=\frac{x}{\modval\log^\alpha x}$ and $V=\log^\beta x$ for positive integers $\alpha,\beta$ such that $\alpha-4\beta-1\geq 1$, ensuring that the condition \eqref{eq:davcond2} is satisfied.
				Then for any $c>0$ we have 
				\begin{align*}
					\sum_{\substack{f,g\leq 2\sqrt{x}\\(fg,2\tau)=1}}&\frac{1}{fg}\sum_{\substack{p\in S_{f,g}(x)\\p\equiv \congval\pmod \modval}}L(1,\chi_{d_1})L(1,\chi_{d_2})\log p\\
					&=K_{\tau,\congval,\modval} x+O\la(\frac{x}{\log^{c}x}\ra)
				\end{align*}
				for a sufficiently large $\alpha,\beta$, where
				\begin{align*}
					K_{\tau,\tau,\congval,\modval} &=\frac{1}{\phi(\modval)}\sum_{f,g,m,n\geq 1}\frac{c_{f,g}^{\tau,\congval,\modval}(m,n)\phi(\modval,[mf^2,ng^2])}{fgmn \phi([mf^2,ng^2])}
				\end{align*}
				
				By the same analysis as in \cite{adj} we have
				\begin{align*}
					\sum_{\substack{p\leq x\\p\in\cP}}\frac{\H{\tau}{p}^2}{p^2}
					&=\frac{4}{\pi^2}\sum_{\substack{f,g\leq 2\sqrt x\\(fg,2\tau)=1}}\frac{1}{fg}\sum_{\substack{p\in S_{f,g}(x)\\p\equiv \congval\pmod \modval}}\frac{L(1,\chi_{d_1})L(1,\chi_{d_2})}{p}\\
					&=\frac{4K_{\tau,\tau,\congval,\modval}}{\pi^2} \log \log x+O(1).
				\end{align*}

				Finally, by \cref{lem:cstar2} we have
				\begin{align*}
					\sum_{i,j,k,\ell\geq 0}\frac{c_{p^k,p^\ell}^{\tau,\congval,\modval}(p^i,p^j)}{p^{i+j+k+\ell}\phi(p^{\max(v_p(\modval),i+2k,j+2\ell)})}	&=\Lambda_{\tau,\tau,\congval,\modval}(p),
				\end{align*}
				so by \cref{lem:cstar2mult} and \cref{lem:euler} we have
				\begin{align*}
					K_{\tau,\tau,\congval,\modval} 	&=\sum_{f,g,m,n\geq 1}\frac{c_{f,g}^{\tau,\congval,\modval}(m,n)}{fgmn \phi([\modval,mf^2,ng^2])}\\
										&=\prod_p\sum_{i,j,k,\ell\geq 0}\frac{c_{p^k,p^\ell}^{\tau,\congval,\modval}(p^i,p^j)}{p^{i+j+k+\ell}\phi(p^{\max(v_p(\modval),i+2k,j+2\ell)})}\\
										&=\prod_p \Lambda_{\tau,\tau,\congval,\modval}(p).\qedhere
				\end{align*}
			\end{proof}
		\end{prop}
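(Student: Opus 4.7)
The plan is to mirror the strategy of \cite{adj}, which handles the case with no congruence condition, while carefully tracking the extra congruence $p \equiv \congval \pmod \modval$ as a third modular condition alongside the two produced by the Hurwitz–Kronecker expansion. First, I would use the formula $H(d) = \frac{\sqrt{|d|}}{\pi} L(1,\chi_d)$ to rewrite $H(\tau^2-4p)^2$ in terms of $L(1,\chi_{d_1})L(1,\chi_{d_2})$, where $d_i f_i^2 = \tau^2 - 4p$, so that the target sum becomes
\[
\sum_{\substack{p\leq x\\p\equiv \congval\pmod\modval}}\frac{H(\tau^2-4p)^2}{p^2}=\frac{4}{\pi^2}\sum_{\substack{f,g\leq 2\sqrt{x}\\(fg,2\tau)=1}}\frac{1}{fg}\sum_{\substack{p\in S_{f,g}(x)\\p\equiv \congval\pmod\modval}}\frac{L(1,\chi_{d_1})L(1,\chi_{d_2})}{p}.
\]
Then, following \cite{adj}, I would expand each $L(1,\chi_{d_i})$ as a truncated character sum (introducing a smoothing factor $e^{-mn/U}$ and a truncation at $V$), swap the orders of summation, and arrive at the inner sum
\[
\sum_{\substack{\max(3,\tau^2/4)<p\leq x,\ p\nmid mn\\ p\equiv (\tau^2-af^2)/4\pmod{mf^2}\\ p\equiv (\tau^2-bg^2)/4\pmod{ng^2}\\ p\equiv \congval\pmod{\modval}}}\log p.
\]

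Next I would combine the three congruences via CRT: when compatible, they specify a single residue class $\theta^*$ modulo $[\modval,mf^2,ng^2]$, and the compatibility conditions (together with the $(\tau^2-af^2,4m)=4$ and analogous conditions forcing $d_1,d_2\equiv 0,1\pmod 4$) are exactly what the definition of $c_{f,g}^{\tau,\congval,\modval}(m,n)$ encodes. Applying the prime number theorem in APs in the form $\psi(x;n,a)=x/\phi(n)+E(x;n,a)$ produces the main term
\[
\frac{x}{\phi(\modval)}\sum_{f,g,m,n\geq 1}\frac{c_{f,g}^{\tau,\congval,\modval}(m,n)\,\phi(\modval,[mf^2,ng^2])}{fgmn\,\phi([mf^2,ng^2])}
\]
(using $\phi([\modval,mf^2,ng^2])=\phi(\modval)\phi([mf^2,ng^2])/\phi(\modval,[mf^2,ng^2])$), plus an error that is handled by Barban–Davenport–Halberstam applied to $\ell\leq \modval V^4 U\log U$ using the bound $|c_{f,g}^{\tau,\congval,\modval}(m,n)|\ll mn/[\kappa(mn)(m,n)]$ from \cref{lem:cstar2bound}, Cauchy–Schwarz, and the standard estimate $\sum_{\ell\leq N} d_k(\ell)/\ell\ll \log^{2^k-1}N$. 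Choosing $U=x/(\modval \log^\alpha x)$ and $V=\log^\beta x$ with $\alpha$ large compared to $\beta$ kills all error terms to within $O(x/\log^c x)$ for any fixed $c$, and Abel summation then converts $K_{\tau,\tau,\congval,\modval}\,x+O(x/\log^c x)$ into $(4K_{\tau,\tau,\congval,\modval}/\pi^2)\log\log x+O(1)$.

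Finally, because $c_{f,g}^{\tau,\congval,\modval}(m,n)$ is multiplicative in all four arguments by \cref{lem:cstar2mult}, \cref{lem:euler} gives
\[
K_{\tau,\tau,\congval,\modval}=\prod_p \sum_{i,j,k,\ell\geq 0}\frac{c_{p^k,p^\ell}^{\tau,\congval,\modval}(p^i,p^j)}{p^{i+j+k+\ell}\phi(p^{\max(v_p(\modval),i+2k,j+2\ell)})},
\]
and the local sum is computed using the explicit evaluations in parts (ii)–(iii) of \cref{lem:cstar2}, splitting into cases $p\mid 2$, $p\mid\tau$, $p\nmid 2\tau$ and $v_p(\modval)=0$; $1\leq v_p(\modval)\leq v_p(\rho_0)$; and $v_p(\rho_0)<v_p(\modval)$ with further subcases depending on the parity of $v_p(\rho_0)$. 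These are finite geometric-type sums and match the claimed $\Lambda_{\tau,\tau,\congval,\modval}(p)$. Positivity of $C_{\tau,\tau,\congval,\modval}$ follows since each local factor is positive.

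The main obstacle will be the bookkeeping in the third-congruence step: verifying that the Generalized CRT compatibility among the three moduli $mf^2$, $ng^2$ and $\modval$ is exactly captured by $c_{f,g}^{\tau,\congval,\modval}(m,n)$, and pushing through the Barban–Davenport–Halberstam argument with the enlarged modulus $[\modval,mf^2,ng^2]$ — in particular choosing $U,V$ so that condition \eqref{eq:davcond2} adapted to $\modval V^4 U\log U$ still yields a sufficient saving, and correctly tracking the new $O(x\log x\log^2 U/V)$ and $O(U^{\delta}\log U\log^2 V)$ terms that the original \cite{adj} did not need to address.
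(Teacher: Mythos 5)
Your proposal mirrors the paper's proof almost step for step: you convert to $L(1,\chi_{d_1})L(1,\chi_{d_2})$ via the Kronecker class number formula, expand each $L$-value with a smoothed truncated character sum, introduce the congruence $p\equiv\congval\pmod\modval$ as a third condition glued via CRT into a single residue $\theta^*\pmod{[\modval,mf^2,ng^2]}$, apply the PNT in APs together with Barban--Davenport--Halberstam over the enlarged modulus $\modval V^4 U\log U$, choose $U,V$ accordingly, recover the constant as an Euler product via the multiplicativity of $c_{f,g}^{\tau,\congval,\modval}$ and \cref{lem:euler}, and evaluate the local factors from \cref{lem:cstar2}(ii)--(iii). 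This is the same decomposition and the same key lemmas as the paper's argument, including your correct identification of the extra error terms $O(x\log x\log^2 U/V)$ and $O(U^\delta\log U\log^2 V)$ that the original \cite{adj} treatment omitted, so no further comparison is needed.
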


	\subsection{One-Curve Hurwitz Number Sum}

	    The one-curve case was handled in \cite{jcong}, we re-express the result below.
		\begin{prop}
			For odd $\tau$, $(\congval,\modval)=1$, and all $c>0$ we have
			$$\sum_{\substack{p\leq x\\p\equiv \congval\pmod \modval}}\frac{\H{\tau}{p}}{p}=C_{\tau,\congval,\modval} \pi_{1/2}(x)+O\la(\frac{x}{\log^c x}\ra)$$
			where
			\begin{align*}
				C_{\tau,\congval,\modval}			&=\frac{4}{\pi}\prod_p\Lambda_{\tau,\congval,\modval}(p)>0,\\
				\Lambda_{\tau,\congval,\modval}(p)		&=	\pw{
													\frac{2}{3}																						&v_p(\modval)=0,p=2,\\
													\frac{p^2}{p^2-1}																					&v_p(\modval)=0,p\mid \tau,\\
													\frac{p(p^2-p-1)}{(p^2-1)(p-1)}																			&v_p(\modval)=0,p\nmid 2\tau,\\
													\frac{\sigma_{-1}\la(p^{\ceil{v_p(\modval)/2}-1}\ra)}{\phi(p^{v_p(\modval)})}+\frac{1}{p^{3\ceil{v_p(\modval)/2}-3}(p^2-1)(p-1)}			&1\leq v_p(\modval)\leq v_p(\rho_0),\\
													\frac{p}{\phi(p^{v_p(\modval)})\la[p-\kro{\rho_*}{p}\ra]}															&0=v_p(\rho_0)<v_p(\modval),\\
													\frac{\sigma_{-1}\la(p^{v_p(\rho_0)/2-1/2}\ra)}{\phi(p^{\max(v_p(\modval))})}											&0<v_p(\rho_0)<v_p(\modval),2\nmid v_p(\rho_0),\\
													\frac{1}{\phi(p^{v_p(\modval)})}\la[\sigma_{-1}\la(p^{v_p(\rho_0)/2}\ra)+\frac{1}{p^{v_p(\rho_0)/2}\la[\kro{\rho_*}{p}p-1\ra]}\ra]			&0<v_p(\rho_0)<v_p(\modval),2\mid v_p(\rho_0).
												}	
			\end{align*}
			with $\rho_0=\tau^2-4\congval$, $\rho_*=p^{-v_p(\tau^2-4\congval)}(\tau^2-4\congval)$, and $\sigma_\alpha$ is the sum-of-divisors function $\sigma_\alpha(n)=\sum_{d\mid n}d^\alpha$.
			\end{prop}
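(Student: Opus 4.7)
The plan is to derive this from the one-curve analogue of the machinery set up in the two-curve proof, which is a simpler single-variable version of that argument (and essentially the content of \cite{jcong}); the main task is to identify the Euler factors $\Lambda_{\tau,\congval,\modval}(p)$ explicitly from the local character sums $c_f^{\tau,\congval,\modval}(p^i)$ obtained by specializing \cref{lem:cstar2}.

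Concretely, I would begin from the Hurwitz class number formula
\begin{align*}
\H{\tau}{p}=\sum_{\substack{df^2=\tau^2-4p\\d\equiv 0,1\pmod 4}}\frac{\sqrt{\abs d}}{\pi}L(1,\chi_d),
\end{align*}
rewrite the weighted sum as
\begin{align*}
\sum_{\substack{p\leq x\\p\equiv \congval\pmod\modval}}\frac{\H{\tau}{p}}{p}
=\frac{2}{\pi}\sum_{\substack{f\leq 2\sqrt x\\(f,2\tau)=1}}\frac{1}{f}\sum_{\substack{p\in S_f(x)\\p\equiv \congval\pmod\modval}}\frac{L(1,\chi_d)}{\sqrt p},
\end{align*}
where $S_f(x)=\{\max(3,\tau^2/4)<p\leq x:df^2=\tau^2-4p\}$, and attack the inner sum via partial summation on
\begin{align*}
\sum_{\substack{f\leq V\\(f,2\tau)=1}}\frac{1}{f}\sum_{\substack{p\in S_f(x)\\p\equiv \congval\pmod\modval}}L(1,\chi_d)\log p.
\end{align*}
Then I would apply the standard truncation of $L(1,\chi_d)$ as in \cite{adj,dp} to replace the $L$-value by $\sum_{m\leq U\log U}\frac{e^{-m/U}}{m}\sum_{a\pmod{4m}}\kro{a}{m}\mathbf{1}_{d\equiv a}$, just as in the two-curve derivation, picking up the same type of error terms from primes with $p\mid m$, from $f>V$, and from the truncation parameter $U$.

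The main term will then reduce, after applying Barban--Davenport--Halberstam to control the resulting sum of $|E(x;[\modval,mf^2],\theta^*)|$ with the appropriate congruences
\begin{align*}
\frac{\tau^2-af^2}{4}\equiv\congval\pmod{(\modval,mf^2)},
\end{align*}
to
\begin{align*}
K_{\tau,\congval,\modval}\,x+O\!\la(\frac{x}{\log^c x}\ra),\qquad
K_{\tau,\congval,\modval}=\frac{1}{\phi(\modval)}\sum_{f,m\geq 1}\frac{c_f^{\tau,\congval,\modval}(m)\,\phi(\modval,mf^2)}{fm\,\phi(mf^2)},
\end{align*}
on choosing $U=x/(\modval\log^\alpha x)$ and $V=\log^\beta x$ with $\alpha-2\beta-1\geq 1$; the bound $|c_f^{\tau,\congval,\modval}(m)|\ll \modval m/\kappa(m)$ from \cref{lem:cstar2bound} guarantees absolute convergence of the series. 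Abel summation then converts the $x$ into $\pi_{1/2}(x)$, producing the stated main term with $C_{\tau,\congval,\modval}=\frac{4}{\pi}K_{\tau,\congval,\modval}$.

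Finally, \cref{lem:cstar2mult} (multiplicativity of $c_f^{\tau,\congval,\modval}(m)$) together with \cref{lem:euler} turns the sum into an Euler product over primes, and the hard part — which is essentially bookkeeping — is computing the local factor
\begin{align*}
\Lambda_{\tau,\congval,\modval}(p)=\sum_{i,k\geq 0}\frac{c_{p^k}^{\tau,\congval,\modval}(p^i)}{p^{i+k}\,\phi(p^{\max(v_p(\modval),i+2k)})}
\end{align*}
in each of the cases listed in the statement, using \cref{lem:cstar21eval} and \cref{lem:cstar2peval} (specialised to $j=\ell=0$). The main obstacle is the case analysis at ramified primes $p\mid\modval$, where one must split according to whether $v_p(\modval)\leq v_p(\rho_0)$ and the parity of $v_p(\rho_0)$: each subcase produces one of the six listed closed forms, and positivity of $C_{\tau,\congval,\modval}$ follows because each $\Lambda_{\tau,\congval,\modval}(p)>0$, since the relevant local Kronecker symbol contribution never cancels entirely.
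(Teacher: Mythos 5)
Your proposal is sound, and it is worth noting that the paper itself gives no proof of this proposition — it says only ``The one-curve case was handled in \cite{jcong}, we re-express the result below.'' So what you have written is, in effect, a re-derivation of the content of \cite{jcong}, obtained by specialising the paper's own two-curve argument to a single variable (equivalently, setting $g=n=1$, i.e.\ $\ell=j=0$, throughout \cref{lem:cstar2}). All of the structural identifications you make are the right ones: the class-number expansion of $\H{\tau}{p}$, the truncation of $L(1,\chi_d)$ by the smoothed character sum, the Barban--Davenport--Halberstam estimate for the resulting $E(x;[\modval,mf^2],\theta^*)$ terms, the BDH admissibility condition (which now reads $\frac{x}{\log^A x}\le \modval V^2 U\log U\le x$, hence $\alpha-2\beta-1\ge 1$ rather than $\alpha-4\beta-1\ge 1$), the representation
$$K_{\tau,\congval,\modval}=\sum_{f,m\ge 1}\frac{c_f^{\tau,\congval,\modval}(m)}{fm\,\phi([\modval,mf^2])},$$
and the Euler factorisation
$$\Lambda_{\tau,\congval,\modval}(p)=\sum_{i,k\ge 0}\frac{c_{p^k}^{\tau,\congval,\modval}(p^i)}{p^{i+k}\,\phi(p^{\max(v_p(\modval),i+2k)})}$$
via \cref{lem:cstar2mult} and \cref{lem:euler}. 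What your sketch does not do is actually carry out the geometric-series evaluations of the local factors from \cref{lem:cstar21eval} and \cref{lem:cstar2peval} in each of the seven cases; you label this ``essentially bookkeeping,'' which is fair, but a complete proof would need those computations spelled out since that is where the explicit constants in the statement come from. One further point worth flagging: the error term $O(x/\log^c x)$ printed in the proposition swamps the main term $\pi_{1/2}(x)\sim x^{1/2}/\log x$ and is surely a typo for $O(x^{1/2}/\log^c x)$ (compare \cref{prop:avgHcong}); your partial-summation step, correctly executed, produces exactly the latter.
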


			Note that both constants $C_{\tau,\congval,\modval}$ and $C_{\tau,\tau,\congval,\modval}$ depend significantly on the congruence class $\congval\mod\modval$, similar to what was found in \cite{w} for the $\tau=0$ case.
			This means that on average the set $\set{p:a_p(E)=\tau}$ is biased towards certain congruence classes.

\section{Example Argument Sets
\label{sec:ex}}	
	\subsection{Integers}

		Let 
		\begin{align*}
			\I(T)			&=\set{1,\dots,T}.
		\end{align*}
		We have
		\spanner{\begin{align*}
			\#\I(T)		    &=T,\\
			R_{\I(T),p}(w)	&=Tp^{-1}+O(1).
		\end{align*}}
		For $\S(T)=\I(T)$ we have the following corollaries of the previous sections.
		
		\begin{cor}\label{cor:I}
			For all $0\leq \delta\leq 1/2$ we have
			\begin{align*}
				\sum_{\substack{t\in \I(T)\\\Delta(t)\neq 0}}\pit{t}(\A;x)
						&\ll	\pw{
									T\frac{x^{1/2}}{\log x}+\frac{x^{3/2}}{\log x}			&		\A=\tau,\\
									T\frac{x^{1/4}}{\log x}+\frac{x^{5/4}}{\log x}			&		\A(p)=\pm \floor{2\sqrt{p}},\\
									T x^{\delta/2+1/4}\log_2  x+x^{\delta/2+5/4}\log_2  x		&		\A(p)=\pm 2\sqrt{p}+O\la(p^{\delta}\ra),\\
									T x^{1/2}\log_2  x+x^{3/2}\log_2  x					&		\oth.
							}
			\end{align*}
		\end{cor}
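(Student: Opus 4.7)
The plan is to derive Corollary \ref{cor:I} as an immediate specialization of Theorem \ref{prop:set}(i) to the family $\mathcal{S}(T) = \mathcal{I}(T) = \{1,\ldots,T\}$. Since Theorem \ref{prop:set}(i) is already stated for an abstract family $\mathcal{S}(T)$ satisfying a distributional hypothesis of the form $R_{\mathcal{S}(T),p}(w) \ll \#\mathcal{S}(T) p^{-1} + F(T)$, all that is required is to identify the correct values of $\#\mathcal{S}(T)$ and $F(T)$ in the integer case and then substitute.

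First I would verify the elementary count $\#\mathcal{I}(T) = T$. Next I would check the distributional hypothesis for $\mathcal{I}(T)$. Every element $t \in \mathcal{I}(T)$ may be written as $u/v$ with $v=1$, so the coprimality condition $(u,v)=1$ and the condition $p \nmid v$ hold automatically. Hence
\[
R_{\mathcal{I}(T),p}(w) = \#\{u \in \{1,\ldots,T\} : u \equiv w \pmod{p}\} = \frac{T}{p} + O(1),
\]
uniformly in $w$. This gives the hypothesis of Theorem \ref{prop:set}(i) with $\#\mathcal{S}(T) = T$ and $F(T) = 1$.

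With these values plugged into each of the four cases of Theorem \ref{prop:set}(i), one recovers exactly the four bounds asserted in Corollary \ref{cor:I}. For instance, the $\A=\tau$ case of the theorem reads $\#\mathcal{S}(T) x^{1/2}/\log x + F(T) x^{3/2}/\log x$, which under the substitution becomes $T x^{1/2}/\log x + x^{3/2}/\log x$; the other three cases are handled analogously.

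There is no real obstacle in this deduction; the only mildly delicate point is the observation that, because the elements of $\mathcal{I}(T)$ are integers, one has $v=1$ and the conditions $(u,v)=1$ and $p \nmid v$ are trivially satisfied, so the counting of $R_{\mathcal{I}(T),p}(w)$ reduces to a one-variable arithmetic progression count with the sharp remainder $O(1)$.
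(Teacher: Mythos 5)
Your proof is correct and is exactly the route the paper takes: Section \ref{sec:ex} records $\#\I(T)=T$ and $R_{\I(T),p}(w)=Tp^{-1}+O(1)$ and then states the corollary as an immediate specialization of Theorem \ref{prop:set}(i) with $F(T)=1$. Your verification of the distributional hypothesis (reducing to a one-variable arithmetic-progression count because $v=1$) is the same elementary observation the paper makes implicitly.
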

			In the general case \cite[Theorem 7]{ss} states
			$$\sum_{\substack{t\in \I(T)\\\Delta(t)\neq 0}}\pit{t}(\A;x)\ll T^2+T^{1/2}x^{5/4+o(1)}.$$
			We also have the trivial bound
			$$\sum_{\substack{t\in \I(T)\\\Delta(t)\neq 0}}\pit{t}(\A;x)\ll Tx.$$
			\cref{cor:I} is non-trivial and an improvement over \cite[Theorem 7]{ss} for $T\gg x^{1/2+\ep}$, $\ep>0$.
			For $T\ll x^{1/2}$ the trivial bound is better than \cref{cor:I} and \cite[Theorem 7]{ss}.

			For the extremal $\A$ case \cref{cor:I} is non-trivial and an improvement over \cite[Theorem 7]{ss} for $T\gg x^{1/4+\ep}$ for $\ep>0$.
			
		\begin{cor}\label{cor:I2}
			For all $0\leq \delta_1,\delta_2<1/2$ we have
			\begin{align*}
				&\sum_{\substack{t_1,t_2\in \I(T)\\\Delta_1(t_1)\neq 0\neq \Delta_2(t_2)}}\pitt{t_1}{t_2}(\A_1,\A_2;x)\\
				&\qquad\ll \pw{
							T^2\log_2  x+Tx+x^2															& 	\A_i=\tau_i,\\	
							T^2+Tx^{1/2}\log_2  x+x^{3/2}\log_2  x												&	\A_i(p)=\pm \floor{2\sqrt p},\\	
							 T^2+Tx^{\frac{1+\delta_1+\delta_2}{2}}\log x\log_2^2 x+x^{\frac{3+\delta_1+\delta_2}{2}}\log x\log_2^2 x	&	\A_i(p)=\pm 2\sqrt{p}+O\la(p^{\delta_i}\ra),\\
							 T^2\log^2 x\log_2^3 x+Tx \log x\log_2^2 x+x^2\log x\log_2^2 x								&	\oth.
						}
			\end{align*}
		\end{cor}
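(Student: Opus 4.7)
The plan is to apply Theorem \ref{prop:set}(ii) directly with $\S(T) = \I(T)$. First I would verify the distribution hypothesis: for any prime $p$ and any residue class $w$, the count of integers in $\{1,\dots,T\}$ congruent to $w \pmod p$ is $T/p + O(1)$, so we have $R_{\I(T),p}(w) = Tp^{-1} + O(1)$, giving $\#\S(T) = T$ and $F(T) = 1$ in the notation of the theorem. (These facts are already recorded at the top of Section~\ref{sec:ex}.)

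Next I would substitute $\#\S(T) = T$ and $F(T) = 1$ into each of the four cases of Theorem \ref{prop:set}(ii) and simplify, working case by case. For the constant-trace case $\A_i = \tau_i$, the bound $\#\S(T)^2 \log_2 x + \#\S(T) F(T) x + F(T)^2 x^2$ becomes $T^2 \log_2 x + Tx + x^2$. For the extremal case $\A_i(p) = \pm\lfloor 2\sqrt{p}\rfloor$, it becomes $T^2 + Tx^{1/2}\log_2 x + x^{3/2}\log_2 x$. For the near-extremal case $\A_i(p) = \pm 2\sqrt p + O(p^{\delta_i})$, one expands the bracket $[\#\S(T)F(T) + F(T)^2 x]$ into $T + x$, multiplies by $x^{(1+\delta_1+\delta_2)/2}\log x \log_2^2 x$ and adds the $T^2$ term, yielding the stated three-term expression. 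In the general (``otherwise'') case, one distributes the outer $\log x \log_2^2 x$ factor across the three inner summands.

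The substitution is routine and the only thing to watch is bookkeeping of logarithmic factors in the last two cases, along with recognizing that $F(T)^2 x \cdot x^{(1+\delta_1+\delta_2)/2} = x^{(3+\delta_1+\delta_2)/2}$ and similarly $F(T)^2 x^2 \cdot \log x \log_2^2 x = x^2 \log x \log_2^2 x$. There is no real obstacle here—the corollary is a direct specialization of the general theorem to the set of integers, and the proof amounts to little more than recording the identification of parameters and collecting terms. (One could equally well compare the single-parameter shape with \cite[Theorem 7]{ss} to see in which ranges of $T$ the new bound is an improvement, but that is exposition rather than proof.)
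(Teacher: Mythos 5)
Your proposal is correct and matches the paper's (implicit) proof exactly: Section~\ref{sec:ex} records $\#\I(T)=T$ and $R_{\I(T),p}(w)=Tp^{-1}+O(1)$, and \cref{cor:I2} is obtained by substituting $\#\S(T)=T$, $F(T)=1$ into each of the four cases of Theorem~\ref{prop:set}(ii) and distributing the bracketed factors. The substitution and term-collection in your last two cases are exactly the bookkeeping the paper performs.
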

			Also note that $W(T)=1+\log \max\I(T)=1+\log T$ so by \cref{lem:isogenous} the sum over pairs of isogenous curves is insignificant compared to the bounds in \cref{cor:I2}.

	\subsection{Farey Fractions}

		Let 
		\begin{align*}
			\cF(T)			&=\set{u/v\in \Q:(u,v)=1,1\leq u,v\leq T}.
		\end{align*}
		To bound $R_{\cF(T),p}(w)$ we need the following.
		\begin{lem}\label{lem:copran}
			For any $v\in \N$ and real $t_0<t_1$ we have
			\begin{align*}
				\sum_{\substack{t_0\leq k\leq t_1\\(v,k)=1}}1	&=\frac{t_1-t_0+1}{v}\phi(v)+O(d(v))
			\end{align*}
			\begin{proof}
				We have
				\begin{equation*}
					\sum_{\substack{t_0\leq k\leq t_1\\(v,k)=1}}1
													=\sum_{\substack{t_0\leq k\leq t_1\\b\mid(v,k)}}\mu(b)
													=\sum_{b\mid v}\mu(b)\la(\frac{t_1-t_0+1}{b}+O(1)\ra)
													=\frac{t_1-t_0+1}{v}\phi(v)+O(d(v)).\qedhere
				\end{equation*}
			\end{proof}
		\end{lem}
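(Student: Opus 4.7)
The plan is to apply Möbius inversion in the standard way to detect the coprimality condition $(v,k)=1$. Writing $[(v,k)=1] = \sum_{b \mid (v,k)} \mu(b)$, I swap the order of summation so that the outer sum runs over divisors $b$ of $v$, and the inner sum counts integers $k$ in $[t_0, t_1]$ that are divisible by $b$.

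Next, for each fixed $b \mid v$, I use the elementary estimate
\[
\sum_{\substack{t_0 \le k \le t_1 \\ b \mid k}} 1 = \frac{t_1 - t_0 + 1}{b} + O(1),
\]
where the error is uniform in $b$ (indeed bounded by $1$). Substituting this gives a main term
\[
(t_1 - t_0 + 1) \sum_{b \mid v} \frac{\mu(b)}{b} = \frac{t_1 - t_0 + 1}{v}\,\phi(v),
\]
using the standard identity $\phi(v)/v = \sum_{b \mid v} \mu(b)/b$.

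For the error, the contribution is $\sum_{b \mid v} |\mu(b)| \cdot O(1)$. Since $|\mu(b)|$ is supported on squarefree divisors, this is $O(2^{\omega(v)})$, which is dominated by $O(d(v))$. Assembling the two pieces gives the claimed formula. The only potentially delicate point is making sure the floor-induced $O(1)$ error is truly uniform in $b$, but since the count of multiples of $b$ in any real interval differs from its length divided by $b$ by at most $1$, this is immediate. No serious obstacle arises; the argument is a short, clean application of Möbius inversion.
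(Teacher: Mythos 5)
Your proof is correct and follows essentially the same approach as the paper: Möbius inversion to detect coprimality, swapping summation order, and the elementary interval-count estimate to separate main term and error. The only addition is your explicit remark that the error is $O(2^{\omega(v)}) \ll d(v)$, which the paper leaves implicit.
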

		Note that $\sum_{v\leq T}d(v)\ll T\log T$ and by \cite{mont}
		$$\sum_{v\leq T}\frac{\phi(v)}{v}=\frac{6}{\pi^2}T+O(\log^{2/3}T\log_2^{4/3} T).$$
		Therefore
		\begin{align*}
			\#\cF(T)		=\sum_{1\leq v\leq T}\sum_{\substack{1\leq u\leq T\\(u,v)=1}}1
						=\sum_{1\leq v\leq T}\la[\frac{T\phi(v)}{v}+O(d(v))\ra]
						=\frac{6}{\pi^2}T^2+O(T\log T),
		\end{align*}
		\begin{align*}
			R_{\cF(T),p}(w)	=\sum_{\substack{1\leq u,v\leq T\\(u,v)=1\\p\nmid v\\u\equiv wv\pmod p}}1					
						=\sum_{\substack{1\leq v\leq T\\p\nmid v}}\sum_{\substack{\frac{1-wv}{p}\leq k\leq \frac{T-wv}{p}\\(v,k)=1}}1
						&=\sum_{\substack{1\leq v\leq T\\p\nmid v}}\la[\frac{T\phi(v)}{pv}+O(d(v))\ra]\\
						&=\frac{6}{\pi^2}T^2p^{-1}+O\la(T\log T+T^2p^{-2}\ra)\\
						&=\#\cF(T)p^{-1}+O\la(T\log T+\#\cF(T)p^{-2}\ra).
		\end{align*}
		For the bounds in Section \ref{sec:basic} we get better results using
		\begin{align*}
			\#\cF(T)		&\ll T^2,\\
			R_{\cF(T),p}(w)	&=\sum_{\substack{1\leq v\leq T\\p\nmid v}}\sum_{\substack{1\leq u\leq T\\(u,v)=1\\u\equiv vw\pmod p}}1
						\leq \sum_{1\leq v\leq T}\la[Tp^{-1}+1\ra]
						= \frac{T^2}{p}+T.
		\end{align*}
		\begin{cor}\label{cor:F}
			For all $0\leq \delta_1,\delta_2\leq 1/2$ we have
			\begin{align*}
				\sum_{\substack{t\in \cF(T)\\\Delta(t)\neq 0}}\pit{t}(\A;x)
						&\ll	\pw{
									T^2\frac{x^{1/2}}{\log x}+T\frac{x^{3/2}}{\log x}			&		\A=\tau,\\
									T^2 \frac{x^{1/4}}{\log x}+T \frac{x^{5/4}}{\log x}		&		\A(p)=\pm \floor{2\sqrt{p}},\\
									T^2 x^{1/4+\delta/2}\log_2  x+T x^{5/4+\delta/2}\log_2  x	&		\A(p)=\pm 2\sqrt{p}+O\la(p^{\delta}\ra),\\
									T^2  x^{1/2}\log_2  x+T x^{3/2}\log_2  x				&		\oth.
							}
			\end{align*}
		\end{cor}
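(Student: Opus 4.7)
The plan is to derive \cref{cor:F} as a direct corollary of \cref{prop:set}(i) by supplying it with the size and distribution estimates for $\cF(T)$ that have just been established in the lines preceding the corollary statement.

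First I would recall the two ingredients already in hand: the crude but sharp size estimate $\#\cF(T)\ll T^{2}$, and the uniform bound
\[
R_{\cF(T),p}(w) \;\leq\; \frac{T^{2}}{p} + T,
\]
obtained by writing $R_{\cF(T),p}(w)=\sum_{v\le T,\,p\nmid v}\#\{u\le T:(u,v)=1,\,u\equiv vw\!\!\pmod p\}$ and using the trivial count $\#\{u\le T:u\equiv vw\!\!\pmod p\}\le T/p+1$ for each $v$. This is exactly the hypothesis $\norm{R_{\cF(T),p}}_\infty\ll \#\cF(T)p^{-1}+F(T)$ required by \cref{prop:set}(i), with the admissible choice $F(T)=T$.

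Next I would simply substitute $\#\S(T)\ll T^{2}$ and $F(T)\ll T$ into each of the four cases of \cref{prop:set}(i). In the case $\A=\tau$ this yields $T^{2}x^{1/2}/\log x+Tx^{3/2}/\log x$; in the extremal case $\A(p)=\pm\floor{2\sqrt p}$ it yields $T^{2}x^{1/4}/\log x+Tx^{5/4}/\log x$; in the near-extremal case $\A(p)=\pm 2\sqrt p+O(p^{\delta})$ it yields $T^{2}x^{1/4+\delta/2}\log_{2}x+Tx^{5/4+\delta/2}\log_{2}x$; and in the generic case it yields $T^{2}x^{1/2}\log_{2}x+Tx^{3/2}\log_{2}x$. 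These four estimates are exactly the four branches in the piecewise bound of \cref{cor:F}.

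There is no real obstacle here; the Farey-fraction case is a black-box application of the general theorem. The only point requiring any care is the choice of $F(T)$: one could attempt the slightly more refined estimate $R_{\cF(T),p}(w)=\#\cF(T)p^{-1}+O(T\log T+\#\cF(T)p^{-2})$ derived earlier via Mertens-type bounds, but this would yield $F(T)=T\log T$ and thus a strictly weaker conclusion, so I would deliberately use the cruder bound $F(T)=T$ for the purposes of this corollary.
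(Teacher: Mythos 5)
Your proposal is correct and matches the paper's own derivation: the paper explicitly switches to the crude bounds $\#\cF(T)\ll T^2$ and $R_{\cF(T),p}(w)\leq T^2/p + T$ precisely because these give $F(T)=T$ rather than the weaker $F(T)=T\log T$ from the Mertens-type asymptotic, and then applies \cref{prop:set}(i) directly. Your observation about deliberately preferring the cruder $F(T)$ is exactly the point the paper makes just before stating the corollary.
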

		\begin{cor}\label{cor:ssFLin}
			If $\deg j=1$ then
			$$\sum_{\substack{t\in \cF(T)\\\Delta(t)\neq 0}}\pit{t}(\tau;x)=\frac{6}{\pi^2}C_{\tau}T^2\pi_{1/2}(x)+O\la( T^2\frac{x^{1/2}}{\log^c x}+T\log T\frac{x^{3/2}}{\log x}\ra).$$
		\end{cor}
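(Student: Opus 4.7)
The plan is to deduce this from Theorem \ref{prop:setconstlin}(i) by taking $\S(T) = \cF(T)$ and reading off the appropriate value of $F(T)$. Since the hypothesis $\deg j = 1$ is inherited directly, the only work is verifying that Farey fractions are uniformly distributed modulo primes in the sense required by the theorem, and then substituting the asymptotic count of $\cF(T)$ into the main term.

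First, I would invoke the distribution estimates for $\cF(T)$ that were just established immediately before the corollary statement: from Lemma \ref{lem:copran} together with the Mertens-type estimate $\sum_{v \le T} \phi(v)/v = \frac{6}{\pi^2} T + O(\log^{2/3} T \log_2^{4/3} T)$, we obtained
\begin{align*}
\#\cF(T) &= \tfrac{6}{\pi^2} T^2 + O(T \log T),\\
R_{\cF(T),p}(w) &= \#\cF(T) p^{-1} + O\la(T \log T + \#\cF(T) p^{-2}\ra),
\end{align*}
uniformly in $w$. This is precisely the hypothesis of Theorem \ref{prop:setconstlin}(i) with $F(T) = T \log T$.

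Next, I would plug this into the conclusion of Theorem \ref{prop:setconstlin}(i): for every $c > 0$,
\begin{align*}
\sum_{\substack{t\in \cF(T)\\\Delta(t)\neq 0}}\pit{t}(\tau;x)
&= \#\cF(T)\, C_\tau\, \pi_{1/2}(x) + O\!\la(\#\cF(T)\frac{x^{1/2}}{\log^c x} + T\log T \cdot \frac{x^{3/2}}{\log x}\ra).
\end{align*}
Replacing $\#\cF(T) = \frac{6}{\pi^2} T^2 + O(T \log T)$ in the main term produces the claimed leading term $\frac{6}{\pi^2} C_\tau T^2 \pi_{1/2}(x)$, while the discrepancy $O(T\log T \cdot \pi_{1/2}(x))$ is trivially dominated by the $T\log T \cdot x^{3/2}/\log x$ error. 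Similarly, the term $O(\#\cF(T)\, x^{1/2}/\log^c x)$ becomes $O(T^2\, x^{1/2}/\log^c x)$.

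There is no real obstacle here beyond bookkeeping: the corollary is a direct specialisation of the general theorem to one concrete family, and the only non-trivial input—the Farey distribution estimate—has already been carried out in the paragraph immediately preceding the corollary. The short proof can simply cite Theorem \ref{prop:setconstlin}(i) and state the two substitutions $\#\cF(T) = \frac{6}{\pi^2} T^2 + O(T \log T)$ and $F(T) = T \log T$.
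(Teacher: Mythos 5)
Your proposal is correct and mirrors the paper's intended derivation exactly: the paper proves $\#\cF(T)=\frac{6}{\pi^2}T^2+O(T\log T)$ and $R_{\cF(T),p}(w)=\#\cF(T)p^{-1}+O\bigl(T\log T+\#\cF(T)p^{-2}\bigr)$ in the paragraph preceding the corollary, so $F(T)=T\log T$, and the corollary is the immediate specialisation of Theorem \ref{prop:setconstlin}(i) after substituting this count into the main term (with $O(T\log T\cdot\pi_{1/2}(x))$ absorbed into the $T\log T\cdot x^{3/2}/\log x$ error and $\#\cF(T)\ll T^2$ into the $T^2 x^{1/2}/\log^c x$ term). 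No gap; the reasoning and bookkeeping match the paper.
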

	
			\cite[Theorem 1]{ss} states
			$$\sum_{\substack{t\in \cF(T)\\\Delta(t)\neq 0}}\pit{t}(\A;x)\ll T^2x^{7/8}+Tx^{11/8+o(1)}.$$
			\cref{cor:F} is non-trivial and an improvement over both \cite[Theorem 1]{ss} for $T\gg x^{5/8+\ep}$, $\ep>0$.
			However, for $x^{3/8+\ep}\ll T\ll x^{5/8-\ep}$, $0<\ep<1/8$, best bound is \cite[Theorem 1]{ss} and for $T\ll x^{3/8}$ the trivial bound is best.

			For constant $\A=\tau$ case \cite[Equation (11)]{ss} gives the bound $T^2x^{3/4}+Tx^{3/2+o(1)}$
			\cref{cor:F} is non-trivial and an improvement over both \cite[Theorem 1]{ss} and \cite[Equation (11)]{ss} for $T\gg x^{3/4+\ep}$, $\ep>0$.
			For $x^{5/8+\ep}\ll T\ll x^{3/4-\ep}$ the best bound is \cite[Equation (11)]{ss}.
			For $x^{3/8+\ep}\ll T\ll x^{5/8-\ep}$ the best bound is \cite[Theorem 1]{ss}.
			For $T\ll x^{3/8}$ the trivial bound is best.

			For the $\A=0$ case \cite[Theorem 1]{ss} gives the bound $T^2x^{5/6}+Tx^{4/3+o(1)}$
			and \cite[Equation (11)]{ss} gives the bound $T^2x^{2/3}+Tx^{3/2+o(1)}$.
			\cref{cor:F} is non-trivial and an improvement over both \cite[Theorem 1]{ss} and \cite[Equation (11)]{ss} for $T\gg x^{5/6+\ep}$, $\ep>0$.
			For $x^{2/3+\ep}\ll T\ll x^{5/6-\ep}$ the best bound is \cite[Equation (11)]{ss}.
			For $x^{1/3+\ep}\ll T\ll x^{2/3-\ep}$ the best bound is \cite[Theorem 1]{ss}.
			For $T\ll x^{1/3}$ the trivial bound is best.

			For $j(Z)=Z$, \cite[Theorem 2]{ss} gives the bound $T^2x^{3/4+o(1)}+Tx^{5/4+o(1)}$.
			\cref{cor:F} is non-trivial and an improvement over both \cite[Theorem 2]{ss} for $T\gg x^{3/4+\ep}$, $\ep>0$.
			For $x^{1/4+\ep}\ll T\ll x^{3/4-\ep}$ the best bound is \cite[Theorem 1]{ss}.
			For $T\ll x^{1/4}$ the trivial bound is best.

			For the extremal $\A$ case \cref{cor:F} is non-trivial and an improvement over \cite[Theorem 1]{ss} for $T\gg x^{1/4+\ep}$ for $\ep>0$.

	\subsection{Multiset of Sums of Integers}
		\newcommand{\multiplus}{+}
		Note that all of the above propositions are compatible with $\S(T)$ being a finite multiset.
		More specifically, for any multiset $\mathcal A$ with finite support set $A$ and multiplicity function $m_{\mathcal A}:A\to \N$, and any function $f:A\to \R$ we define
		$$\sum_{a\in \mathcal A}f(a):=\sum_{a\in A}m_{\mathcal A}(a)f(a).$$
		Its cardinality is defined as
		$$\#\mathcal A:=\sum_{a\in A}m_{\mathcal A}(a).$$
		For any sets $A,B\subset \Q$, define the multi-sumset $A\multiplus B$ with support the standard sumset $\set{a+b:a\in A,b\in B}$ and multiplicity function given by
		$$m_{A\multiplus B}(c)=\#\set{(a,b)\in A\times B:a+b=c}.$$
		The above definitions are set up so that
		\begin{align*}
			\sum_{c\in A\multiplus B}f(c)	&=\sum_{\substack{a\in A\\b\in B}}f(a+b).
		\end{align*}

		\begin{cor}\label{cor:ssUVImp}	
			Let $\U(T),\V(T)\subset \I(T)$.		
			If
			\begin{align*}
				R_{\U(T),p}(w)	&=\#\U(T)p^{-1}+O(1),\\
				R_{\V(T),p}(w)	&=\#\V(T)p^{-1}+O(1)
			\end{align*}
			then
			$$\sum_{\substack{t\in \U(T)\multiplus \V(T)\\\Delta(t)\neq 0}}\pit{t}(\A;x)\ll \#\U(T)\#\V(T)x^{1/2}\log_2  x+\min(\#\U(T),\#\V(T))x^{3/2}\log_2  x.$$
			\begin{proof}
				We have $\#\U(T)\multiplus \V(T)=\#\U(T)\#\V(T)$ and
				\begin{align*}
					R_{\U(T)\multiplus\V(T),p}(w)		=\sum_{\substack{t\in \U(T)\multiplus\V(T)\\t\equiv w\pmod p}}1
										=\sum_{\substack{r\in \U(T)\\s\in \V(T)\\r+s\equiv w\pmod p}}1
										&=\sum_{r\in \U(T)}(\#\V(T)p^{-1}+O(1))\\
										&=\#\U(T)\#\V(T)p^{-1}+O(\#\U(T)),
				\end{align*}	
				and by symmetry $R_{\U(T)\multiplus\V(T),p}(w)=\#\U(T)\#\V(T)p^{-1}+O(\#\V(T))$ so
				$$R_{\U(T)\multiplus\V(T),p}(w)=\#\U(T)\#\V(T)p^{-1}+O(\min(\#\U(T),\#\V(T))).$$
				The result then follows from \cref{prop:set}(i).
			\end{proof}
		\end{cor}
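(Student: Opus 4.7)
The plan is to reduce the corollary to \cref{prop:set}(i) in the ``otherwise'' regime by computing, for the multiset $\S(T) := \U(T) \multiplus \V(T)$, both its cardinality and its distribution modulo $p$. Cardinality is immediate from the definition: since each pair $(r,s) \in \U(T) \times \V(T)$ contributes one element counted with multiplicity, $\#\S(T) = \#\U(T) \cdot \#\V(T)$. Moreover, since $\U(T), \V(T) \subset \I(T)$ are sets of integers, every $t \in \S(T)$ has trivial denominator $v = 1$, so the coprimality and $p \nmid v$ conditions in the definition of $R_{\S(T),p}(w)$ are automatically satisfied.

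For the uniform bound on $R_{\S(T),p}(w)$, I would condition on the $\U$-component. Writing $t = r + s$ with $r \in \U(T)$ and $s \in \V(T)$, the congruence $t \equiv w \pmod p$ becomes $s \equiv w - r \pmod p$, so
$$R_{\S(T),p}(w) = \sum_{r \in \U(T)} R_{\V(T),p}(w - r).$$
Applying the hypothesis $R_{\V(T),p}(\cdot) = \#\V(T) p^{-1} + O(1)$ uniformly in the argument, this sum equals $\#\U(T)\#\V(T)p^{-1} + O(\#\U(T))$. The symmetric expansion $\sum_{s \in \V(T)} R_{\U(T),p}(w-s)$ yields the same main term with error $O(\#\V(T))$, and taking the better of the two gives $R_{\S(T),p}(w) \ll \#\U(T)\#\V(T)p^{-1} + \min(\#\U(T), \#\V(T))$.

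With this in hand, \cref{prop:set}(i) applies in the ``otherwise'' case with $\#\S(T) = \#\U(T)\#\V(T)$ and $F(T) = \min(\#\U(T), \#\V(T))$, and the bound $\#\S(T)x^{1/2}\log_2 x + F(T)x^{3/2}\log_2 x$ is precisely the claim. There is no serious obstacle here; the only subtle point is noticing that the convolution structure allows one to perform the expansion in either order, which is what converts the asymmetric $O(\#\U(T))$ coming from a single expansion into the symmetric $O(\min(\#\U(T), \#\V(T)))$ that gives the sharper final bound when $\U(T)$ and $\V(T)$ have very different sizes.
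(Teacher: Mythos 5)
Your argument is essentially identical to the paper's: compute $\#\S(T) = \#\U(T)\#\V(T)$, expand $R_{\U(T)\multiplus\V(T),p}(w)$ as a convolution over $\U(T)$ to get error $O(\#\U(T))$, invoke symmetry to get $O(\#\V(T))$ and hence $O(\min(\#\U(T),\#\V(T)))$, then apply Theorem~\ref{prop:set}(i) in the ``otherwise'' case. The only cosmetic addition is your remark that elements of $\U(T)\multiplus\V(T)$ have trivial denominator, which is implicit in the paper.
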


			\cite[Theorem 8]{ss} states
			$$\sum_{\substack{t\in \U(T)\multiplus \V(T)\\\Delta(t)\neq 0}}\pit{t}(\A;x)\ll T\#\U(T)\#\V(T)+(\#\U(T)\#\V(T))^{3/4}x^{5/4}.$$
			Note however that \cite[Theorem 8]{ss} does not put any restrictions on $R_{\U(T),p}(w)$ and $R_{\V(T),p}(w)$ as we have done.
			\cref{cor:ssUVImp} is non-trivial and an improvement over \cite[Theorem 8]{ss} for $\max(\#\U(T),\#\V(T))\gg x^{1/2+\ep}$, $\ep>0$.
			For $\#\U(T),\#\V(T)\ll x^{1/2}$ the trivial bound is best.

	\subsection{Multiset of Sums of Farey Fractions}
		If $\frac{u}{v}=\frac{u_1}{v_1}+\frac{u_2}{v_2}$ where $(u,v)=(u_1,v_1)=(u_2,v_2)=1$ and $p\nmid v$ then either $p\nmid v_1v_2$ or $p\mid (v_1,v_2)$.
		Note that there are $O\la(T^4p^{-2}\ra)$ elements $\frac{u_1}{v_1}+\frac{u_2}{v_2}\in \cF(T)\multiplus \cF(T)$ (counted with multiplicity) such that $p\mid (v_1,v_2)$.
		We have $\#\cF(T)\multiplus \cF(T)\ll T^4$ and
		\begin{align*}
			R_{\cF(T)\multiplus \cF(T),p}(w)	=\sum_{\substack{\frac{u}{v}\in \cF(T)\multiplus \cF(T)\\p\nmid v\\u\equiv vw\pmod p}}1
									&\ll T^4p^{-2}+\sum_{\substack{\frac{u_1}{v_1},\frac{u_2}{v_2}\in \cF(T)\\p\nmid v_1v_2\\u_1v_1\inv+u_2v_2\inv\equiv w\pmod p}}1\\
									&\ll T^4p^{-2}+\sum_{\substack{\frac{u_1}{v_1}\in \cF(T)\\p\nmid v_1}}\la[T^2p^{-1}+T\ra]\\
									&\ll T^4p^{-1}+T^3.
		\end{align*}
		\begin{cor}\label{cor:FF}
			We have
			\begin{equation*}
				\sum_{\substack{t\in \cF(T)\multiplus \cF(T)\\\Delta(t)\neq 0}}\pit{t}(\A;x)	\ll 	\pw{
																		T^4\frac{x^{1/4}}{\log x}+T^3\frac{x^{5/4}}{\log x}			&\A(p)=\pm\floor{2\sqrt p},\\
																		T^4x^{1/2}\log_2  x+T^{3}x^{3/2}\log_2  x				& \oth.
																}
			\end{equation*}
		\end{cor}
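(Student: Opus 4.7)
The plan is essentially a direct application of Theorem \ref{prop:set}(i), since all the ingredients have already been computed in the paragraph immediately preceding the corollary.

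First, I would note that we have already established the two key inputs, namely the cardinality bound $\#(\cF(T)\multiplus\cF(T)) \ll T^4$ and the equidistribution bound
\[
R_{\cF(T)\multiplus\cF(T),p}(w) \ll T^4 p^{-1} + T^3,
\]
uniformly in $w$. Thus the hypothesis of Theorem \ref{prop:set}(i) is satisfied with $\#\S(T) \ll T^4$ and $F(T) = T^3$.

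Next, I would simply substitute these quantities into the appropriate cases of Theorem \ref{prop:set}(i). For $\A(p) = \pm\floor{2\sqrt p}$, the second branch of the piecewise bound gives
\[
\sum_{\substack{t\in \cF(T)\multiplus \cF(T)\\\Delta(t)\neq 0}}\pit{t}(\A;x) \ll T^4 \frac{x^{1/4}}{\log x} + T^3 \frac{x^{5/4}}{\log x}.
\]
In the general (``otherwise'') case, the fourth branch gives
\[
\sum_{\substack{t\in \cF(T)\multiplus \cF(T)\\\Delta(t)\neq 0}}\pit{t}(\A;x) \ll T^4 x^{1/2} \log_2 x + T^3 x^{3/2} \log_2 x,
\]
which matches the stated bounds.

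There is essentially no obstacle here; the only verification step is to confirm that the error term $T^3$ in the equidistribution estimate is an admissible $F(T)$ for Theorem \ref{prop:set}(i), which it is since the theorem only requires an upper bound of the shape $\#\S(T)p^{-1} + F(T)$. The cases $\A = \tau$ and $\A(p) = \pm 2\sqrt p + O(p^{\delta})$ from Theorem \ref{prop:set}(i) are omitted from the statement simply because they can be deduced analogously and are not needed for the subsequent comparisons with \cite{ss}.
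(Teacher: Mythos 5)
Your proposal is correct and follows exactly the paper's approach: it applies Theorem~\ref{prop:set}(i) with $\#\S(T)\ll T^4$ and $F(T)=T^3$, both of which are established in the computation immediately preceding the corollary. The observation that only the upper-bound form $R_{\S(T),p}(w)\ll \#\S(T)p^{-1}+F(T)$ is required is also the right justification.
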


			\cite[Theorem 3]{ss} states that
			$$\sum_{\substack{t\in \cF(T)\multiplus \cF(T)\\\Delta(t)\neq 0}}\pit{t}(\A;x)	\ll 	T^5+T^4x^{3/4+o(1)}+T^3x^{5/4+o(1)}.$$
			\cref{cor:FF} is non-trivial and an improvement over \cite[Theorem 3]{ss} for $T\gg x^{3/4+\ep}$, $\ep>0$.
			For $x^{1/4+\ep}\ll T\ll x^{3/4-\ep}$ the best bound is \cite[Theorem 3]{ss}.
			For $T\ll x^{1/4}$ the trivial bound is best.

			For the extremal $\A$ case \cref{cor:FF} is non-trivial and an improvement over \cite[Theorem 3]{ss} for $T\gg x^{1/2+\ep}$, $\ep>0$.
			For $x^{1/4+\ep}\ll T\ll x^{1/2-\ep}$ the best bound is \cite[Theorem 3]{ss}.
			For $T\ll x^{1/4}$ the trivial bound is best.

\end{document}